\newtheorem{thm}{Theorem}
\newtheorem{lemma}{Lemma}
\newtheorem{prop}{Proposition}
\newtheorem{conj}{Conjecture}
\newtheorem{defn}{Definition}
\newtheorem{remark}{Remark}
\newtheorem{asmp}{\textbf{Assumption}}
\newcommand{\reals}{\mathbb{R}}
\newcommand{\defeq}{\stackrel{\Delta}{=}}
\newcommand{\norm}[1]{\left \lVert#1\right \rVert}
\newcommand{\snorm}[1]{\left \lVert#1\right \rVert^{2}}
\newcommand{\set}[1]{\{#1\}}
\newcommand{\setc}[2]{\{#1\ |\ #2\}}
\newcommand{\orth}{\perp}
\newcommand{\orthc}{||}
\newcommand{\Id}{\mathrm{Id}}
\newcommand{\bracket}[1]{\left( #1 \right)}
\DeclareMathOperator{\diag}{diag}
\DeclarePairedDelimiterX{\inp}[2]{\langle}{\rangle}{#1\ \middle| \ #2}
\DeclarePairedDelimiterX{\inpl}[2]{\langle}{.}{#1\ \middle| \ #2}
\DeclarePairedDelimiterX{\inpr}[2]{.}{\rangle}{#1\ \ #2}
\tikzstyle{block} = [draw, fill=white, rectangle, 
\tikzstyle{sum} = [draw, fill=white, circle, node distance=0.5cm, inner sep=0pt, minimum size=0.25cm]
\tikzstyle{input} = [coordinate]
\tikzstyle{output} = [coordinate]
\tikzstyle{pinstyle} = [pin edge={to-,thin,black}]
\tikzstyle{branch}=[fill,shape=circle,minimum size=3pt,inner sep=0pt]
\newtheorem{exmp}{Example}
\title{On the exact convergence to Nash equilibrium in hypomonotone regimes under full and partial-information}
\author{Dian Gadjov and Lacra Pavel
\thanks{This work was supported by an NSERC Discovery Grant.}%
\thanks{\footnotesize D. Gadjov and L. Pavel are with Dept. of Electrical and Computer Engineering, University of Toronto, 
        {\tt\footnotesize dian.gadjov@mail.utoronto.ca},
        {\tt\footnotesize pavel@control.utoronto.ca}}%
}
\begin{document}

\maketitle
\thispagestyle{empty}
\pagestyle{empty}

\begin{abstract}
In this paper, we consider distributed Nash equilibrium seeking in monotone and hypomonotone games. We first assume that each player has knowledge of the opponents' decisions and propose a passivity-based modification of the standard gradient-play dynamics, that we call ``Heavy Anchor''. We prove that Heavy Anchor allows a relaxation of strict monotonicity of the pseudo-gradient, needed for gradient-play dynamics, and can ensure exact asymptotic convergence in merely monotone regimes. We extend these results to the setting where each player has only partial information of the opponents' decisions. Each player maintains a local decision variable and an auxiliary state estimate, and communicates with their neighbours to learn the opponents' actions. We modify Heavy Anchor via a distributed Laplacian feedback and show how we can exploit equilibrium-independent passivity properties to achieve convergence to a Nash equilibrium in hypomonotone regimes.
\end{abstract}

\section{Introduction}

Recent years have seen a flurry of research papers on distributed Nash equilibrium seeking, due to the increase of distributed systems. There are a broad range of networked scenarios that involve strategic interacting agents, where centralized approaches are not suitable. Some examples are demand-side management for smart grids, \cite{Basar2012}, electric vehicles, \cite{PEVParise}, competitive markets, \cite{LiDahleh}, network congestion control, \cite{Garcia}, power control and resource sharing in wireless/wired peer-to-peer networks, cognitive radio systems, \cite{Scutari_2014}. 

Classically, Nash Equilibrium (NE) seeking algorithms assume that each player has knowledge of every other agent's decision/action, the so called \textit{full-decision information} setting. In this setting, there are many well known algorithms that find the NE under various assumptions \cite{BasarLi1987}, \cite{FP07}, \cite{monoBookv2}. In a slightly more general setting, some algorithms require a centralized coordinator that broadcasts data to the network of agents, \cite{gramDR}. In recent years, a collective effort have been made to generalize these results to the \textit{partial-decision information} setting, where a centralized coordinator does not exists. Without a coordinator agents only have partial knowledge of the other agent's action, but may communicate locally with neighboring agents. A variety of NE seeking algorithms, for the partial-decision information setting, have been proposed, e.g. \cite{NedichDistAgg}-\cite{dianAgg}. However, all these results require \textit{strict/strong monotonicity} of the pseudo-gradient. Unfortunately, there are prominent classes of games that do not satisfy this assumption, e.g. in zero-sum games, saddle-point problems, Cournot games,  \cite{ShanbhagTikhonov}, or in resource allocation games,  \cite{Scutari_2014}. 
 
Some existing NE seeking methods are applicable for games with a \textit{merely monotone} pseudo-gradient, but only in \textit{full-decision information} settings, e.g. \cite{FP07}, \cite{monoBookv2}, \cite{frb}. However, these methods typically require more complex computations, such as the forward-backward-forward algorithm, \cite{gramTseng} \cite{monoBookv2}, Tikhonov proximal-point algorithm in \cite{ShanbhagTikhonov}, inexact proximal best-response in \cite{Scutari_2014}, \cite{YiTCNS},  or proximal-point/resolvent computation (e.g.  Douglas-Rachford splitting), \cite{monoBookv2}. Even though proximal-point algorithms or Douglas-Rachford splitting can achieve exact convergence to a NE, they are computationally expensive since each step involves solving an optimization problem. These methods are only applicable in games with easily computed prox (resolvent) operators, \cite{FP07}, \cite{monoBookv2}. Regularization methods, such as the Tikhonov regularization \cite{ShanbhagTikhonov}, or continuous-time mirror-descent dynamics, \cite{boMirror}, are simpler, but require diminishing step-sizes (very slow convergence), or ensure convergence to only an approximate NE. We emphasize that all these existing methods for monotone games, assume agents have perfect knowledge of the actions of the other agents. Additionally, none of these methods deal with hypomonotone games for either the full or partial information setting.

An extremum seeking method \cite{extreme_krstic} for continuous time  monotone games has been proposed. However, this method only converges to an $\epsilon$ neighborhood of the NE. A payoff based method \cite{bandit} for discrete time is recently proposed to find the NE in monotone games. Agents random perturbation their action and moves in the direction of improvement. The random nature of the algorithm with the diminishing step sizes result in a slow method to converge to the NE but at the benefit of just using payoff information.

\textit{Contributions. }
Recognizing the lack of results for (hypo)monotone games, in this paper, we consider NE seeking for games with a \textit{monotone} or \textit{hypomonotone} pseudo-gradient. We propose an algorithm we call ``Heavy Anchor'', constructed by a passivity-based modification of the standard gradient-play dynamics. We demonstrate that in the full-decision information setting, Heavy Anchor ensures exact convergence to a NE for any positive parameter values. Additionally, we show that under a carefully chosen change of coordinates, and conditions on the parameters, Heavy Anchor converges in hypomonotone games. Furthermore, we extend the result to the partial-decision information setting, by using a distributed Laplacian feedback. More specifically, we prove convergence for monotone extended pseudo-gradient, or (hypo)monotone and inverse Lipschitz pseudo-gradient. To the best of our knowledge these are the first such results in the literature. Lastly, we look at quadratic games, an important subclass of games, and derive tighter conditions for the full information (hypo)monotone setting and the partial information (hypo)monotone setting.

Heavy Anchor can be interpreted as modifying the standard gradient method with a term approximating the derivative of the agent's \emph{own} action as predictive term. We use the approximation as a frictional force to improve stability. Heavy Anchor is similar to \cite{antipin}, \cite{shamma} used in saddle-point problems in the full information setting. However, \cite{antipin}, \cite{shamma}, approximate the derivative of the \emph{other} agents' actions. Furthermore, our convergence results are global, unlike the local results in \cite{shamma}. 

In the physics literature, similar dynamics were investigated for stabilizing unknown equilibrium in chaotic systems \cite{physical2}, \cite{physical3}. However, they do not provide a rigorous characterization describing when the equilibrium is stabilized. Finally, Heavy Anchor is also related to second-order dynamics used in the optimization literature, e.g. \cite{attouch1}. If we discretize Heavy Anchor and restrict the parameter values, we can recover the optimistic gradient-descent/ascent (OGDA) \cite{optimistic} or the shadow Douglas Rachford \cite{shadow}. However, all these optimization methods assume that the map is the gradient of a convex function. This does not hold in a game typically -  the game map  a pseudo-gradient rather than a full gradient,  and thus convergence results are not applicable in a game context. Moreover, all these results are for the full information setting. 

In \cite{dianMono} we presented the algorithm and proved convergence for monotone games. In this paper, we extend our results to hypomonotone games and provide additional analysis of inverse Lipschitz operators. Furthermore, we derive tighter conditions for the class of quadratic games, which were not analyzed in \cite{dianMono}.

The paper is organized as follows. Section \ref{sec:background} gives preliminary background. Section \ref{sec:formulation} formulates the problem, standing assumptions and introduces the NE seeking algorithm for the full information case. The convergence analysis is presented in Section \ref{sec:alg}. Section \ref{sec:dist} presents the partial-information version of the algorithm. Section \ref{sec:invLip} investigates a property we are calling ``inverse Lipschitz'', critical in our analysis of the partial information setting and hypomonotone games. Section \ref{sec:partialConv} proves convergence for the partial information setting. Section \ref{sec:Quad} derives tighter conditions for the class of quadratic games. Section \ref{sec:sim} shows simulations of our proposed algorithm and concluding remarks are given in Section \ref{sec:conclusion}.

\emph{Notations}. For $x\in\reals^{n}$, $x^{T}$ denotes its transpose and $\norm{x} = \sqrt{\inp*{x}{x}} = \sqrt{x^{T}x}$ the norm induced by inner product $\inp*{\cdot}{\cdot}$. For a matrix $A\in\reals^{n\times n}$,  $\lambda (A) = \set{\lambda_{1},\dots,\lambda_{n}}$ and $\sigma (A) = \set{\sigma_{1},\dots,\sigma_{n}}$ denotes its eigenvalue and singular value set, respectively. Given $A, B\in \reals^{n\times n}$, let $A \succeq B$ denote that $(A-B)$ is positive semidefinite.
 
For $\mathcal{N}  = \set{1, \dots, N}$, $col(x_{i})_{i\in \mathcal{N}} = [x_{1}^{T},\dots,x_{N}^{T}]^{T}$ denotes the stacked vector of $x_{i}$, while $diag(x_{i})_{i\in\mathcal{N}}$ is the diagonal matrix with $x_{i}$ along the diagonal. $I_{n}$, $\mathbf{1}_{n}$ and $\mathbf{0}_{n}$ denote the identity matrix, the all-ones and the all-zeros vector of dimension $n$,  and  $\otimes$  denotes the Kronecker product. Lastly, we denote $\mathfrak{j} = \sqrt{-1}$.

\section{Background} \label{sec:background}

\subsection{Monotone Operators}
	The following are from \cite{monoBookv2}. Let $T : \mathcal{H}  \to 2^{\mathcal{H}}$ be  an operator, where $\mathcal{H}$ is a Hilbert space. Its graph is denoted by $graT = \setc{(x,y)\in \mathcal{H}\times \mathcal{H}}{y \in Tx}$.
	An operator $T$ is $\mu$-strongly monotone and monotone, respectively, if it satisfies, $\inp*{Tx - Ty}{x-y} \geq \mu \snorm{x-y}$ $\forall x,y \in \mathcal{H}$, where $\mu >0$ and $\mu = 0$, respectively. Additionally, we say an operator $T$ is $\mu$-hypomonotone if $\inp*{Tx - Ty}{x-y} \geq -\mu \snorm{x-y}$ $\forall x,y \in \mathcal{H}$, where $\mu \geq 0$.
	 $T$ is maximally monotone if $\forall (x,y)\in \mathcal{H}\times \mathcal{H}$, $(x,y)\in graT  \iff  (\forall (u,v)\in graT)\inp*{x-u}{y-v} \geq 0$. The resolvent of a monotone operator $T$ is denoted by $\mathcal{J}_{\lambda T} = \bracket{\text{Id} + \lambda T}^{-1}$, $\lambda>0$, where $\text{Id}$ is the identity operator. Fixed points of $\mathcal{J}_{\lambda T} $ are identical to zeros of $T$ (Prop. 23.2,  \cite{monoBookv2}). 
An operator $T$ is $L$-Lipschitz if, $\norm{Tx- Ty} \leq L \norm{x-y}$ $\forall x,y \in \mathcal{H}$. An operator $T$ is $C$-cocoercive ($C$-inverse strongly monotone) if	$\inp*{Tx - Ty}{x-y} \geq C \snorm{Tx-Ty}$ $\forall x,y \in \mathcal{H}$.

\subsection{Equilibrium Independent Passivity}
The following are from \cite{EIP}. Consider a system,
\begin{align} \label{eqn:generalDynSys}
\begin{split}
	\dot{x} &= f(x,u) \\
	y &= h(x,u)
\end{split}
\end{align}
with $x\in \reals^n$,  $u \in \reals^q$ and $y \in \reals^q$, $f$ locally Lipschitz and $h$ continuous. For a differentiable  function $V:\reals^n \rightarrow \reals$, the time derivative of $V$ along solutions of \eqref{eqn:generalDynSys} is denoted by $\dot{V}(x) = \nabla^T V(x) \, f(x,u)$ or just $\dot{V}$. Let  $\overline{u}$, $\overline{x}$, $\overline{y}$ be  an equilibrium condition, such that $0=f(\overline{x},\overline{u})$, $\overline{y}=h(\overline{x},\overline{u})$. 
Equilibrium independent passivity (EIP) requires a system to be passive independent of the equilibrium point. 
\begin{defn}\label{def:EIP}
System \eqref{eqn:generalDynSys} is Equilibrium Independent Passive (EIP) if it is passive with respect to $\overline{u}$ and $\overline{y}$; that is for every $ \overline{u}  \in \overline{U}$ there exists a differentiable, positive semi-definite storage function $V: \reals^n \to \reals$ such that $V(\overline{x}) = 0$ and  $\forall u \in \reals^q$, $x \in \reals^n$, $\dot{V}(x)  \leq   \inp*{y-\overline{y}}{u-\overline{u}}$. The system is Output-strictly EIP if, $\dot{V}  \leq  \inp*{y-\bar{y}}{u - \bar{u}} - \rho \snorm{y-\bar{y}}$ where $\rho > 0$. 
\end{defn}

\subsection{Graph Theory}
Let the graph $G = (\mathcal{N}, \mathcal{E})$ describe the information exchange among a set $\mathcal{N}$ of agents, where $\mathcal{E} \subset \mathcal{N} \times \mathcal{N}$. 
If agent $i$ can get information from agent $j$, then $(j, i) \in \mathcal{E}$ and agent $j$ is in agent $i$'s neighbour set $\mathcal{N}_{i} = \setc{j}{(j, i)\in \mathcal{E}}$. $G$ is undirected when $(i, j) \in  \mathcal{E}$ if and only if $(j, i)  \in  \mathcal{E}$. $G$ is connected if there is a path between any two nodes. Let $W = [w_{ij}] \in  \reals^{N\times N}$ be the weighted adjacency matrix, with $w_{ij} > 0$ if $j \in  \mathcal{N}_{i}$ and $w_{ij} = 0$ otherwise. Let $Deg = \diag(d_{i})_{i\in\mathcal{N}}$, where $d_{i} = \sum_{j=1}^{N} w_{ij}$. Assume that $W = W^{T}$ so the weighted Laplacian of $G$ is $L = Deg - W$. When $G$ is connected and undirected, $0$ is a simple eigenvalue of $L$, $L\mathbf{1}_N = \mathbf{0}$, $\mathbf{1}_N^{T}L = \mathbf{0}^{T}$, and all other eigenvalues are positive, 
$0 < \lambda_{2}(L) \leq \dots  \leq  \lambda_{N}(L)$.

\section{Problem Setup}\label{sec:formulation}

Consider a set $\mathcal{N}=\{ 1,\dots,N\}$ of $N$ players (agents) involved in a game. Each player $i \in \mathcal{N}$ controls its action or decision $x_i \in \Omega_i \subseteq  \reals^{n_i}$. The action set of all players is the Cartesian product $\Omega = \prod_{i\in\mathcal{N}}\Omega_i \subseteq \reals^{n}$, $n = \sum_{i\in\mathcal{N}} n_i$. Let $x=(x_i,x_{-i})\in  {\Omega}$ denote all agents' action profile or $N$-tuple,  where  $x_{-i}$ 
is the $(N-1)$-tuple of all agents' actions except agent $i$'s. Alternatively, $x$ is represented as a stacked vector $x = [x_1^T \dots x_N^T]^T  \in \Omega \subseteq  \reals^n$. Each player (agent) $i$ aims to minimize its  own cost function $J_i(x_i,x_{-i})$, $J_i : \Omega \to \reals$, which depends on possibly all other players' actions. 
 Let the game thus defined be denoted by $\mathcal{G}(\mathcal{N},J_i,\Omega_i)$.
\begin{defn}\label{defNE}
	Given a game $\mathcal{G}(\mathcal{N},J_i,\Omega_i)$, an action profile $x^* =(x_i^*,x_{-i}^*)\in \Omega$ is  a Nash Equilibrium (NE) of $\mathcal{G}$  if
	\begin{align*}
		(\forall i \in \mathcal{N})(\forall y_i \in \Omega_i) \quad J_i(x_i^*,x_{-i}^*) \leq J_i(y_i,x_{-i}^*)
	\end{align*}
and therefore no agent has the incentive to unilaterally deviate from their action.
\end{defn}

Alternatively, if $J_{i}$ is differentiable then a NE $x^* \in  \Omega$ satisfies the variational inequality (VI) (Proposition 1.4.2, \cite{FP07}),
\begin{align} \label{eq:ViNash}
	(x-x^*)^TF(x^*)\geq 0 \quad \forall x\in {\Omega}
\end{align}
where $F : \Omega  \to  \reals^n$ is the \emph{pseudo-gradient (game) map} defined by stacking all agents'  partial gradients, 
\begin{align}\label{eq:expPsuedoGrad_F}
F(x) = [\nabla_{x_1} J^T_1(x),\dotsc,\nabla_{x_N} J^T_N(x)]^T
\end{align}
with $\nabla_{x_i} J_i(x_i,x_{-i}) =\frac{\partial J_i}{\partial x_i}(x_i,x_{-i})\in  \reals^{n_i}$, the partial-gradient of $J_i(x_i,x_{-i})$ with respect to its own action $x_i$.

We use the following basic convexity and smoothness assumption, which ensures the existence of a NE.

\begin{asmp} \label{asmp:Jsmooth}
For every $i\in\mathcal{N}$, $\Omega_i=\reals^{n_i}$ and the cost function  $J_i:\Omega  \to  \reals$  is $\mathcal{C}^1$ in its arguments, convex and radially unbounded in $x_i$,  for every $x_{-i}\in {\Omega}_{-i}$.
\end{asmp}
Under Assumption \ref{asmp:Jsmooth} from Corollary 4.2 in  \cite{B99} it follows that a NE $x^*$ exists. Furthermore, the VI \eqref{eq:ViNash} reduces to $ F(x^{*})=0$.

A standard method for reaching a Nash Equilibrium (NE) is using gradient-play dynamics \cite{flam}, i.e., 
\begin{align} \label{eqn:gradientDescent}
 \dot{x}_{i} &= -\nabla_{x_i}J_{i}(x_{i},x_{-i}), \forall i\in \mathcal{N}, \quad \text{or} \quad 	\dot{x} = -F(x)
\end{align}
This algorithm converges to the NE if the pseudo-gradient is strictly monotone but may fail if the pseudo-gradient is only monotone. For example, consider a $2$-player zero-sum game where the cost functions are $J_{1}(x_{1},x_{2}) = x_{1}x_{2}$, $J_{2}(x_{1},x_{2}) = -x_{1}x_{2}$. The pseudo-gradient is,
\begin{align}
	F(x) &= \begin{bmatrix}
		0 & 1 \\
		-1 & 0
	\end{bmatrix}x \label{eqn:harmonic}
\end{align}
which is monotone and the NE is $(0,0)$. If the initial state $x(0) \neq (0,0)$ then \eqref{eqn:gradientDescent} will cycle around the NE and never converge, i.e., Figure \ref{fig:grad_harmonic}. In this paper, we are interested in monotone games.

\begin{figure}[ht]
\centering
\begin{minipage}[ht]{1\columnwidth}
	\centerline{\includegraphics[width=6cm]{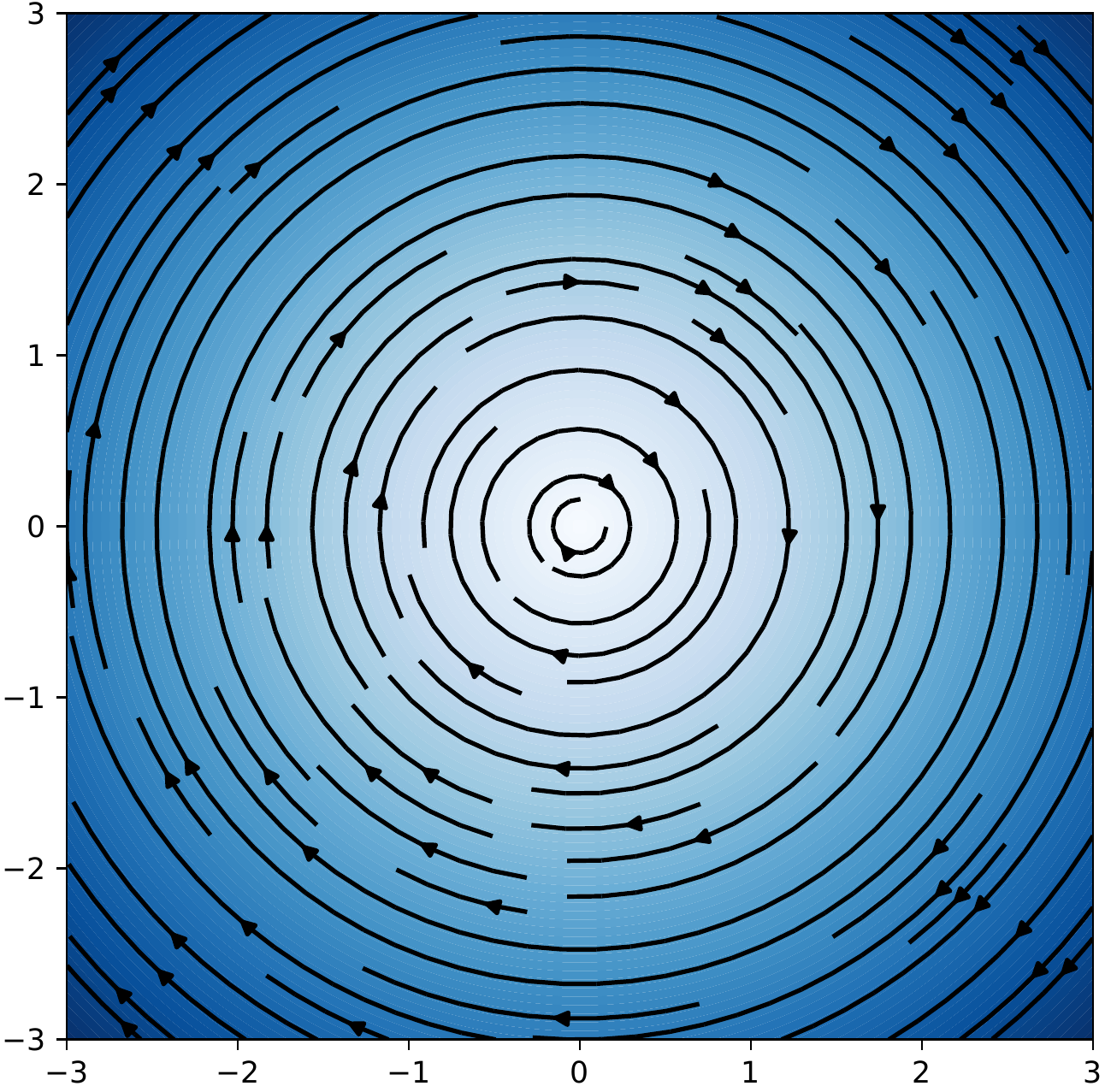}}
	\caption{Gradient vector field}\label{fig:grad_harmonic}
\end{minipage}
\end{figure}

\begin{asmp} \label{asmp:monotone}
	The pseudo-gradient $F$ is monotone.
\end{asmp}
Under Assumption \ref{asmp:Jsmooth} and \ref{asmp:monotone}, the set of NE  is  convex, (cf. Theorem 3, \cite{Scutari_2014}), characterized by $\{x^*| F(x^{*})=0\}$.

\subsection{Proposed Algorithm}

The dynamics \eqref{eqn:gradientDescent} can be viewed as an open-loop system with no feedback. We propose a new algorithm, what we are calling ``Heavy Anchor'', by modifying the feedback path with a bank of high-pass filters as depicted in Figure \ref{fig:bdAlg} below, with $u_{c}=\mathbf{0}$. We call it Heavy Anchor because we show that it looks like Polyak's heavy ball method but with the momentum term having the opposite sign.

\begin{figure}[ht]
\begin{center}
      \begin{tikzpicture}[scale=0.6]
        \def\width{5} \def\height{3} 
        \tikzstyle{block} = [draw, fill=white, rectangle,minimum height=10mm, minimum width=15mm, line width=0.5mm] 
        \tikzstyle{sum} = [draw, thick, fill=white, circle, inner sep=0,minimum size=0.6, line width=0.5mm]

        \node[block] (C) at (0,0) {$\begin{aligned}
            \dot{r} &= -\alpha r + \alpha u_{2} \\
            y_{2} &= -\beta r + \beta u_{2}
          \end{aligned}$}; 
        \node[block] (Sys) at ($(C)+(0,\height)$) {$	\begin{aligned}
            \dot{x} &= -F(x) + u_{1} \\
            y_{1} &= x
          \end{aligned}$}; 
        \node[sum] (fb) at ($(Sys) - (\width,0)$) {\Large $+$};

		\coordinate (TR) at ($(Sys) + (\width,0)$); 
		\coordinate (BL) at ($(C) - (\width,0)$); 
		\coordinate (BR) at ($(C) + (\width,0)$);
		\coordinate (control) at ($(fb) - 0.5*(\width,0)$) {};
        \coordinate (out) at ($(TR) + 0.25*(\width,0)$) {};

		\path[-,line width=0.5mm] 
		(Sys) edge node[pos=0.5, anchor=south] {$y_{1}$} (TR) (BR) edge node[pos=0.5, anchor=south] {$u_{2}$} (C)
		(C) edge node[pos=0.5, anchor=south] {$y_{2}$} (BL)
        (TR) edge (BR); 
        \path[->,line width=0.5mm] 
        (BL) edge node[pos=0.7, anchor=east] {$-$} (fb)
        (fb) edge node[pos=0.5, anchor=south] {$u_{1}$} (Sys)
        (control) edge node[pos=0.5, anchor=south] {$u_{c}$} (fb)
        (TR) edge (out)
        ;
      \end{tikzpicture}
\end{center}
\caption{Block diagram of \eqref{eqn:proposedAlg}} \label{fig:bdAlg}
\end{figure}
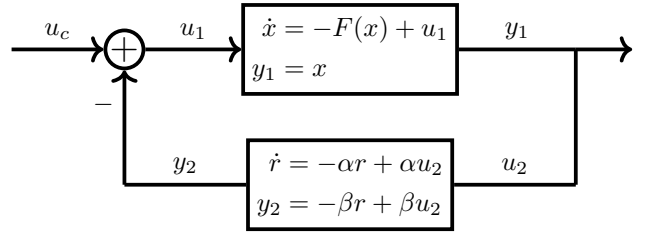

Explicitly the dynamics are,
\begin{align} \label{eqn:proposedAlg}
\begin{split}
	\dot{r} &= \alpha (x-r) \\	
	\dot{x} &= -F(x) - \beta(x-r) \\
\end{split} \tag{HA$_{\text{F}}$}
\end{align}
where $\alpha, \beta \in \reals_{++}$ and $r\in\reals^{n}$ are auxiliary variables. The individual agent dynamics are,
\begin{align*}
	 \dot{r}_{i} &= \alpha(x_{i}-r_{i}) \\
	 \dot{x}_{i} &= -\nabla_{x_i}J_{i}(x_{i},x_{-i}) - \beta(x_{i}-r_{i})
\end{align*}
The new dynamics have a gradient-play component with a dynamic estimation of the own action derivative. Figure \ref{fig:anchor_harmonic} shows the decision trajectories $x$ under Heavy Anchor for the two player zero-sum game \eqref{eqn:harmonic}.

\begin{figure}[ht]
\centering
\begin{minipage}[ht]{1\columnwidth}
	\centerline{\includegraphics[width=6cm]{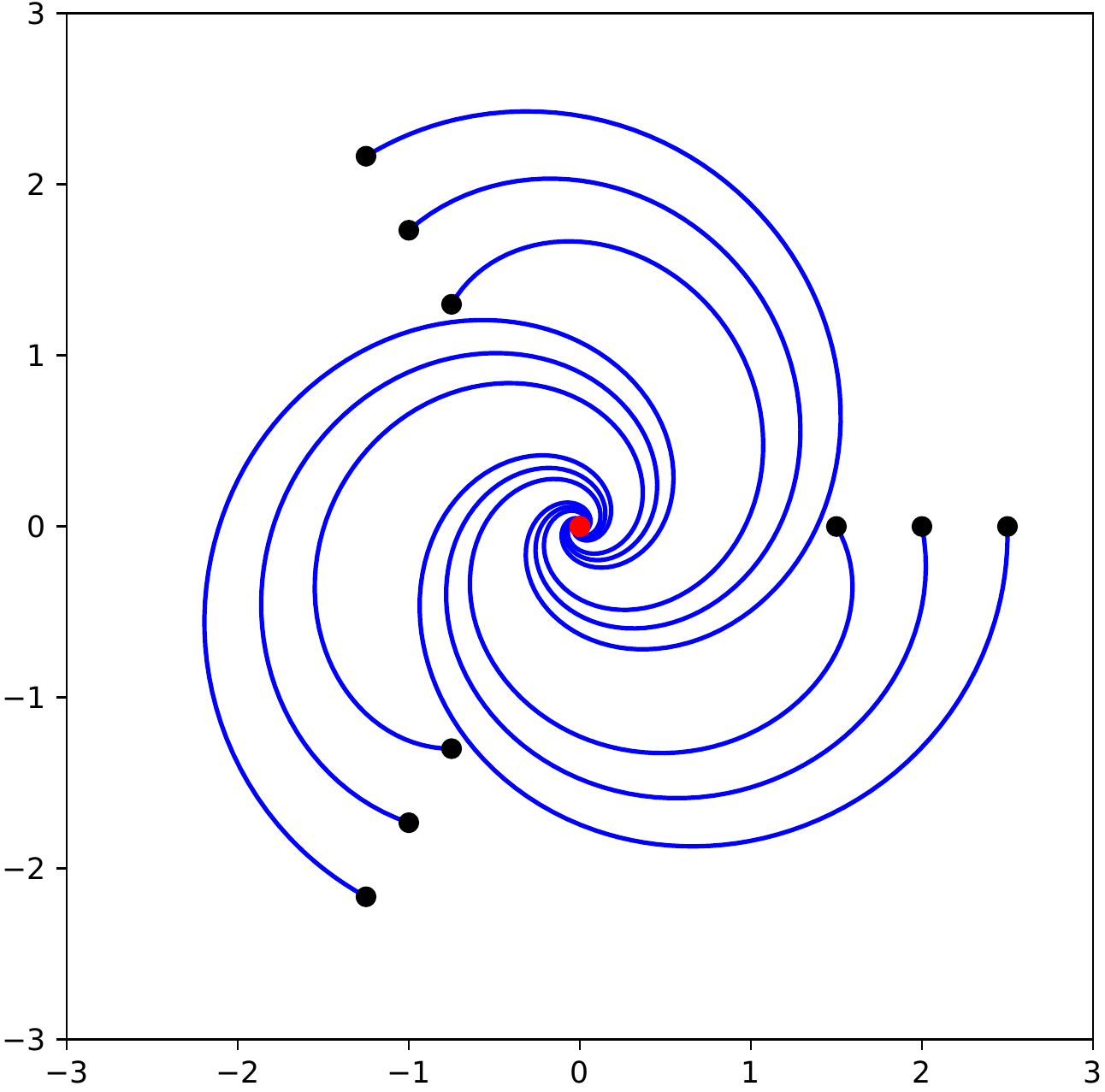}}
	\caption{Decision trajectories under Heavy Anchor}\label{fig:anchor_harmonic}
\end{minipage}
\end{figure}

\subsection{Connections to Other Dynamics/Algorithms}

Our proposed dynamics \eqref{eqn:proposedAlg} is related to other continuous-time dynamics or discrete-time algorithms. First, \eqref{eqn:proposedAlg} can be written as the second-order dynamics,
\begin{align}\label{eqn:sec_proposedAlg} 
	\ddot{x} + (\nabla F(x) + \beta + \alpha)\dot{x} + \alpha F(x) = 0.
\end{align}
Under appropriate 
restrictions on the values of $\alpha$ and $\beta$, this dynamics recovers other dynamics/algorithms. For example,   similar dynamics appears in stabilizing unknown equilibrium in chaotic systems or saddle functions  \cite{physical2}, \cite{physical3}. However, these works do not rigorously characterize stability/convergence. As another example, consider

\begin{align*}
	\ddot{x} + \alpha \dot{x} + \beta \nabla^{2}f(x)\dot{x} + \nabla f(x) + \nabla \Psi(x) = 0,
\end{align*}
where $f$ is a convex function, as considered in the optimization literature,  \cite{attouch1}, \cite{radu}. 
If $F=\nabla f$,  \eqref{eqn:sec_proposedAlg} can be written as the above (with $\Psi(x)
 \equiv  0$). However,  in a game $F$ is not a true gradient (unless the game is a potential game), but rather a pseudo-gradient, so convergence results are not applicable. 

Next, we  relate \eqref{eqn:proposedAlg} to some existing discrete-time algorithms. 
Performing an Euler discretization of \eqref{eqn:proposedAlg} gives,
\begin{align*}
	x_{k+1} &= x_{k} - sF(x_{k}) - s\beta(x_{k}-r_{k}) \\
	r_{k+1} &= r_{k} + s\alpha (x_{k} - r_{k})
\end{align*}
where $s > 0$ is the step size, which after some manipulations yields  the second-order difference equation,
\begin{align} \label{eqn:secondOrder}
x_{k+2} &= x_{k+1} - \alpha s^{2}F(x_{k+1}) 
	+ (1 - s\alpha -s\beta)\bracket{x_{k+1}-x_{k}} \notag\\
	&\quad - s(1-s\alpha)\bracket{F(x_{k+1})-F(x_{k})}.	
\end{align} 

Depending on how the parameters $\alpha$ and $\beta$ are selected we can recover some known algorithms. 	
	If $\alpha = \beta = \frac{1}{2s}$ and $F = \nabla f$ for some convex function $f$ then \eqref{eqn:secondOrder} becomes,
\begin{align*}
x_{k+2} &= x_{k+1} - \tilde{s}\bracket{2\nabla f(x_{k+1})-\nabla f(x_{k})}
\end{align*}
	where $\tilde{s} = \frac{s}{2}$ gives the optimistic gradient-descent/ascent (OGDA) \cite{optimistic}, shadow Douglas Rachford \cite{shadow}, or the forward-reflected backward method \cite{frb}. On the other hand,  if $\alpha = \frac{1}{s}$, and $F = \nabla f$ then \eqref{eqn:secondOrder} becomes,
\begin{align*}
	x_{k+2} &= x_{k+1} - s\bracket{\nabla f(x_{k+1}) + \beta \bracket{x_{k+1}-x_{k}}}
\end{align*}
where $\beta < 0$ gives Polyak's heavy-ball method, \cite{polyakBall}.

\section{Convergence under Perfect Information} \label{sec:alg}

In this section we  consider that each agent knows all $x_{-i}$ (actions that his cost depends on), hence the full (perfect) decision information setting. 
In Theorem \ref{thm:EIPconvergence} we show  that the continuous-time dynamics \eqref{eqn:sec_proposedAlg} converges for all $\alpha, \beta  > 0$, in this full  information setting. Our idea is to see \eqref{eqn:proposedAlg} as an (EIP) passivity-based feedback modification of \eqref{eqn:gradientDescent}.
To prove that $x$ in \eqref{eqn:proposedAlg} converges to an Nash Equilibrium in monotone games, we decompose the system into a feedback interconnection between two subsystems (see Fig. \ref{fig:bdAlg}). 

We show that each subsystem is EIP and use their storage functions to construct an appropriate Lyapunov function to prove that the equilibrium point of the interconnected system (which is a NE) is asymptotically stable. 

\begin{lemma} \label{lemma:eipSub1}
	Under Assumption \ref{asmp:monotone} the following system,
\begin{align} \label{eqn:eip1}
\begin{split}
	\dot{x} &= -F(x) + u_{1} \\
    y_{1} &= x.
\end{split}
\end{align}
is EIP with repect to $u_{1}$ and $y_{1}$.
\end{lemma}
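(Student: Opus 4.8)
The plan is to exhibit an explicit storage function for the system \eqref{eqn:eip1} and verify the dissipation inequality of Definition \ref{def:EIP} directly, using only the monotonicity of $F$ from Assumption \ref{asmp:monotone}. First I would identify the equilibrium condition: given an input $\overline{u}_1$, an equilibrium state $\overline{x}$ satisfies $0 = -F(\overline{x}) + \overline{u}_1$, i.e. $\overline{u}_1 = F(\overline{x})$, with corresponding output $\overline{y}_1 = \overline{x}$. (Note that when $\overline{u}_1 = \mathbf{0}$ this recovers $F(\overline{x}) = 0$, the Nash equilibrium condition, which is why this decomposition is the right one.) The natural candidate storage function is the quadratic $V(x) = \tfrac{1}{2}\snorm{x - \overline{x}}$, which is differentiable, positive semidefinite, and satisfies $V(\overline{x}) = 0$.

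Next I would compute $\dot V$ along the dynamics \eqref{eqn:eip1}:
\begin{align*}
\dot V &= \inp*{x - \overline{x}}{\dot x} = \inp*{x - \overline{x}}{-F(x) + u_1} \\
&= -\inp*{x - \overline{x}}{F(x) - F(\overline{x})} + \inp*{x - \overline{x}}{u_1 - \overline{u}_1},
\end{align*}
where in the last line I used $\overline{u}_1 = F(\overline{x})$ to rewrite $-F(x) + u_1 = -(F(x) - F(\overline{x})) + (u_1 - \overline{u}_1)$. Since $F$ is monotone, $\inp*{x - \overline{x}}{F(x) - F(\overline{x})} \geq 0$, so the first term is nonpositive and can be dropped, giving $\dot V \leq \inp*{x - \overline{x}}{u_1 - \overline{u}_1} = \inp*{y_1 - \overline{y}_1}{u_1 - \overline{u}_1}$, since $y_1 = x$ and $\overline{y}_1 = \overline{x}$. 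This is exactly the EIP inequality. Finally I would remark that since this holds for every equilibrium input $\overline{u}_1$ (equivalently every $\overline{x} \in \reals^n$, as $F$ is defined on all of $\reals^n$ by Assumption \ref{asmp:Jsmooth}), the system is EIP with respect to $u_1$ and $y_1$.

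There is essentially no hard part here; the only things to be careful about are (i) correctly identifying the equilibrium relation $\overline{u}_1 = F(\overline{x})$ so that the cross term telescopes into the passivity supply rate, and (ii) noting that the argument uses only monotonicity (not strict or strong monotonicity), which is precisely the point of introducing Heavy Anchor. One could also observe that the same computation with $\mu$-hypomonotonicity in place of monotonicity would yield $\dot V \leq \inp*{y_1 - \overline{y}_1}{u_1 - \overline{u}_1} + \mu\snorm{y_1 - \overline{y}_1}$, a ``shifted'' passivity inequality, which foreshadows the hypomonotone analysis later in the paper; but for the present lemma, monotonicity suffices.
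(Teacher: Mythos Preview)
Your proof is correct and is essentially identical to the paper's own argument: both take the quadratic storage function $V_1(x)=\tfrac{1}{2}\snorm{x-\overline{x}}$, use the equilibrium relation $\overline{u}_1=F(\overline{x})$ to split $\dot V_1$ into a monotonicity term and the supply rate, and conclude EIP from Assumption~\ref{asmp:monotone}. The only differences are cosmetic---you make the equilibrium relation explicit and add some commentary on hypomonotonicity---but the mathematical content is the same.
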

\begin{proof}
	Let $\bar{x}$ be the equilibrium of \eqref{eqn:eip1} for input $\bar{u}_{1}$, and $\bar{y}_{1}$ the corresponding output. Consider the storage function $V_{1}(x) = \frac{1}{2}\snorm{x-\bar{x}}$. Then, along the solutions of \eqref{eqn:eip1},
\begin{align}
	\dot{V}_{1}(x) &= \inp*{x-\bar{x}}{-F(x)+u_{1}+F(\bar{x})-\bar{u}_{1}} \notag \\
	&= -\inp*{x-\bar{x}}{F(x)-F(\bar{x})} + \inp*{u_{1} - \bar{u}_{1}}{y_{1} - \bar{y}_{1}} \label{eqn:EIPinequality1}
\end{align}
By Assumption \ref{asmp:monotone}, the first term is $\leq0$ and the system is EIP.
\end{proof}

\begin{lemma} \label{lemma:eipSub2}
	For any $\alpha, \beta > 0$ the following system,
\begin{align} \label{eqn:eip2}
\begin{split}
	\dot{r} &= -\alpha r + \alpha u_{2} \\
	y_{2} &= -\beta r + \beta u_{2}.
\end{split}
\end{align}
is OSEIP with repect to $u_{2}$ and $y_{2}$.
\end{lemma}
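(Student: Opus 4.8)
The plan is to exhibit an explicit storage function and verify the output-strict EIP inequality by direct computation. First I would identify the equilibrium structure of \eqref{eqn:eip2}: for a constant input $\bar u_2$, the equilibrium is $\bar r = \bar u_2$ (since $\dot r = 0 \iff r = u_2$), and the corresponding output is $\bar y_2 = -\beta\bar r + \beta\bar u_2 = 0$. Note this holds for \emph{every} $\bar u_2 \in \reals^n$, which is exactly what EIP requires. I would then propose the storage function
\begin{align*}
	V_2(r) = \frac{\beta}{2\alpha}\snorm{r - \bar r},
\end{align*}
which is positive semidefinite, differentiable, and satisfies $V_2(\bar r) = 0$.

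Next I would compute $\dot V_2$ along solutions of \eqref{eqn:eip2}. Using $\dot r = -\alpha r + \alpha u_2 = -\alpha(r - \bar r) + \alpha(u_2 - \bar u_2)$, we get
\begin{align*}
	\dot V_2 = \frac{\beta}{\alpha}\inp*{r - \bar r}{-\alpha(r-\bar r) + \alpha(u_2 - \bar u_2)} = -\beta\snorm{r - \bar r} + \beta\inp*{r - \bar r}{u_2 - \bar u_2}.
\end{align*}
The goal is to bound this by $\inp*{y_2 - \bar y_2}{u_2 - \bar u_2} - \rho\snorm{y_2 - \bar y_2}$ for some $\rho > 0$. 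Since $\bar y_2 = 0$ and $y_2 - \bar y_2 = -\beta(r - \bar r) + \beta(u_2 - \bar u_2)$, I would express everything in terms of the vectors $p := r - \bar r$ and $q := u_2 - \bar u_2$, so that $y_2 - \bar y_2 = \beta(q - p)$. Then the target right-hand side is $\beta\inp*{q - p}{q} - \rho\beta^2\snorm{q - p}$, while $\dot V_2 = -\beta\snorm{p} + \beta\inp*{p}{q}$.

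The remaining step is purely algebraic: choose $\rho > 0$ small enough that
\begin{align*}
	-\beta\snorm{p} + \beta\inp*{p}{q} \;\le\; \beta\inp*{q}{q} - \beta\inp*{p}{q} - \rho\beta^2\snorm{q-p}
\end{align*}
holds for all $p, q$. Rearranging, this requires $\rho\beta\snorm{q-p} \le \snorm{q} - 2\inp*{p}{q} + \snorm{p} = \snorm{q - p}$, i.e. $\rho\beta \le 1$, so any $\rho \in (0, 1/\beta]$ works (e.g. $\rho = 1/\beta$ gives equality, any smaller $\rho$ gives strict slack). I would double-check the sign bookkeeping carefully — that is the only place an error could creep in — but there is no genuine obstacle here; the system is linear and the dissipation inequality is essentially a completion-of-squares identity. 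In fact the computation shows the inequality holds with equality for $\rho = 1/\beta$, confirming the system is output-strictly EIP with $\rho = 1/\beta$.
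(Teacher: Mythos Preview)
Your proposal is correct and follows essentially the same approach as the paper: you use the identical storage function $V_2(r)=\tfrac{\beta}{2\alpha}\snorm{r-\bar r}$ and arrive at the same dissipation identity $\dot V_2 = \inp*{u_2-\bar u_2}{y_2-\bar y_2} - \tfrac{1}{\beta}\snorm{y_2-\bar y_2}$ (your inequality with $\rho=1/\beta$ is in fact an equality, as you note). The only cosmetic difference is that the paper substitutes $r-\bar r$ and $\dot r$ directly in terms of $y_2-\bar y_2$ to reach the identity in one line, whereas you introduce auxiliary variables $p,q$ and verify the inequality algebraically; the content is the same.
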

\begin{proof}
	Let $\bar{r}$ be the equilibrium of \eqref{eqn:eip2} for the input $\bar{u}_{2}$ and let $\bar{y}_{2}$ be the corresponding output. Consider the storage function $V_{2}(r) = \frac{\beta}{2\alpha}\snorm{r-\bar{r}}$. Then, along solutions of \eqref{eqn:eip2},
	\begin{align}
		\dot{V}_{2}(r) &= \frac{\beta}{\alpha}\inp*{r-\bar{r}}{-\alpha r + \alpha u_{2} + \alpha \bar{r} - \alpha \bar{u}_{2}} \notag \\
		&= \frac{\beta}{\alpha}\inp*{u_{2} - \frac{1}{\beta}y_{2} - \bar{u}_{2} + \frac{1}{\beta}\bar{y}_{2}}{\frac{\alpha}{\beta}(y_{2}-\bar{y}_{2})} \notag \\
		&= \inp*{u_{2}-\bar{u}_{2}}{y_{2}-\bar{y}_{2}} - \frac{1}{\beta}\snorm{y_{2}-\bar{y}_{2}} \label{eqn:EIPinequality2}
	\end{align}
	Therefore, the system is OSEIP for any $\beta>0$.\end{proof}

We now turn to the interconnected system \eqref{eqn:proposedAlg}. We show first that any equilibrium of \eqref{eqn:proposedAlg} is  a NE. Then, using the two storage functions from Lemma \ref{lemma:eipSub1} and \ref{lemma:eipSub2} we show that any equilibrium point \eqref{eqn:proposedAlg} is asymptotically stable. 
\begin{lemma}\label{eq_FI}
	Any equilibrium of \eqref{eqn:proposedAlg} is $(x^{*},x^{*})$ where $x^{*}$ is a Nash equilibrium of the game. 
\end{lemma}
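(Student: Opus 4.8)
The plan is to simply impose the equilibrium conditions on the coupled dynamics \eqref{eqn:proposedAlg} and read off the two equations. At an equilibrium $(\bar r,\bar x)$ we have $\dot r = 0$ and $\dot x = 0$. The first equation, $0 = \alpha(\bar x - \bar r)$, together with $\alpha > 0$, forces $\bar r = \bar x$. Substituting this into the second equation, $0 = -F(\bar x) - \beta(\bar x - \bar r)$, the coupling term $\beta(\bar x - \bar r)$ vanishes, leaving $F(\bar x) = 0$. Hence every equilibrium is of the form $(\bar x, \bar x)$ with $F(\bar x) = 0$.

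It then remains to identify points with $F(\bar x) = 0$ as Nash equilibria. This is exactly the characterization recorded after Assumption \ref{asmp:Jsmooth}: under Assumptions \ref{asmp:Jsmooth} and \ref{asmp:monotone} the VI \eqref{eq:ViNash} reduces to $F(x^*) = 0$, and conversely any $x^*$ with $F(x^*)=0$ solves \eqref{eq:ViNash} and is therefore a NE. So I would invoke that fact directly, concluding $\bar x = x^*$ for some Nash equilibrium $x^*$, i.e. the equilibrium is $(x^*, x^*)$.

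For completeness I would also note the converse inclusion: if $x^*$ is a NE then $F(x^*)=0$, and one checks immediately that $(r,x)=(x^*,x^*)$ gives $\dot r = \alpha(x^*-x^*)=0$ and $\dot x = -F(x^*) - \beta(x^*-x^*) = 0$, so $(x^*,x^*)$ is indeed an equilibrium of \eqref{eqn:proposedAlg}. This establishes the exact correspondence between equilibria of the dynamics and Nash equilibria of the game.

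There is essentially no real obstacle here — the argument is a one-line algebraic manipulation plus the already-established fact $F(x^*)=0 \iff x^*$ is a NE. The only point worth being slightly careful about is using $\alpha>0$ (guaranteed since $\alpha \in \reals_{++}$) to pass from $\alpha(\bar x - \bar r)=0$ to $\bar x = \bar r$; the value of $\beta$ plays no role in this lemma.
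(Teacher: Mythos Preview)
Your proof is correct and follows essentially the same approach as the paper: set the right-hand sides to zero, use $\alpha>0$ to get $\bar r=\bar x$, substitute to obtain $F(\bar x)=0$, and invoke the characterization of NE as zeros of $F$. You additionally verify the converse inclusion, which the paper's proof leaves implicit.
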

\begin{proof}
	Let the equilibrium point of \eqref{eqn:proposedAlg} be denoted $(\bar{x},\bar{r})$. Then $0 = \alpha (\bar{x}-\bar{r})$ implies that $\bar{x} = \bar{r}$ and $0 = -F(\bar{x}) - \beta(\bar{x} - \bar{r}) = -F(\bar{x})$. An equilibrium $x^{*}$ of \eqref{eqn:gradientDescent} is such that $F(x^*) = 0$ therefore $\bar{x} = \bar{r} = x^{*}$ a NE.
\end{proof}

\begin{thm}\label{thm:EIPconvergence}
Consider a game $\mathcal{G}(\mathcal{N},J_i,\Omega_i)$ under Assumption \ref{asmp:Jsmooth} and \ref{asmp:monotone}. Let the overall dynamics of the agents  be given by \eqref{eqn:proposedAlg}. Then, for any $\alpha,\beta > 0$, the set of Nash equilibrium points $\setc{(x^*}{F(x^*) = 0}$  is globally asymptotically stable.
\end{thm}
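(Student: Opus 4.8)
The plan is to view \eqref{eqn:proposedAlg} as the negative-feedback interconnection of the two subsystems \eqref{eqn:eip1} and \eqref{eqn:eip2} and to assemble a Lyapunov function from their storage functions, so that the supply-rate terms cancel and only the monotonicity term survives. Reading Fig.~\ref{fig:bdAlg} with $u_c=\mathbf{0}$, the interconnection is $u_2=y_1=x$ and $u_1=-y_2$; substituting these into \eqref{eqn:eip1}--\eqref{eqn:eip2} reproduces \eqref{eqn:proposedAlg}, which is the consistency check I would state first. By Lemma~\ref{eq_FI} every equilibrium is $(\bar x,\bar r)=(x^*,x^*)$ with $F(x^*)=0$, and the corresponding equilibrium signals of the two subsystems are $\bar u_1=\mathbf{0}$, $\bar y_1=x^*$, $\bar u_2=x^*$, $\bar y_2=\mathbf{0}$.

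Fixing an arbitrary $x^*$ with $F(x^*)=0$, I would take $V=V_1+V_2$ with $V_1(x)=\tfrac12\snorm{x-x^*}$ and $V_2(r)=\tfrac{\beta}{2\alpha}\snorm{r-x^*}$, the storage functions from Lemmas~\ref{lemma:eipSub1} and~\ref{lemma:eipSub2}. Adding \eqref{eqn:EIPinequality1} and \eqref{eqn:EIPinequality2} and using the interconnection identities $u_2-\bar u_2=y_1-\bar y_1$ and $u_1-\bar u_1=-(y_2-\bar y_2)$, the two inner-product (supply-rate) terms cancel, leaving
\[
\dot V \;=\; -\inp*{x-x^*}{F(x)-F(x^*)} \;-\; \tfrac1\beta\snorm{y_2-\bar y_2} \;\le\; 0
\]
by Assumption~\ref{asmp:monotone}. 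Since $V$ is positive definite and radially unbounded in $(x,r)$ about $(x^*,x^*)$, its sublevel sets are compact and forward invariant, so all trajectories are bounded and $(x^*,x^*)$ is Lyapunov stable.

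Next I would invoke LaSalle's invariance principle to characterize the limit behaviour. On $\{\dot V=0\}$ we have $y_2-\bar y_2=\beta(x-r)=\mathbf{0}$, i.e.\ $x=r$; on any trajectory confined to this set, $\dot r=\alpha(x-r)=\mathbf{0}$ so $r$ (hence $x=r$) is constant, and then $\dot x=-F(x)-\beta(x-r)=-F(x)=\mathbf{0}$. Hence the largest invariant set contained in $\{\dot V=0\}$ is exactly $\setc{(x^*,x^*)}{F(x^*)=0}$, and every (bounded) trajectory converges to it.

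Finally, to upgrade ``convergence to the set'' into global asymptotic stability of the whole NE set when it is not a singleton, I would note that the $\omega$-limit set of a trajectory is nonempty, compact, invariant, and contained in the equilibrium set; picking any $\hat x$ with $(\hat x,\hat x)$ in it, $\hat x$ is a NE, so the computation above with $x^*=\hat x$ makes the corresponding $V$ nonincreasing along the trajectory while $\liminf_{t\to\infty}V=0$, which forces $V\to0$ and hence $(x(t),r(t))\to(\hat x,\hat x)$. Lyapunov stability of the set is inherited from this family of Lyapunov functions. I expect the main obstacle to be precisely this last step---making the LaSalle/set-stability argument rigorous when $\setc{x^*}{F(x^*)=0}$ is a continuum---together with the bookkeeping that verifies the cross terms in $\dot V$ genuinely cancel under the interconnection read off from Fig.~\ref{fig:bdAlg}; the monotonicity estimate and the identification of the invariant set are routine.
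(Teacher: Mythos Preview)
Your proof is correct and follows essentially the same approach as the paper: the same Lyapunov function $V=V_1+V_2$ built from the two storage functions, the same cancellation of supply-rate terms under the interconnection $u_1=-y_2$, $u_2=y_1$, and the same LaSalle argument showing the largest invariant subset of $\{\dot V=0\}$ is the NE set. Your invariance computation (via $\dot r=0$ then $\dot x=0$) is a minor rephrasing of the paper's ($\dot x-\dot r=0$ on $x=r$), and your final $\omega$-limit paragraph adds a convergence-to-a-point refinement that the paper omits but which is not needed for the set-stability statement as written.
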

\begin{proof}
Note that  \eqref{eqn:proposedAlg} is the system in Fig. \ref{fig:bdAlg}) with $u_c=0$. Consider the following candidate Lyapunov function $V(x,r) = V_{1}(x) + V_{2}(r)$ where $V_{1}(x) = \frac{1}{2}\snorm{x-\bar{x}}$ and $V_{2}(r)=\frac{\beta}{2\alpha}\snorm{r-\bar{r}}$, where cf. Lemma \ref{eq_FI}, $\bar{x}=\bar{r}=x^*$. Along the solutions of \eqref{eqn:proposedAlg}, from Lemma \ref{lemma:eipSub1}, \eqref{eqn:EIPinequality1}, and Lemma \ref{lemma:eipSub2}, \eqref{eqn:EIPinequality2}, using $u_1=-y_2$, $u_2=y_1$, $\bar{x}=\bar{r}$ and cancelling terms, we obtain, 
\begin{align}\label{EIP_fdb_F}
	\dot{V}(x,r) &= -\inp*{x-\bar{x}}{F(x)-F(\bar{x})} - \frac{1}{\beta}\snorm{y_{2}-\bar{y}_{2}} \notag \\
	&= -\inp*{x-\bar{x}}{F(x)-F(\bar{x})} - \beta \snorm{x-r} 
\end{align}
By Assumption \ref{asmp:monotone}, it follows that $\dot{V} \leq  0$. We resort to LaSalle's Invariance Principle \cite{nonlinear}. Note that $\dot{V} = 0$ implies $x - r = 0$. On $x=r$  the dynamics  \eqref{eqn:proposedAlg} reduces to,
$
	0=\dot{x} - \dot{r} = -F(x) - \beta(x-r) - \alpha(x-r) = -F(x)
$,
hence the largest invariant set is $\setc{x}{F(x)=0}$. Since $V$ is radially unbounded, the conclusion follows. 
\end{proof}

\section{Partial Information} \label{sec:dist}
In Section \ref{sec:alg} we considered that each agent knows all others' decisions $x_{-i}$. In this section we propose a version of \eqref{eqn:proposedAlg}, that works in the partial information setting, i.e. when agents do not know all others' decisions and instead estimate them based on communicating with their neighbors over a communication graph $G_{c}$. 
\begin{asmp} \label{asmp:graph}
	$G_{c} = (\mathcal{N}, \mathcal{E})$ is undirected and connected.
\end{asmp}

Assume that each agent $i$ maintains an estimate vector $\mathbf{x}^{i} = col(\mathbf{x}^{i}_{j})_{j\in\mathcal{N}} \in  \reals^{n}$ where $\mathbf{x}^{i}_{j}$ is agent $i$'s estimate of player $j$'s action. Note that $\mathbf{x}^{i}_{i} = x_{i}$ is player $i$'s actual action. Let $\mathbf{x} = col(\mathbf{x}^{i})_{i\in\mathcal{N}}\in\reals^{Nn}$ represent all agents' estimates stacked into a single vector. Similarly, define the auxiliary variable $\mathbf{r}^{i} \in \reals^{n}$ for each agent $i$. 
Let the extended pseudo-gradient be denoted as $\mathbf{F}(\mathbf{x}) := col(\nabla_{x_i}J_i(\mathbf{x}^{i}))_{i\in\mathcal{N}}$, where each agent uses its estimate of others' decisions instead of true decisions. Note that at consensus of estimates, $\mathbf{x}^{i}=x$, for all $i\in\mathcal{N}$,  and  $\mathbf{F}(\mathbf{1}_{N}\otimes x)=F(x)$, for any $x  \in  \reals^n$. Let the matrix $\mathcal{R} = diag(\mathcal{R}_{i})_{i\in\mathcal{N}}$, where   $\mathcal{R}_{i} = [\mathbf{0}_{n_{i}\times n<i}I_{n_{i}}\mathbf{0}_{n_{i} \times n>i}]$, and $n<i = \sum_{\substack{j<i }}n_{j}$, $n<i = \sum_{\substack{j>i}}n_{j}$, $i,j\in\mathcal{N}$. The matrix $\mathcal{R}_{i}$ is used to get the component of a vector that belongs to agent $i$, i.e., $x_{i} = \mathcal{R}_{i}\mathbf{x}^{i}$ and $x = \mathcal{R}\mathbf{x}$. The operation $\mathbf{x} = \mathcal{R}^{T}x$ sets $\mathbf{x}^{i}_{i}=x_{i}$ and $\mathbf{x}^{i}_{j} = 0$ for all $j\neq i$. 

The problem is thus lifted into an augmented space of decisions, estimates and auxiliary variables $(\mathbf{x},\mathbf{r})$, with the original space being its consensus subspace. Consider the partial information version of \eqref{eqn:proposedAlg}, over $G_{c}$, where 
the individual agent dynamics is given as,
\begin{align} \label{eqn:distProposedAlgAgent}
	\mathbf{\dot{r}}^{i} &= \alpha (\mathbf{x}^{i} - \mathbf{r}^{i}) \\
	\mathbf{\dot{x}}^{i} &= -\mathcal{R}_{i}^{T}\nabla_{x_i}J_{i}(\mathbf{x}^{i}) - \beta(\mathbf{x}^{i} - \mathbf{r}^{i}) - c\sum_{j\in \mathcal{N}_{i}} w_{ij}(\mathbf{x}^{i} - \mathbf{x}^{j}) \notag
\end{align}
or, in compact (stacked) form, as 
\begin{align} \label{eqn:distProposedAlg}
\begin{split}
	\mathbf{\dot{r}} &= \alpha (\mathbf{x} - \mathbf{r}) \\
	\mathbf{\dot{x}} &= -\mathcal{R}^{T}\mathbf{F}(\mathbf{x}) - \beta(\mathbf{x} - \mathbf{r}) - c\mathbf{L}\mathbf{x}
\end{split} \tag{HA$_{\text{\textbf{F}}}$}
\end{align}
where $c> 0$ is a scaling factor and $\mathbf{L} = L\otimes I_{n}$.  The individual agent dynamics \eqref{eqn:distProposedAlgAgent} is the augmented version of  \eqref{eqn:proposedAlg} with a Laplacian (consensus) correction for the estimates. Note that the dynamics \eqref{eqn:distProposedAlg} is similar to  Fig. \ref{fig:bdAlg}, but with an augmented state $(\mathbf{r},\mathbf{x})$, and with feedback loop closed with $u_c = - c\mathbf{L}\mathbf{x}$.  At consensus, $\mathbf{x}=\mathbf{1}_N\otimes x$,  $\mathbf{r}=\mathbf{1}_N\otimes r$, and  \eqref{eqn:distProposedAlg} recovers   \eqref{eqn:proposedAlg}. First we show that any equilibrium point of \eqref{eqn:distProposedAlg} is a NE. 

\begin{lemma} \label{lemma:distEQ}
	Consider a game $\mathcal{G}(\mathcal{N},J_i,\Omega_i)$ under Assumption \ref{asmp:Jsmooth}, over a communication graph $G_c = (\mathcal{N},\mathcal{E})$ satisfying Assumption \ref{asmp:graph}. Let each agents' dynamics be as in \eqref{eqn:distProposedAlgAgent} or overall as \eqref{eqn:distProposedAlg}. Then, any equilibrium $(\bar{\mathbf{x}},\bar{\mathbf{r}})$ of \eqref{eqn:distProposedAlg} satisfies $\bar{\mathbf{x}}^{1} = \cdots = \bar{\mathbf{x}}^{N} = \bar{\mathbf{r}}^{1} =  \cdots = \bar{\mathbf{r}}^{N} = x^{*}$ where $x^{*}$ is a NE.
\end{lemma}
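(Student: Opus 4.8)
The plan is to read off the equilibrium conditions of \eqref{eqn:distProposedAlg}, peel off the ``anchor'' part, and then separate the gradient contribution from the Laplacian contribution by averaging over the agents. First I would set $\dot{\mathbf{r}}=0$, which gives $\alpha(\bar{\mathbf{x}}-\bar{\mathbf{r}})=0$, hence $\bar{\mathbf{r}}=\bar{\mathbf{x}}$ (as $\alpha>0$); substituting this into $\dot{\mathbf{x}}=0$ kills the $\beta$-term and leaves the single identity
\[
\mathcal{R}^{T}\mathbf{F}(\bar{\mathbf{x}}) + c\,\mathbf{L}\bar{\mathbf{x}} = 0 .
\]
So at equilibrium the high-pass filter bank is inert and everything reduces to this distributed gradient-play relation.

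The key step is to left-multiply this identity by $\mathbf{1}_{N}^{T}\otimes I_{n}$. Because $G_c$ is connected (Assumption \ref{asmp:graph}), $\mathbf{1}_{N}^{T}L=\mathbf{0}^{T}$, so $(\mathbf{1}_{N}^{T}\otimes I_{n})\mathbf{L}=(\mathbf{1}_{N}^{T}L)\otimes I_{n}=0$ and the Laplacian term vanishes, leaving $(\mathbf{1}_{N}^{T}\otimes I_{n})\,\mathcal{R}^{T}\mathbf{F}(\bar{\mathbf{x}})=0$. Now $\mathcal{R}^{T}\mathbf{F}(\bar{\mathbf{x}})=col\big(\mathcal{R}_{i}^{T}\nabla_{x_i}J_{i}(\bar{\mathbf{x}}^{i})\big)_{i\in\mathcal{N}}$, and each block $\mathcal{R}_{i}^{T}\nabla_{x_i}J_{i}(\bar{\mathbf{x}}^{i})\in\reals^{n}$ equals $\nabla_{x_i}J_{i}(\bar{\mathbf{x}}^{i})$ in the $i$-th sub-block and is zero elsewhere, so summing these over $i$ gives exactly $col\big(\nabla_{x_i}J_{i}(\bar{\mathbf{x}}^{i})\big)_{i\in\mathcal{N}}=\mathbf{F}(\bar{\mathbf{x}})$. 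Hence $\mathbf{F}(\bar{\mathbf{x}})=0$, i.e. $\nabla_{x_i}J_{i}(\bar{\mathbf{x}}^{i})=0$ for every $i$, and in particular $\mathcal{R}^{T}\mathbf{F}(\bar{\mathbf{x}})=0$.

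With $\mathcal{R}^{T}\mathbf{F}(\bar{\mathbf{x}})=0$ the equilibrium identity becomes $\mathbf{L}\bar{\mathbf{x}}=0$ (since $c>0$). Connectedness gives $\ker L=\mathrm{span}\{\mathbf{1}_{N}\}$, hence $\ker\mathbf{L}=\ker(L\otimes I_{n})=\{\mathbf{1}_{N}\otimes v:v\in\reals^{n}\}$, so $\bar{\mathbf{x}}=\mathbf{1}_{N}\otimes v$ for some $v\in\reals^{n}$, i.e. $\bar{\mathbf{x}}^{1}=\cdots=\bar{\mathbf{x}}^{N}=v$; and $\bar{\mathbf{r}}=\bar{\mathbf{x}}$ gives $\bar{\mathbf{r}}^{1}=\cdots=\bar{\mathbf{r}}^{N}=v$ as well. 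Finally, at consensus $\mathbf{F}(\mathbf{1}_{N}\otimes v)=F(v)$, while the previous step gave $\nabla_{x_i}J_{i}(v)=0$ for all $i$, so $F(v)=0$; by the VI characterization of NE under Assumption \ref{asmp:Jsmooth} (the VI \eqref{eq:ViNash} reduces to $F(x^{*})=0$) this means $v=x^{*}$ is a Nash equilibrium, completing the proof.

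The one nonroutine point is the averaging step: one has to notice that $\mathbf{1}_{N}^{T}L=\mathbf{0}^{T}$ makes the Laplacian feedback invisible to the consensus functional, and that this is exactly what forces the extended pseudo-gradient to vanish block-by-block, which in turn forces consensus of the estimates. Everything after that is a kernel computation for $L\otimes I_{n}$, so I do not expect a genuine obstacle beyond keeping the selection matrices $\mathcal{R}_{i}$ and the Kronecker-product bookkeeping straight.
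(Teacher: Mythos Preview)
Your proof is correct and follows essentially the same route as the paper's: set $\dot{\mathbf{r}}=0$ to get $\bar{\mathbf{r}}=\bar{\mathbf{x}}$, reduce $\dot{\mathbf{x}}=0$ to $\mathcal{R}^{T}\mathbf{F}(\bar{\mathbf{x}})+c\mathbf{L}\bar{\mathbf{x}}=0$, left-multiply by $\mathbf{1}_{N}^{T}\otimes I_{n}$ to kill the Laplacian and obtain $\mathbf{F}(\bar{\mathbf{x}})=0$, then use $\ker\mathbf{L}$ to force consensus and conclude $F(v)=0$. You supply more detail than the paper on why $(\mathbf{1}_{N}^{T}\otimes I_{n})\mathcal{R}^{T}\mathbf{F}(\bar{\mathbf{x}})=\mathbf{F}(\bar{\mathbf{x}})$, but the argument is otherwise identical.
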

\begin{proof}
	Let $(\bar{\mathbf{x}},\bar{\mathbf{r}})$ denote an equilibrium of \eqref{eqn:distProposedAlg}. Then at equilibrium we have $\bar{\mathbf{x}} = \bar{\mathbf{r}}$ and $\mathbf{0}_{Nn} = -\mathcal{R}^{T}\mathbf{F}(\bar{\mathbf{x}}) - \mathbf{L}\bar{\mathbf{x}}$. Pre-multiplying both sides by $(\mathbf{1}^{T}_{N}\otimes I_{n})$ yields $\mathbf{0}_{n} = \mathbf{F}(\bar{\mathbf{x}})$ and therefore, $\mathbf{0}_{Nn} = -\mathbf{L}\bar{\mathbf{x}}$. By Assumption \ref{asmp:graph}, $\mathbf{0}_{Nn} = -\mathbf{L}\bar{\mathbf{x}}$ when $\bar{\mathbf{x}}^{1} =\cdots = \bar{\mathbf{x}}^{N}$ i.e., $\bar{\mathbf{x}} = \mathbf{1}_{N}\otimes \bar{x}$ for some $\bar{x} \in \reals^{n}$. Therefore, $\mathbf{0}_{n} = \mathbf{F}(\bar{\mathbf{x}}) = \mathbf{F}(\mathbf{1}_{N}\otimes \bar{x}) = F(\bar{x})$ hence, $ \bar{x} = x^{*}$, 	
	where $x^{*}$ is a Nash Equilibrium. 
\end{proof}

\begin{remark} We note that in the full decision information case, monotonicity of $F$ was instrumental (see Theorem \ref{thm:EIPconvergence}). In the augmented space monotonicity does not necessarily hold, even if on the consensus subspace it does cf. Assumption \ref{asmp:monotone}, see \cite{PavelGNE}. This is unlike distributed optimization, where due to separability,  the extension of monotonicity/convexity properties to the augmented space holds automatically. This is the main technical difficulty in developing NE seeking dynamics in partial-information settings. \end{remark}

Our first result is proved under a monotonicity assumption on the extended pseudo-gradient $\mathbf{F}$. 

\begin{asmp}\label{asmp:extendMono}
	The extended pseudo-gradient is monotone, $\inp*{\mathbf{x}-\mathbf{x'}}{\mathcal{R}^{T}(\mathbf{F}(\mathbf{x}) - \mathbf{F}(\mathbf{x'}))} \geq 0$, $\forall \mathbf{x}, \mathbf{x'}$. 
\end{asmp}
Assumption \ref{asmp:extendMono}  has been also used in Thm.1, \cite{dianCT}, or \cite{Hu} (as cocoercivity). It represents extension of monotonicity off the consensus subspace.  Note that on the consensus subspace ($\mathbf{x}=\mathbf{1}_N\otimes x$), it is automatically satisfied by   Assumption \ref{asmp:monotone}. Under Assumption  \ref{asmp:extendMono}, the following result can be immediately obtained by exploiting EIP properties. 
\begin{thm}
	Consider a game $\mathcal{G}(\mathcal{N},J_i,\Omega_i)$ under Assumption \ref{asmp:Jsmooth}, \ref{asmp:monotone}, and \ref{asmp:extendMono}, over a communication graph $G_{c} = (\mathcal{N},\mathcal{E})$ satisfying Assumption \ref{asmp:graph}. Let the overall dynamics of the agents be given by \eqref{eqn:distProposedAlg} or \eqref{eqn:distProposedAlgAgent}. Then, for any $\alpha,\beta > 0$,   \eqref{eqn:distProposedAlg} converges asymptotically to $(\mathbf{1}_{N}\otimes x^*,\mathbf{1}_{N}\otimes x^*)$ where $x^*$ is a NE.
	\end{thm}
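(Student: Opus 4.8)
The plan is to mirror, in the augmented space $(\mathbf{x},\mathbf{r})$, the equilibrium-independent-passivity (EIP) feedback-interconnection argument of Theorem~\ref{thm:EIPconvergence}, with the Laplacian term $-c\mathbf{L}\mathbf{x}$ entering as the external input $u_c$ of Fig.~\ref{fig:bdAlg}. Concretely, I would view \eqref{eqn:distProposedAlg} as the feedback interconnection of the ``plant'' subsystem $\dot{\mathbf{x}}=-\mathcal{R}^{T}\mathbf{F}(\mathbf{x})+u_{1}$, $y_{1}=\mathbf{x}$, and the high-pass filter bank $\dot{\mathbf{r}}=-\alpha\mathbf{r}+\alpha u_{2}$, $y_{2}=-\beta\mathbf{r}+\beta u_{2}$, closed by $u_{2}=y_{1}$ and $u_{1}=-y_{2}-c\mathbf{L}\mathbf{x}$. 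By Lemma~\ref{lemma:distEQ} every equilibrium is $(\bar{\mathbf{x}},\bar{\mathbf{r}})=(\mathbf{1}_{N}\otimes x^{*},\mathbf{1}_{N}\otimes x^{*})$ with $F(x^{*})=0$; in particular $\bar{\mathbf{x}}$ lies in the consensus subspace, so $\mathbf{L}\bar{\mathbf{x}}=\mathbf{0}$, and hence the plant equilibrium input satisfies $\bar{u}_{1}=\mathcal{R}^{T}\mathbf{F}(\bar{\mathbf{x}})=\mathbf{0}=-\bar{y}_{2}$.

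Next I would show the plant is EIP: with storage $V_{1}(\mathbf{x})=\tfrac{1}{2}\snorm{\mathbf{x}-\bar{\mathbf{x}}}$, the same computation as in Lemma~\ref{lemma:eipSub1} gives $\dot{V}_{1}=-\inp*{\mathbf{x}-\bar{\mathbf{x}}}{\mathcal{R}^{T}(\mathbf{F}(\mathbf{x})-\mathbf{F}(\bar{\mathbf{x}}))}+\inp*{u_{1}-\bar{u}_{1}}{y_{1}-\bar{y}_{1}}$, and the first term is $\le 0$ \emph{by Assumption~\ref{asmp:extendMono}}. This is exactly the place where extended monotonicity is needed: as emphasized in the Remark preceding Assumption~\ref{asmp:extendMono}, plain monotonicity of $F$ on the consensus subspace (Assumption~\ref{asmp:monotone}) does not control this term off that subspace. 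The filter subsystem is OSEIP exactly as in Lemma~\ref{lemma:eipSub2}, with $V_{2}(\mathbf{r})=\tfrac{\beta}{2\alpha}\snorm{\mathbf{r}-\bar{\mathbf{r}}}$ and supply rate \eqref{eqn:EIPinequality2}.

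Then I would take $V(\mathbf{x},\mathbf{r})=V_{1}(\mathbf{x})+V_{2}(\mathbf{r})$ and differentiate along \eqref{eqn:distProposedAlg}. Substituting $u_{2}=y_{1}$, $u_{1}=-y_{2}-c\mathbf{L}\mathbf{x}$, $\bar{u}_{1}=-\bar{y}_{2}$, $\bar{u}_{2}=\bar{y}_{1}$, the passivity cross terms $\inp*{u_{1}-\bar{u}_{1}}{y_{1}-\bar{y}_{1}}+\inp*{u_{2}-\bar{u}_{2}}{y_{2}-\bar{y}_{2}}$ collapse to $-c\inp*{\mathbf{L}\mathbf{x}}{\mathbf{x}-\bar{\mathbf{x}}}=-c\,\mathbf{x}^{T}\mathbf{L}\mathbf{x}$, using $\mathbf{L}=\mathbf{L}^{T}$ and $\mathbf{L}\bar{\mathbf{x}}=\mathbf{0}$, so that
\[
\dot{V}=-\inp*{\mathbf{x}-\bar{\mathbf{x}}}{\mathcal{R}^{T}(\mathbf{F}(\mathbf{x})-\mathbf{F}(\bar{\mathbf{x}}))}-c\,\mathbf{x}^{T}\mathbf{L}\mathbf{x}-\tfrac{1}{\beta}\snorm{y_{2}-\bar{y}_{2}}\le 0 ,
\]
the three terms being nonpositive by Assumption~\ref{asmp:extendMono}, by $\mathbf{L}\succeq 0$, and trivially. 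I would then apply LaSalle's invariance principle: on $\{\dot{V}=0\}$ one has $\mathbf{x}^{T}\mathbf{L}\mathbf{x}=0$, hence (Assumption~\ref{asmp:graph}) $\mathbf{x}=\mathbf{1}_{N}\otimes x$ is a consensus vector, and $\snorm{y_{2}-\bar{y}_{2}}=0$ with $y_{2}=\beta(\mathbf{x}-\mathbf{r})$, $\bar{y}_{2}=\mathbf{0}$ forces $\mathbf{x}=\mathbf{r}$; on this set \eqref{eqn:distProposedAlg} reduces to $\dot{\mathbf{r}}=\mathbf{0}$ and $\dot{\mathbf{x}}=-\mathcal{R}^{T}\mathbf{F}(\mathbf{x})$ (the $\beta$- and $\mathbf{L}$-terms vanish), and since the $i$-th block of $\mathcal{R}^{T}\mathbf{F}(\mathbf{x})$ is $\mathcal{R}_{i}^{T}\nabla_{x_{i}}J_{i}(x)$ with $\mathcal{R}_{i}^{T}$ injective, the trajectory stays on this set only if $\nabla_{x_{i}}J_{i}(x)=0$ for all $i$, i.e.\ $F(x)=0$. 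Thus the largest invariant subset of $\{\dot{V}=0\}$ is $\{(\mathbf{1}_{N}\otimes x^{*},\mathbf{1}_{N}\otimes x^{*}):F(x^{*})=0\}$, and radial unboundedness of $V$ gives boundedness of trajectories and convergence to this set.

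The main obstacle I anticipate is confined to the plant-EIP step: without Assumption~\ref{asmp:extendMono} the first term of $\dot{V}$ has no definite sign and the lifted dynamics need not be passive, so that assumption is essential rather than cosmetic. A secondary, bookkeeping-type difficulty is the final LaSalle step, where one must carry the block-sparse structure of $\mathcal{R}^{T}\mathbf{F}$ (each $\mathcal{R}_{i}^{T}$ only perturbs the $i$-th component $\mathbf{x}^{i}_{i}$) through the argument to conclude that a trajectory confined to $\{\dot{V}=0\}$ cannot remain at consensus with $\mathbf{x}=\mathbf{r}$ unless $F(x)=0$; this parallels the closing argument of Theorem~\ref{thm:EIPconvergence} but with the $\mathcal{R}_{i}$ selectors retained throughout.
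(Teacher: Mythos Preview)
Your proposal is correct and follows essentially the same approach as the paper: the same Lyapunov function $V=\tfrac{1}{2}\snorm{\mathbf{x}-\bar{\mathbf{x}}}+\tfrac{\beta}{2\alpha}\snorm{\mathbf{r}-\bar{\mathbf{r}}}$, the same EIP/OSEIP decomposition with the Laplacian feedback entering as $u_c$, the same use of Assumption~\ref{asmp:extendMono} to sign the extended pseudo-gradient term, and the same LaSalle argument (consensus from $\mathbf{L}\mathbf{x}=0$, then $\mathbf{x}=\mathbf{r}$, then $\mathcal{R}^{T}\mathbf{F}(\mathbf{1}_{N}\otimes x)=0\Rightarrow F(x)=0$). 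Your write-up is slightly more explicit about the block structure of $\mathcal{R}^{T}$ in the invariance step, but otherwise the route is the paper's.
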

\begin{proof}
Note that \eqref{eqn:distProposedAlg} is similar to a dynamics as in Fig. \ref{fig:bdAlg}, but with an augmented state $\mathbf{x}$ (decisions and estimates), and with feedback loop closed with $u_c = - c\mathbf{L}\mathbf{x}$. We exploit the EIP properties of the two, forward and feedback, subsystems. Namely, consider 
$V(\mathbf{x},\mathbf{r}) = \frac{1}{2}\snorm{\mathbf{x}-\bar{\mathbf{x}}} + \frac{\beta}{2\alpha}\snorm{\mathbf{r}-\bar{\mathbf{r}}}$, where $\bar{\mathbf{x}} = \bar{\mathbf{r}}= \mathbf{1}_{N}\otimes \bar{x}$ (cf. Lemma \ref{lemma:distEQ}). Then, along solutions of \eqref{eqn:distProposedAlg}, similar to \eqref{EIP_fdb_F} in Theorem \ref{thm:EIPconvergence}, we can obtain, 
\begin{align}\label{EIP_fdb_boldF}
	\dot{V}(\mathbf{x},\mathbf{r}) &= -\inp*{\mathbf{x}-\bar{\mathbf{x}}}{\mathcal{R}^{T}\mathbf{F}(\mathbf{x})-\mathcal{R}^{T}\mathbf{F}(\bar{\mathbf{x}})} \notag \\
	& \quad 
	+\inp*{\mathbf{x}-\bar{\mathbf{x}}}{u_c-\bar{u}_c} - \beta \snorm{\mathbf{x}-\mathbf{r}} 
\end{align}
where  $u_{c} = -c\mathbf{L}\mathbf{x}$. The first term is nonpositive under Assumption \ref{asmp:extendMono}. For any $\alpha,\beta
>0$,  the system is strictly EIP from $u_{c}$ to $y_{1}=\mathbf{x}$, and with $u_{c} = -c\mathbf{L}\mathbf{x}$, since $\mathbf{L}$ is positive semidefinite, it follows that $\dot{V}  \leq 0$. We use LaSalle's Invariance Principle and  find the largest invariant set \cite{nonlinear}. Note that  $\dot{V} = 0$ implies that $\mathbf{x}=\mathbf{r}$ and $\mathbf{L}\mathbf{x}=\mathbf{L}\bar{\mathbf{x}}$. Since $\bar{\mathbf{x}} = \mathbf{1}_{N}\otimes \bar{x}$ (cf. Lemma \ref{lemma:distEQ}), $\mathbf{L}\mathbf{x}=\mathbf{L}\bar{\mathbf{x}}=0$, hence $\mathbf{x}=\mathbf{1}_{N}\otimes x$ for some $x \in \reals^{n}$. Then, on  $\mathbf{x}=\mathbf{r}$, the dynamics \eqref{eqn:distProposedAlg} reduces to $0=\dot{\mathbf{x}}-\dot{\mathbf{r}}= -\mathcal{R}^{T}\mathbf{F}(\mathbf{1}_{N}\otimes x)=-\mathcal{R}^{T}F(x)$, which implies $F(x)=0$, hence the largest invariant set is  the NE set. Since $V$ is radially unbounded, the conclusion follows.
\end{proof}

On the other hand, Assumption \ref{asmp:extendMono} can be quite restrictive. Instead of this assumption on $\mathbf{F}$, we will use a weaker additional condition, this time on the pseudo-gradient $F$. This is the inverse Lipschitz property. In the next section we discuss this property.

\section{Inverse Lipschitz} \label{sec:invLip}

In convex analysis and monotone operator theory there are three properties on an operator $T$ that are frequently used and are important. These three properties are: $\mu$-strongly monotone, $L$-Lipschitz, and $C$-cocoercive, which describe upper and lower bounds on an operator $T$. However, it appears there is a natural definition missing.

\begin{defn}
	An operator $T : \mathcal{H} \to 2^{\mathcal{H}}$ is $R$-inverse Lipschitz if,
\begin{align*}
	\norm{x-y} &\leq R \norm{Tx-Ty} \quad \forall x,y \in \mathcal{H},
\end{align*}
related to the condition used by Rockafeller \cite{rockInvLip}.
\end{defn}

\begin{remark}
	A $C$-cocoercive operator is also called $C$-inverse strongly monotone, because if $x=T^{-1}u$ and $y=T^{-1}v$ then,
	\begin{align*}
		\inp*{Tx - Ty}{x-y} &\geq C \snorm{Tx-Ty} \\
		\inp*{u - v}{T^{-1}u-T^{-1}v} &\geq C \snorm{u-v}
	\end{align*}
	This is the same as the inverse operator $T^{-1}$ being $C$-strongly monotone. In the same spirit, we call $T$ a $R$-inverse Lipschitz operator because it is the same as the inverse operator $T^{-1}$ being $R$-Lipschitz, i.e.,
	\begin{align*}
		\norm{x-y} &\leq R \norm{Tx-Ty} \\
		\norm{T^{-1}u-T^{-1}v} &\leq R \norm{u-v}
	\end{align*}
\end{remark}

\subsection{Similarities}
\subsubsection{Similarities in monotone operator theory}
The property of inverse Lipschitz is closely related to coercive or radially unbounded property,
\begin{defn}
	A function $f:\mathcal{H}\to\reals$ is coercive (radially unbounded) if,
\begin{align*}
	\lim_{\norm{x}\to \infty} f(x) = +\infty
\end{align*}
\end{defn}

Since coercive functions are real valued and $T$ is in general  vector valued, taking the norm can be thought of as an extension of the definition, i.e., $\lim_{\norm{x}\to \infty} \norm{Tx} = +\infty$. If $y=0$ and define $\tilde{T}x = Tx - T0$ then from the $R$-inverse Lipschitz definition of $T$, we see that $\norm{x} \leq R\norm{\tilde{T}x}$. Therefore, $R$-inverse Lipschitz is a stronger growth condition relating the input to the output, similar to coercivity and implies that $\tilde{T}$ is coercive.

\subsubsection{Similarities to optimization}

In optimization there are weaker conditions than strong convexity that can get linear convergence rates \cite{opt_weak_conditions}. One of these conditions is the Polyak-Lojasiewicz (PL) inequality. A function $f$ satisfies the (PL) inequality if $\forall x\in X$, $\frac{1}{2}\snorm{\nabla f(x)} \geq \mu \bracket{f(x)-f^{*}}$, where $f^*$ is the value of $f$ at the optimal solution. Theorem 2 \cite{opt_weak_conditions} shows that if $f$ has a Lipschitz-continuous gradient then (PL) is equivalent to the Error Bound (EB) inequality, $\forall x\in X$, $\norm{\nabla f (x)} \geq \mu \norm{x - Proj_{X^{*}}(x)}$. If $X = \reals^{n}$ then $\nabla f(Proj_{X^{*}}(x)) = 0$ and the condition can be written as, $\norm{\nabla f (x) - \nabla f (y)} \geq \mu \norm{x - y}$ where $y = Proj_{X^{*}}(x)$, hence $\nabla f$ is $\frac{1}{\mu}$-inverse Lipschitz. Thus, the $R$-inverse Lipschitz condition is the monotone operator equivalent to the Error Bound inequality / Polyak-Lojasiewicz inequality. If we replace $\nabla f $ with a monotone operator then this condition is the condition used by Rockafeller in \cite{rockInvLip}, 
\begin{align*}
	\norm{x-y} \leq R \norm{Tx-y}
\end{align*}
$\forall x\in \mathcal{H}$ and $y = Ty$ (restricted). If we remove the restriction of $y = Ty$ then we get the definition of $R$-inverse Lipschitz.

\subsubsection{Similarities in control / passivity} Passive systems with an inverse Lipschitz property have been analyzed in Chapter 6, section 11 \cite{control_inv_lip}. However, the analysis is only for the case when the operator is strongly monotone.

\subsection{Relations / Properties}

The following diagram shows the relationship between $R$-inverse Lipschitz and the other properties.

\begin{figure}[h]
\centering
\begin{tikzcd}[row sep=scriptsize, column sep=16ex]
& \mu \arrow[dd, "Prop.\ \ref{ul->c}"', bend right=15] \arrow[dr] \arrow[r, "Prop.\ \ref{u->r}"] & R \\
cvx \arrow[ur, "Prop.\ \ref{r->u}"] \arrow[dr, "Prop.\ \ref{l->c}"'] & & M \\
& C \arrow[uu, "Prop.\ \ref{cr->u}"', bend right=15] \arrow[ur] \arrow[r, "Prop.\ \ref{c->l}"'] & L
\end{tikzcd}
\caption{$cvx$: set of convex functions, $\mu$: set of $\mu$-strongly monotone operators, $C$: set of $C$-cocoercive operators, $M$: set of monotone operators, $R$: set of $R$-inverse Lipschitz operators, $L$: set of Lipschitz operators}
\end{figure}
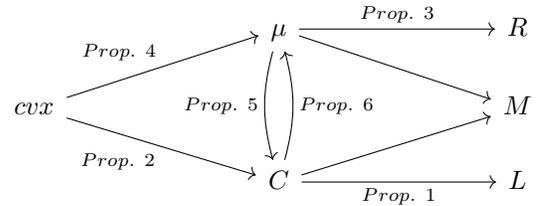

\begin{prop}\label{c->l}
	If an operator $T : \mathcal{H} \to 2^{\mathcal{H}}$ is $C$-inverse strongly monotone then it is $\frac{1}{C}$-Lipschitz
\end{prop}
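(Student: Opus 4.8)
The plan is to chain the cocoercivity inequality with the Cauchy--Schwarz inequality. Recall that by definition $T$ being $C$-cocoercive ($C$-inverse strongly monotone) means $\inp*{Tx-Ty}{x-y} \geq C\snorm{Tx-Ty}$ for all $x,y\in\mathcal{H}$. The goal is to bound $\norm{Tx-Ty}$ above by $\frac{1}{C}\norm{x-y}$.

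First I would fix arbitrary $x,y\in\mathcal{H}$ and apply the Cauchy--Schwarz inequality to the left-hand side: $\inp*{Tx-Ty}{x-y} \leq \norm{Tx-Ty}\,\norm{x-y}$. Combining this with the cocoercivity estimate gives $C\snorm{Tx-Ty} \leq \norm{Tx-Ty}\,\norm{x-y}$.

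Next I would split into two cases. If $\norm{Tx-Ty}=0$, then the claimed inequality $\norm{Tx-Ty}\leq \frac{1}{C}\norm{x-y}$ holds trivially since the right-hand side is nonnegative. If $\norm{Tx-Ty}>0$, divide both sides of $C\snorm{Tx-Ty} \leq \norm{Tx-Ty}\,\norm{x-y}$ by $\norm{Tx-Ty}$ to obtain $C\norm{Tx-Ty}\leq \norm{x-y}$, i.e., $\norm{Tx-Ty}\leq\frac{1}{C}\norm{x-y}$. Since $x,y$ were arbitrary, $T$ is $\frac{1}{C}$-Lipschitz.

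There is no real obstacle here; the only mild care needed is the degenerate case $\norm{Tx-Ty}=0$ (so that the division step is justified), and noting $C>0$ so that $\frac{1}{C}$ is well defined. If $T$ is set-valued one would additionally remark that cocoercivity forces $Tx$ and $Ty$ to be singletons wherever they are nonempty (the inequality must hold for all selections), so the statement is really about the single-valued branch; but for the present purposes treating $T$ as single-valued, as elsewhere in the paper, suffices.
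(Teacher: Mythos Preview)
Your proof is correct and follows essentially the same approach as the paper: chain the cocoercivity inequality with Cauchy--Schwarz to get $C\snorm{Tx-Ty}\leq\norm{Tx-Ty}\,\norm{x-y}$, then cancel one factor of $\norm{Tx-Ty}$. Your explicit handling of the degenerate case $\norm{Tx-Ty}=0$ is slightly more careful than the paper's version, which simply divides through without comment.
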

\begin{proof}
Note that a $C$-inverse strongly monotone satisfies, $C \snorm{Tx-Ty} \leq \inp*{Tx-Ty}{x-y} \leq \norm{Tx-Ty}\norm{x-y}$, therefore  $\norm{Tx-Ty} \leq \frac{1}{C}\norm{x-y}$.
\end{proof}
\begin{prop}[Baillon-Haddad \cite{Baillon}] \label{l->c}
	If an operator $T : \mathcal{H} \to 2^{\mathcal{H}}$ is $\frac{1}{C}$-Lipschitz and is the gradient of a convex function then it is $C$-inverse strongly monotone.
\end{prop}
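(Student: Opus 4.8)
The plan is to recover the classical Baillon--Haddad argument, which reduces $C$-cocoercivity of $\nabla f$ to the elementary ``descent lemma'' for functions with Lipschitz gradient, combined with the observation that a conveniently shifted copy of $f$ attains its global minimum at a prescribed point. Write $L = 1/C$, so by hypothesis $f$ is convex and differentiable with $\nabla f$ being $L$-Lipschitz, and $T = \nabla f$; the goal is the inequality $\inp*{\nabla f(x) - \nabla f(y)}{x-y} \geq \frac{1}{L}\snorm{\nabla f(x) - \nabla f(y)}$ for all $x,y \in \mathcal{H}$.

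First I would record the descent lemma: since $\nabla f$ is $L$-Lipschitz, integrating $\nabla f$ along the segment from $x$ to $u$ gives $f(u) \leq f(x) + \inp*{\nabla f(x)}{u-x} + \frac{L}{2}\snorm{u-x}$ for all $x,u$. Next, fix $y$ and introduce the shifted function $g(x) := f(x) - \inp*{\nabla f(y)}{x}$. Then $g$ is convex (a convex function plus a linear term), differentiable with $\nabla g(x) = \nabla f(x) - \nabla f(y)$, and $\nabla g$ is again $L$-Lipschitz; crucially $\nabla g(y) = 0$, so $y$ is a global minimizer of $g$.

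Now apply the descent lemma to $g$ at the point $u = x - \frac{1}{L}\nabla g(x)$ (the minimizer of the quadratic upper bound), obtaining $g\bracket{x - \tfrac{1}{L}\nabla g(x)} \leq g(x) - \frac{1}{2L}\snorm{\nabla g(x)}$. Since $y$ minimizes $g$, the left-hand side is $\geq g(y)$, so $g(x) - g(y) \geq \frac{1}{2L}\snorm{\nabla g(x)}$. Unpacking the definition of $g$ yields the one-sided estimate $f(x) - f(y) - \inp*{\nabla f(y)}{x-y} \geq \frac{1}{2L}\snorm{\nabla f(x) - \nabla f(y)}$. Swapping the roles of $x$ and $y$ gives the symmetric inequality $f(y) - f(x) - \inp*{\nabla f(x)}{y-x} \geq \frac{1}{2L}\snorm{\nabla f(x) - \nabla f(y)}$; adding the two cancels the function values and produces $\inp*{\nabla f(x) - \nabla f(y)}{x-y} \geq \frac{1}{L}\snorm{\nabla f(x) - \nabla f(y)}$, i.e.\ $T = \nabla f$ is $\frac{1}{L} = C$-cocoercive ($C$-inverse strongly monotone).

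The routine ingredients are the descent lemma and the algebraic bookkeeping in the last step; the one genuinely clever move — and the only place the convexity hypothesis is used in an essential way — is the introduction of the shift $g(x) = f(x) - \inp*{\nabla f(y)}{x}$ together with the fact that $y$ is its global minimizer, which is what ``anchors'' the descent-lemma bound at $y$. I expect this to be the main conceptual obstacle; everything else is mechanical. (Note that ``$T$ is the gradient of a convex function'' implicitly asserts $T$ is single-valued and everywhere-defined on $\mathcal{H}$, so the set-valued notation $T:\mathcal{H}\to 2^{\mathcal{H}}$ causes no difficulty.)
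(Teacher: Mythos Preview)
Your argument is correct and is the classical Baillon--Haddad proof. The paper itself does not supply a proof for this proposition; it simply attributes the result to \cite{Baillon} and moves on, so there is nothing to compare your approach against.
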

\begin{prop}\label{u->r}
	If an operator $T : \mathcal{H} \to 2^{\mathcal{H}}$ is $\mu$-strongly monotone then it is $\frac{1}{\mu}$-inverse Lipschitz
\end{prop}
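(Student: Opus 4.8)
The plan is to use the Cauchy--Schwarz inequality together with the definition of $\mu$-strong monotonicity. For any $x,y \in \mathcal{H}$, strong monotonicity gives $\mu \snorm{x-y} \leq \inp*{Tx-Ty}{x-y}$. The right-hand side is bounded above by $\norm{Tx-Ty}\,\norm{x-y}$ by Cauchy--Schwarz, so $\mu \snorm{x-y} \leq \norm{Tx-Ty}\,\norm{x-y}$.

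Next I would divide through by $\norm{x-y}$, which is legitimate whenever $x \neq y$; this yields $\mu \norm{x-y} \leq \norm{Tx-Ty}$, i.e.\ $\norm{x-y} \leq \frac{1}{\mu}\norm{Tx-Ty}$, which is exactly the $\frac{1}{\mu}$-inverse Lipschitz property. The case $x=y$ is trivial since both sides of the inverse Lipschitz inequality vanish.

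A small technical remark: since $T$ is written as a possibly set-valued operator $\mathcal{H} \to 2^{\mathcal{H}}$, one should read the statement with the usual convention for set-valued monotone operators --- the inequalities hold for arbitrary selections $u \in Tx$, $v \in Ty$, and $\norm{Tx-Ty}$ is interpreted accordingly as $\norm{u-v}$. With that reading the argument above is unchanged. There is no real obstacle here; the only thing to be careful about is not dividing by zero, which is handled by treating $x=y$ separately. This is essentially the operator-theoretic mirror of the elementary fact that $C$-cocoercivity implies $\frac{1}{C}$-Lipschitz (Proposition \ref{c->l}), with the roles of the two factors in the Cauchy--Schwarz step swapped.
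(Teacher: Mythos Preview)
Your proof is correct and follows essentially the same approach as the paper: apply strong monotonicity, bound the inner product via Cauchy--Schwarz, and cancel one factor of $\norm{x-y}$. The paper's argument is terser (it does not explicitly separate the $x=y$ case or comment on set-valued selections), but the mathematical content is identical.
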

\begin{proof}
Note that a $\mu$-inverse strongly monotone satisfies, $\mu \snorm{x-y} \leq \inp*{Tx-Ty}{x-y} \leq \norm{Tx-Ty}\norm{x-y}$, therefore  $\norm{x-y} \leq \frac{1}{\mu}\norm{Tx-Ty}$.
\end{proof}
\begin{prop}\label{r->u}
	If an operator $T : \mathcal{H} \to 2^{\mathcal{H}}$ is $\frac{1}{\mu}$-inverse Lipschitz and is the gradient of a convex function then it is $\mu$-strongly monotone.
\end{prop}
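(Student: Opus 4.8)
The plan is to reduce the statement to the Baillon--Haddad theorem (Proposition \ref{l->c}) applied to the \emph{inverse} operator, exactly as suggested by the duality ``$R$-inverse Lipschitz $\Leftrightarrow$ $R$-Lipschitz of $T^{-1}$'' and ``$C$-cocoercive $\Leftrightarrow$ $C$-strongly monotone of $T^{-1}$'' noted in the remarks above. Write $T=\nabla f$ with $f$ convex; being a gradient, $T$ is single-valued with full domain $\mathcal{H}$. The $\frac1\mu$-inverse Lipschitz inequality $\norm{x-y}\le\frac1\mu\norm{Tx-Ty}$ forces $T$ to be injective, so the inverse relation $T^{-1}$ is a single-valued map on $\mathrm{ran}\,T$, and rewriting the same inequality with $u=Tx$, $v=Ty$ gives $\norm{T^{-1}u-T^{-1}v}\le\frac1\mu\norm{u-v}$ for all $u,v\in\mathrm{ran}\,T$; i.e. $T^{-1}$ is $\frac1\mu$-Lipschitz on its domain.

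Next I would identify $T^{-1}$ as a gradient of a convex function. Since $f$ is convex (and, without loss of generality, proper and lower semicontinuous, which is harmless because $\nabla f$ is continuous), $T=\nabla f=\partial f$, and the Fenchel inversion rule $(\partial f)^{-1}=\partial f^{*}$ shows that on $\mathrm{ran}\,T=\mathrm{dom}(T^{-1})$ the single-valued map $T^{-1}$ is a selection of $\partial f^{*}$, hence $T^{-1}=\nabla f^{*}$ there, with $f^{*}$ convex. Applying Proposition \ref{l->c} (Baillon--Haddad) to $T^{-1}$ with $\frac1C=\frac1\mu$ yields that $T^{-1}$ is $\mu$-cocoercive:
\[
\inp*{T^{-1}u-T^{-1}v}{u-v}\ \ge\ \mu\,\snorm{T^{-1}u-T^{-1}v},\qquad \forall\,u,v\in\mathrm{ran}\,T .
\]

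Finally, substituting $u=Tx$, $v=Ty$ for arbitrary $x,y\in\mathcal{H}$ and using $T^{-1}(Tx)=x$, $T^{-1}(Ty)=y$ turns this into $\inp*{x-y}{Tx-Ty}\ge\mu\,\snorm{x-y}$, which is precisely $\mu$-strong monotonicity of $T$. I expect the only delicate point to be the identification $T^{-1}=\nabla f^{*}$: the conjugate $f^{*}$ need not be differentiable on all of $\mathcal{H}$, only on $\mathrm{ran}\,T$, so one must either apply Baillon--Haddad on that (convex) set or, equivalently, rerun the Baillon--Haddad argument directly for the $\frac1\mu$-Lipschitz monotone selection $T^{-1}$ of $\partial f^{*}$, noting that this argument only ever evaluates $T^{-1}$ at points of $\mathrm{ran}\,T$. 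Everything else is the one-line substitution above.
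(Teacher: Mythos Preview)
Your proposal is correct and follows essentially the same route as the paper: invert to $T^{-1}=\partial f^{*}$, observe it is $\frac{1}{\mu}$-Lipschitz, apply a convex-duality result, and translate back. The only difference is cosmetic: the paper cites Rockafellar--Wets Prop.~12.60 (Lipschitz gradient $\Rightarrow$ strongly convex conjugate, then invokes $f^{**}=f$) where you instead use Baillon--Haddad (Proposition~\ref{l->c}) to get $\mu$-cocoercivity of $T^{-1}$ and substitute directly---these are equivalent formulations, and your version has the minor advantage of reusing a proposition already stated in the paper rather than an external reference.
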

\begin{proof}
	Let $\partial f = T$. If $\partial f$ is $\frac{1}{\mu}$-inverse Lipschitz then $(\partial f)^{-1} = \partial f^{*}$ is $\frac{1}{\mu}$-Lipschitz. From Prop 12.60(a,b) \cite{RockafellarVarAnal}, if a function $f$ is convex and $\partial f$ is $\frac{1}{\mu}$-Lipschitz then $f^{*}$ is $\mu$-strongly monotone. Since $\partial f^{*}$ is $\frac{1}{\mu}$-Lipschitz and $f^{**} = f$ we can conclude that $f$ is $\mu$-strongly monotone.
\end{proof}

\begin{prop}\label{ul->c}
	If an operator $T : \mathcal{H} \to 2^{\mathcal{H}}$ is $\mu$-strongly monotone and $L$-Lipschitz then it is $\frac{\mu}{L^{2}}$-inverse strongly monotone, or cocoercive with $C = \frac{\mu}{L^{2}}$.
\end{prop}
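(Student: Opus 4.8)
The plan is to chain the two hypotheses directly into the cocoercivity inequality, with no auxiliary construction needed. First I would observe that $L$-Lipschitzness, $\norm{Tx-Ty} \le L\norm{x-y}$, is equivalent upon squaring to $\snorm{Tx-Ty} \le L^{2}\snorm{x-y}$, which rearranges to the lower bound $\snorm{x-y} \ge \tfrac{1}{L^{2}}\snorm{Tx-Ty}$ on the domain displacement. I would also note in passing that the Lipschitz bound forces $T$ to be single-valued (taking $x=y$ gives $u=v$ for any $u,v\in Tx$), so writing $Tx$ for its unique value is unambiguous despite the $2^{\mathcal{H}}$-valued signature.

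Then I would invoke $\mu$-strong monotonicity, $\inp*{Tx-Ty}{x-y} \ge \mu\snorm{x-y}$, and substitute the displacement lower bound into its right-hand side, obtaining
\begin{align*}
\inp*{Tx-Ty}{x-y} \;\ge\; \mu\snorm{x-y} \;\ge\; \frac{\mu}{L^{2}}\snorm{Tx-Ty}
\end{align*}
for all $x,y\in\mathcal{H}$, which is precisely the assertion that $T$ is $\tfrac{\mu}{L^{2}}$-cocoercive (equivalently, $\tfrac{\mu}{L^{2}}$-inverse strongly monotone). There is essentially no obstacle here; the only mild subtlety is using the Lipschitz inequality in the correct direction — Lipschitzness controls the output norm by the input norm, and what we need is exactly the resulting lower bound on the input norm in terms of the output norm to feed into the strong-monotonicity estimate. (As a sanity check, this is consistent with Prop.~\ref{c->l} and Prop.~\ref{u->r}: the resulting constant $C=\mu/L^{2}$ satisfies $\tfrac{1}{C}=L^{2}/\mu \ge L$ since $L\ge\mu$, as must hold for any $\mu$-strongly monotone $L$-Lipschitz operator.)
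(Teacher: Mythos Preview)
Your proof is correct and follows exactly the same approach as the paper: square the Lipschitz bound to get $\snorm{x-y}\ge \tfrac{1}{L^{2}}\snorm{Tx-Ty}$, then substitute into the strong-monotonicity inequality. Your additional remarks on single-valuedness and the consistency check are extra commentary not present in the paper, but the core argument is identical.
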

\begin{proof}
	A $\mu$-strongly monotone operator satisfies, $\mu \snorm{x-y} \leq \inp*{Tx-Tx}{x-y}$ and a $L$-Lipschitz operator satisfies, $\frac{1}{L^{2}}\snorm{Tx-Ty}\leq \snorm{x-y}$. Combining these together gives $\frac{\mu}{L^{2}}\snorm{Tx-Ty} \leq \inp*{Tx-Tx}{x-y}$.
\end{proof}
\begin{prop}\label{cr->u}
	If an operator $T : \mathcal{H} \to 2^{\mathcal{H}}$ is $C$-inverse strongly monotone ($C$-cocoercive) and $R$-inverse Lipschitz then it is $\frac{C}{R^{2}}$-strongly monotone.
\end{prop}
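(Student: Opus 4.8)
The plan is to chain the two defining inequalities directly, mirroring exactly the structure of the proof of Proposition \ref{ul->c}. The cocoercivity hypothesis gives a lower bound on the monotonicity pairing in terms of $\snorm{Tx-Ty}$, while the inverse Lipschitz hypothesis converts $\snorm{Tx-Ty}$ back into a bound on $\snorm{x-y}$; composing the two yields the strong monotonicity constant $C/R^{2}$.

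Concretely, first I would rewrite the $R$-inverse Lipschitz condition $\norm{x-y} \leq R\norm{Tx-Ty}$ by squaring both sides to obtain $\snorm{x-y} \leq R^{2}\snorm{Tx-Ty}$, equivalently $\snorm{Tx-Ty} \geq \tfrac{1}{R^{2}}\snorm{x-y}$ for all $x,y\in\mathcal{H}$. Next I would recall that $C$-cocoercivity states $\inp*{Tx-Ty}{x-y} \geq C\snorm{Tx-Ty}$. Substituting the first bound into the right-hand side of the second gives $\inp*{Tx-Ty}{x-y} \geq C\snorm{Tx-Ty} \geq \tfrac{C}{R^{2}}\snorm{x-y}$, which is precisely $\tfrac{C}{R^{2}}$-strong monotonicity. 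Since $C>0$ and $R>0$, the constant $C/R^{2}$ is strictly positive, so the conclusion genuinely lies in the strong-monotone class rather than merely the monotone one.

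There is essentially no obstacle here: the argument is a two-line inequality chain and uses no structural property of $\mathcal{H}$ beyond the Cauchy–Schwarz-free manipulation of norms. The only point worth a sentence of care is the monotonicity of squaring (both sides nonnegative), and noting that the cocoercivity constant $C$ being the one that survives means the bound is automatically consistent with $T$ being single-valued on its domain, as is implicit in the cocoercive hypothesis. This completes the final arrow $C \cap R \Rightarrow \mu$ in the diagram of Figure~3.
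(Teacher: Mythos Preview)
Your proposal is correct and follows essentially the same approach as the paper: both arguments simply chain the cocoercivity inequality $\inp*{Tx-Ty}{x-y} \geq C\snorm{Tx-Ty}$ with the squared inverse-Lipschitz bound $\snorm{Tx-Ty} \geq \tfrac{1}{R^{2}}\snorm{x-y}$ to obtain $\tfrac{C}{R^{2}}$-strong monotonicity.
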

\begin{proof}
	A $C$-inverse strongly monotone operator satisfies, $C\snorm{Tx-Ty} \leq \inp*{Tx-Tx}{x-y}$ and a $R$-inverse Lipschitz operator satisfies, $\frac{1}{R^{2}}\snorm{x-y}\leq \snorm{Tx-Ty}$. Combining these together gives $\frac{C}{R^{2}}\snorm{x-y} \leq \inp*{Tx-Tx}{x-y}$.
\end{proof}

The following Lemma is a useful property of operators under the $R$-inverse Lipschitz assumption.

\begin{prop} \label{lemma:resInvLip}
	Let $T : \mathcal{H} \to 2^{\mathcal{H}}$ be a maximally $\mu$-hypomonotone operator that is $R$-inverse Lipschitz. Then for any $\lambda \geq 0$ such that $\mu R^{2} \leq \lambda < \frac{1}{\mu}$, the following hold for the resolvent of $T$, $\mathcal{J}_{\lambda T} = (\Id + \lambda T)^{-1}$

\begin{enumerate}
	\item[(i)] $\mathcal{J}_{\lambda T}$ is maximally monotone.
	\item[(ii)] $\mathcal{J}_{\lambda T}$ is $L_{\mathcal{J}}$-Lipschitz, $\norm{\mathcal{J}_{\lambda T}x - \mathcal{J}_{\lambda T}y} \leq L_{\mathcal{J}}\norm{x-y}$ where, $L_{\mathcal{J}} \defeq \sqrt{\frac{R^{2}}{R^{2} + \lambda^{2} - 2\lambda \mu R^{2}}}$.
	\item[(iii)] $\inp*{x-y}{\mathcal{J}_{\lambda T}x - \mathcal{J}_{\lambda T}y} \leq \kappa_{\mathcal{J}} \snorm{x-y} $
	where,
	\begin{align*}
		\kappa_{\mathcal{J}} \defeq \begin{cases}
		\frac{R^{2}(1- \mu\lambda)}{R^{2} + \lambda^{2} - 2\mu\lambda R^{2}} & R\geq \lambda \\
		\frac{R^{2}(1+\frac{\lambda}{R})}{R^{2} + \lambda^{2} + 2\lambda R} & \lambda \geq R
		\end{cases}
	\end{align*} 
\end{enumerate}
\end{prop}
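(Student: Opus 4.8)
The plan is to work directly from the defining inclusion of the resolvent. Fix $x,y\in\mathcal H$ and pick $p\in\mathcal J_{\lambda T}x$, $q\in\mathcal J_{\lambda T}y$; then there are $u\in Tp$, $v\in Tq$ with $x=p+\lambda u$, $y=q+\lambda v$, so $x-y=(p-q)+\lambda(u-v)$. Write $a^{2}:=\snorm{p-q}$, $b^{2}:=\snorm{u-v}$, $c:=\langle p-q,\,u-v\rangle$. All three items follow by expanding $\snorm{x-y}=a^{2}+2\lambda c+\lambda^{2}b^{2}$ and $\langle x-y,\,p-q\rangle=a^{2}+\lambda c$, and invoking the two structural bounds on $T$: $\mu$-hypomonotonicity gives $c\ge-\mu a^{2}$, and $R$-inverse Lipschitz gives $b^{2}\ge a^{2}/R^{2}$; Cauchy–Schwarz adds $|c|\le ab$. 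Note at the outset that $\mu R^{2}\le\lambda<1/\mu$ forces $\mu R<1$, which keeps the relevant quadratics positive.

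For (i), I would first get monotonicity for free: $\langle x-y,\,p-q\rangle=a^{2}+\lambda c\ge(1-\lambda\mu)a^{2}\ge0$ since $c\ge-\mu a^{2}$ and $\lambda<1/\mu$. To upgrade to maximality, I would show $\mathcal J_{\lambda T}$ is single-valued, everywhere defined and continuous, whence it is maximally monotone (a monotone, continuous, full-domain operator is maximal; see \cite{monoBookv2}). Single-valuedness and continuity come from item (ii) (the Lipschitz estimate with $x=y$ forces $p=q$), and full domain follows from Minty's theorem applied to $T+\mu\,\Id$, which is maximally monotone by the ``maximally $\mu$-hypomonotone'' hypothesis: writing $\Id+\lambda T=(1-\lambda\mu)\bigl[\Id+\tfrac{\lambda}{1-\lambda\mu}(T+\mu\,\Id)\bigr]$ with $1-\lambda\mu>0$ shows $\operatorname{ran}(\Id+\lambda T)=\mathcal H$.

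For (ii), substitute the lower bounds $c\ge-\mu a^{2}$ and $b^{2}\ge a^{2}/R^{2}$ into the expansion of $\snorm{x-y}$; this is legitimate term-by-term since the $b^{2}$-term is nonnegative, regardless of the sign of $1-2\lambda\mu$, giving
\[ \snorm{x-y}\;\ge\;\Bigl(1-2\lambda\mu+\tfrac{\lambda^{2}}{R^{2}}\Bigr)\snorm{p-q}\;=\;\frac{R^{2}+\lambda^{2}-2\lambda\mu R^{2}}{R^{2}}\,\snorm{p-q}. \]
The quadratic $\lambda^{2}-2\mu R^{2}\lambda+R^{2}$ has discriminant $4R^{2}(\mu^{2}R^{2}-1)<0$, hence is strictly positive; rearranging and taking square roots yields $\norm{p-q}\le L_{\mathcal J}\norm{x-y}$ with $L_{\mathcal J}$ as stated. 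For (iii), the quantity to bound is $\langle x-y,\,p-q\rangle/\snorm{x-y}=(a^{2}+\lambda c)/(a^{2}+2\lambda c+\lambda^{2}b^{2})$ (the case $a=0$ is trivial, the numerator vanishing), and by scaling-invariance of the feasible set $\{(a,b,c): c\ge-\mu a^{2},\ b^{2}\ge a^{2}/R^{2},\ c^{2}\le a^{2}b^{2}\}$ I may normalize $a=1$. Since the numerator $1+\lambda c\ge1-\lambda\mu>0$ is independent of $b$ and the denominator increases in $b^{2}$, the supremum over $b$ is attained at $b^{2}=\max(1/R^{2},c^{2})$. When $b^{2}=1/R^{2}$ one gets $g(c)=(1+\lambda c)/(1+2\lambda c+\lambda^{2}/R^{2})$ with $g'(c)=\lambda(\lambda^{2}/R^{2}-1)/(\cdot)^{2}$, i.e.\ $g$ monotone in $c$ with the sign of $\lambda-R$; when $b^{2}=c^{2}$ the denominator collapses to $(1+\lambda c)^{2}$, the ratio is $1/(1+\lambda c)$, and on the forced range $c>1/R$ (from $c^{2}>1/R^{2}$ together with $c\ge-\mu>-1/R$) this is $\le R/(R+\lambda)$. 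Collecting: if $\lambda\le R$, $g$ is decreasing and the maximum over $c\in[-\mu,1/R]$ is $g(-\mu)=R^{2}(1-\mu\lambda)/(R^{2}+\lambda^{2}-2\mu\lambda R^{2})$, which dominates $R/(R+\lambda)$ because the difference has numerator $\lambda(R-\lambda)(1+\mu R)\ge0$; if $\lambda\ge R$, $g$ is increasing and $\sup g=1/(1+\lambda/R)=R/(R+\lambda)=R^{2}(1+\lambda/R)/(R^{2}+\lambda^{2}+2\lambda R)$ — the two branches of $\kappa_{\mathcal J}$.

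The main obstacle is the constrained maximization in (iii): carefully tracking how the Cauchy–Schwarz constraint $|c|\le ab$ interacts with the inverse-Lipschitz constraint $b\ge a/R$ to split the domain into the two sub-cases, and verifying that in each regime the correct candidate is the larger one (the inequality $\lambda(R-\lambda)(1+\mu R)\ge0$ being the crux). The computations for (i)–(ii) are short; one should only be mildly careful about degenerate configurations $\snorm{p-q}=0$, and about fixing the meaning of ``maximally $\mu$-hypomonotone'' as $T+\mu\,\Id$ maximally monotone, which is what makes Minty's theorem available in (i).
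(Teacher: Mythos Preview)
Your proof is correct. Parts (i) and (ii) mirror the paper's argument almost exactly (you are in fact more careful about maximality than the paper, which only invokes ``inverse of a monotone operator is monotone'' and single-valuedness). Part (iii), however, takes a genuinely different route.

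The paper proves (iii) by \emph{verification}: it posits the constant $\kappa_{\mathcal J}$, expands $\kappa_{\mathcal J}\snorm{x-y}$, adds and subtracts the target inner product, and is left with an expression $(\kappa_{\mathcal J}-1)\snorm{u-v}+\kappa_{\mathcal J}\lambda^{2}\snorm{Tu-Tv}+\lambda(2\kappa_{\mathcal J}-1)\langle Tu-Tv,u-v\rangle$ that must be shown nonnegative. The case split then arises from the sign of $2\kappa_{\mathcal J}-1$: when $R\ge\lambda$ it is nonnegative and one applies hypomonotonicity to the cross term and inverse-Lipschitz to the quadratic term; when $\lambda\ge R$ it is nonpositive and one uses Young's inequality $-\langle a,b\rangle\ge -\tfrac{1}{4s}\snorm{a}-s\snorm{b}$ with $s=R/2$ instead. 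In both branches the specific value of $\kappa_{\mathcal J}$ is exactly what makes the residual collapse to zero.

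Your approach instead \emph{derives} $\kappa_{\mathcal J}$ by posing the scale-invariant constrained maximization and reducing it to a one-variable problem via the observation that the optimal $b^{2}$ sits on the active constraint $b^{2}=\max(1/R^{2},c^{2})$. This buys you more: it certifies that $\kappa_{\mathcal J}$ is \emph{sharp}, and the case split $\lambda\lessgtr R$ emerges naturally as the sign of $g'(c)$ rather than as an imposed algebraic condition. The cost is having to check the boundary comparison $g(-\mu)\ge R/(R+\lambda)$ explicitly (your factorization $\lambda(R-\lambda)(1+\mu R)\ge0$), which the paper avoids by never needing to compare the two candidates.
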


\begin{proof}
	Found in the Appendix.
\end{proof}

\begin{remark}
	The Lipschitz constant from Lemma \ref{lemma:resInvLip} (ii) can upper bound the inner product $\inp*{x-y}{\mathcal{J}_{\lambda T}x - \mathcal{J}_{\lambda T}y}$, but Lemma \ref{lemma:resInvLip} (iii) provides a tighter bound.
\end{remark}

When $T$ is differentiable some sufficient conditions for these properties are given next.

\begin{prop} \label{lemma:funcType}
	Let $T$ be a differentiable operator and the Jacobian of $T$ be denoted $JT(x)$. Then $T$ is,
	\begin{enumerate}
		\item $\mu$-strongly monotone if: \null\hfill $\frac{1}{2}\bracket{JT(x) + JT(x)^{T}} \succeq \mu I$
		\item $C$-cocoercive if: \null\hfill $C JT(x)^{T}JT(x) \preceq JT$
		\item $L$-Lipschitz if: \null\hfill $JT(x)^{T}JT(x) \preceq L^{2} I$
		\item $R$-inverse Lipschitz if: \null\hfill $JT(x)^{T}JT(x) \succeq \frac{1}{R^{2}} I$
\end{enumerate}
\end{prop}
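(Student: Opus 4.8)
The plan is to reduce each of the four claims to a pointwise inequality on the Jacobian via the fundamental theorem of calculus along line segments. Since $\Omega_i = \reals^{n_i}$, the domain of $T$ is the convex set $\reals^n$, so for any $x,y$ the segment $s \mapsto z(s) := y + s(x-y)$, $s\in[0,1]$, lies in the domain and
\begin{align*}
	Tx - Ty \;=\; \bracket{\int_0^1 JT(z(s))\,ds}(x-y).
\end{align*}
For claim 1, pair this identity with $x-y$ and use that $v^{T}JT(z)v = v^{T}\tfrac12\bracket{JT(z)+JT(z)^{T}}v \ge \mu\snorm{v}$ pointwise; integrating over $s$ gives $\inp*{Tx-Ty}{x-y} \ge \mu\snorm{x-y}$. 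For claim 3, note that $JT(z)^{T}JT(z)\preceq L^{2}I$ is exactly $\norm{JT(z)v}\le L\norm{v}$ for all $v$, i.e. the spectral norm of $JT(z)$ is at most $L$; then $\norm{Tx-Ty} \le \int_0^1\norm{JT(z(s))(x-y)}\,ds \le L\norm{x-y}$.

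Claim 2 needs one extra ingredient. Reading $C\,JT(z)^{T}JT(z)\preceq JT(z)$ as a bound on the symmetric part of $JT(z)$, it gives $v^{T}JT(z)v \ge C\norm{JT(z)v}^{2}$ pointwise. Applying this along the segment and then Jensen's inequality for the convex map $w\mapsto\norm{w}^{2}$ (so that $\int_0^1\norm{f(s)}^{2}\,ds \ge \norm{\int_0^1 f(s)\,ds}^{2}$ with $f(s)=JT(z(s))(x-y)$) yields
\begin{align*}
	\inp*{Tx-Ty}{x-y} \;\ge\; C\int_0^1\norm{JT(z(s))(x-y)}^{2}\,ds \;\ge\; C\snorm{Tx-Ty},
\end{align*}
which is cocoercivity.

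Claim 4 is the delicate one, and I expect it to be the main obstacle. The hypothesis $JT(z)^{T}JT(z)\succeq\tfrac1{R^{2}}I$ says $\sigma_{\min}(JT(z))\ge 1/R$ for every $z$, equivalently each $JT(z)$ is invertible with $\norm{JT(z)^{-1}}\le R$. The naive segment argument does \emph{not} work here: from $\norm{JT(z)(x-y)}\ge\tfrac1R\norm{x-y}$ one only gets $\int_0^1\norm{JT(z(s))(x-y)}\,ds\ge\tfrac1R\norm{x-y}$, and this integral \emph{upper}-bounds $\norm{Tx-Ty}$, so neither Jensen nor the triangle inequality closes the gap. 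The right approach is to pass to the inverse map globally: show $T$ is a $C^{1}$ bijection of $\reals^{n}$ (it is a local diffeomorphism everywhere by the inverse function theorem, and the uniform lower bound on $\sigma_{\min}(JT)$ is what one uses to force properness; cf. the Hadamard--L\'evy global inversion theorem), whence $J(T^{-1})(u)=JT(T^{-1}u)^{-1}$ has spectral norm $\le R$, and claim 3 applied to $T^{-1}$ gives $\norm{T^{-1}u-T^{-1}v}\le R\norm{u-v}$, i.e. $\norm{x-y}\le R\norm{Tx-Ty}$. The crux is precisely the injectivity/properness of $T$: the pointwise Jacobian bound alone does not visibly prevent the image of a segment from folding back on itself, so this step must be argued with care, or supported by the monotonicity of $T$ present in the intended applications (where surjectivity follows from coercivity and injectivity from the Jacobian bound, making the diffeomorphism conclusion — and hence the inverse-Lipschitz estimate — routine).
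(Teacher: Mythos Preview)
Your arguments for (1), (2), (3) are correct and match the paper's approach: the paper cites external references for (1) and (2) and does the same segment-integral estimate for (3); your Jensen step in (2) is a clean explicit version of what the reference supplies.

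For (4) you take a genuinely different --- and more careful --- route than the paper. The paper argues directly that
\[
\snorm{\bracket{\int_0^1 JT(z(t))\,dt}(y-x)} \;\geq\; \min_z (y-x)^{T}JT(z)^{T}JT(z)(y-x)\;\geq\; \tfrac{1}{R^{2}}\snorm{y-x},
\]
i.e.\ exactly the ``naive segment argument'' you correctly flag as unjustified: an average of matrices each having $\sigma_{\min}\geq 1/R$ need not itself satisfy $\sigma_{\min}\geq 1/R$ (two rotations can average to a singular matrix), so the first inequality above is not generally valid. Your approach --- use the uniform bound $\norm{JT(x)^{-1}}\leq R$ together with the Hadamard--L\'evy global inversion theorem to conclude that $T:\reals^{n}\to\reals^{n}$ is a $C^{1}$ diffeomorphism, and then apply part (3) to $T^{-1}$ --- is the rigorous way to obtain the inverse-Lipschitz bound. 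What it buys is an actual proof on $\reals^{n}$; the paper's one-line estimate is shorter but leaves precisely the gap you identify. Your caveat about injectivity/properness is also on point: on a proper convex subdomain the pointwise hypothesis $JT^{T}JT\succeq R^{-2}I$ does \emph{not} force inverse Lipschitz (the real form of $z\mapsto e^{z}$ on a half-plane is a counterexample), so the global-inversion step really is doing work here.
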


\begin{proof}
	Found in the Appendix
\end{proof}

\subsection{Examples}

\begin{exmp}
	The operator $Tx = \begin{bmatrix}
	0 & -1 \\
	1 & 0
	\end{bmatrix}x$ is monotone but is not strongly monotone nor cocoercive. It is $1$-Lipschitz and $1$-inverse Lipschitz.
\end{exmp}
\begin{exmp}
	The operator $T : [-1,1] \to [-1,1]$, $Tx = x^{3}$ is $\frac{1}{3}$-cocoercive and is not strongly monotone nor inverse Lipschitz.
\end{exmp}
\begin{exmp}
	The operator $Tx = \begin{bmatrix}
	2 & 1 \\
	-1 & 3
	\end{bmatrix}x$ is $2$-strongly monotone, $\sqrt{\frac{15 + \sqrt{29}}{2}}$-Lipschitz, $\frac{1}{15+\sqrt{29}}$-cocoercive and $\sqrt{\frac{15 - \sqrt{29}}{2}}$-inverse Lipschitz
\end{exmp}
\begin{exmp}
	The operator $T: [0,\infty) \to [1,\infty)$, $Tx = e^{x}$ is $1$-strongly monotone and $1$-inverse Lipschitz, but is not cocoercive nor Lipschitz.
\end{exmp}
\begin{exmp}
	The operator $Tx = sin(x)$ is $1$-Lipschitz and is not strongly monotone, cocoercive or inverse Lipschitz.
\end{exmp}
\begin{exmp}
	The operator $T: (0,1) \to (0,\infty)$, $Tx = \frac{1}{x}$ is $1$-inverse Lipschitz and is not strongly monotone, cocoercive or Lipschitz.
\end{exmp}

\section{Convergence under Partial Information} \label{sec:partialConv}

We will now show that \eqref{eqn:distProposedAlg} converges to the NE when the monotonicity of the extended pseudo-gradient, Assumption \ref{asmp:extendMono}, is replaced by a weaker assumption only on the pseudo-gradient.
\begin{asmp}\label{asmp:inverse}
	The pseudo-gradient $F$ is $L_F$-Lipschitz, $R$-inverse Lipschitz, and $\mu$-hypomonotone, i.e., $\inp*{Fx-Fy}{x-y}\geq -\mu \snorm{x-y}$.
\end{asmp}
\begin{remark}
	Note that $F$ may not be monotone. For example, $F(x) = \begin{bmatrix}
	-1 & 1 \\ -1 & -1 \end{bmatrix}$ is $1$-hypomonotone, $\sqrt{2}$-Lipschitz and $\sqrt{2}$-inverse Lipschitz.
\end{remark}

When the extended monotonicity property (Assumption  \ref{asmp:extendMono}) does not hold, we use Assumption \ref{asmp:inverse}
and  take advantage of properties of the dynamics on the augmented consensus subspace and its orthogonal complement.  Our idea is to use a change of coordinates and in these coordinates show that, under Assumption \ref{asmp:monotone} and \ref{asmp:inverse}, the dynamics restricted to the consensus subspace satisfies a property similar to strict EIP for $\alpha$ parameters selected in a certain range (Lemma \ref{lemma:consTTS}). Then, for the overall dynamics,  we exploit this property together with the excess passivity of the Laplacian to balance the coupling terms  off the consensus subspace and show that \eqref{eqn:distProposedAlg} converges to a Nash Equilibrium (Theorem \ref{thm:distMono}).

We  first decompose the system into consensus and orthogonal component dynamics. Let $\mathbf{x}$ and $\mathbf{r}$ be decomposed into consensus and orthogonal components. i.e.,
\begin{align*}
	\mathbf{x} &= \mathbf{x}^{\orthc} + \mathbf{x}^{\orth}, & \mathbf{x}^{\orthc} &= \Pi_{\orthc}\mathbf{x}, & \mathbf{x}^{\orth} &= \mathbf{x} - \mathbf{x}^{\orthc}\\
	\mathbf{r} &= \mathbf{r}^{\orthc} + \mathbf{r}^{\orth}, & \mathbf{r}^{\orthc} &= \Pi_{\orthc}\mathbf{r}, & \mathbf{r}^{\orth} &= \mathbf{r} - \mathbf{r}^{\orthc}
\end{align*}
where $\Pi_{\orthc} = \frac{1}{N}(\mathbf{1}_{N}\otimes\mathbf{1}_{N}^{T}\otimes I_{n})$ and $\Pi_{\orth} = I_{Nn} - \Pi_{\orthc}$, $\mathbf{x}^{\orthc}=\mathbf{1}_{N}\otimes x$, $\mathbf{r}^{\orthc}=\mathbf{1}_{N}\otimes r$, for some $x,r\in \reals^n$. The overall dynamics \eqref{eqn:distProposedAlg} can be decomposed into the (augmented) consensus component dynamics,
\begin{align}
\begin{split}
	\mathbf{\dot{r}^{\orthc}} &= \alpha\bracket{\mathbf{x}^{\orthc} - \mathbf{r}^{\orthc}} \\
	\mathbf{\dot{x}^{\orthc}} &= -\frac{1}{N}\mathbf{1}_{N}\otimes\mathbf{F}\bracket{\mathbf{x}^{\orthc} + \mathbf{x}^{\orth}} - \beta\bracket{\mathbf{x}^{\orthc} - \mathbf{r}^{\orthc}}
\end{split} \label{eqn:consInter}
\end{align}
and the orthogonal component dynamics,
\begin{align} \label{eqn:orthDyn}
\begin{split}
	\mathbf{\dot{r}^{\orth}} &= \alpha\bracket{\mathbf{x}^{\orth} - \mathbf{r}^{\orth}} \\
	\mathbf{\dot{x}^{\orth}} &= -\Pi_{\orth}\mathcal{R}^{T}\mathbf{F}\bracket{\mathbf{x}^{\orthc} + \mathbf{x}^{\orth}} - \beta\bracket{\mathbf{x}^{\orth} - \mathbf{r}^{\orth}} \\
	&\qquad - c\mathbf{L}\mathbf{x}^{\orth}
\end{split}
\end{align}
which are coupled one to another via $\mathbf{x}^{\orth}$ and $\mathbf{x}^{\orthc}$. 

Let the change of variables $\mathbf{z}^{\orthc} := \mathbf{x}^{\orthc} - \mathbf{h}(\mathbf{r}^{\orthc})$ where 
\begin{align}\label{h_def}
\mathbf{h}(\mathbf{r}^{\orthc}) := \mathcal{J}_{\frac{1}{\beta N}\mathbf{1}_{N}\otimes\mathbf{F}}(\mathbf{r}^{\orthc}) = \left (\text{Id} + \frac{1}{\beta N}\mathbf{1}_{N}\otimes\mathbf{F} \right )^{-1}(\mathbf{r}^{\orthc})
\end{align}
is the resolvent of $\mathbf{1}_{N}\otimes\mathbf{F}$ on the consensus subspace. Then from \eqref{eqn:consInter}, it follows that 
\begin{align} \label{eqn:consDyn}
\mathbf{\dot{r}^{\orthc}} &= \alpha\bracket{\mathbf{z}^{\orthc} + \mathbf{h}(\mathbf{r}^{\orthc}) - \mathbf{r}^{\orthc}} \\
\mathbf{\dot{z}^{\orthc}} &= -\frac{1}{N}\mathbf{1}_{N}\otimes\mathbf{F}\bracket{\mathbf{z}^{\orthc} + \mathbf{h}(\mathbf{r}^{\orthc})+ \mathbf{x}^{\orth}} \notag \\
&\qquad - \bracket{\beta + \alpha \frac{\partial \mathbf{h}}{\partial \mathbf{r}}} \bracket{\mathbf{z}^{\orthc} + \mathbf{h}(\mathbf{r}^{\orthc}) - \mathbf{r}^{\orthc}}  \notag
\end{align}
Therefore, the dynamics \eqref{eqn:distProposedAlg} can be equivalently represented as \eqref{eqn:orthDyn} and \eqref{eqn:consDyn}. Note that an equilibrium point for these dynamics is $(\mathbf{\bar{z}^{\orthc}},\mathbf{\bar{r}^{\orthc}},\mathbf{\bar{x}^{\orth}},\mathbf{\bar{r}^{\orth}}) = (\mathbf{0}_{Nn}, \mathbf{1}_{N}\otimes x^{*}, \mathbf{0}_{Nn}, \mathbf{0}_{Nn})$,  where  $F(x^*)=0$ ($x^*$ is a NE), cf. Lemma \ref{lemma:distEQ}.

Consider the dynamics \eqref{eqn:consDyn} restricted to the consensus subspace, i.e., when $\mathbf{x}^{\orth} = \mathbf{0}$, which is given as 
\begin{align} \label{eqn:consDynFI}
\mathbf{\dot{r}^{\orthc}} &= \alpha \bracket{\mathbf{z}^{\orthc} + \mathbf{h}(\mathbf{r}^{\orthc}) - \mathbf{r}^{\orthc}}\\
	\mathbf{\dot{z}^{\orthc}} &= -\frac{1}{N}\mathbf{1}_{N}\otimes\mathbf{F}\bracket{\mathbf{z}^{\orthc} + \mathbf{h}(\mathbf{r}^{\orthc})} \notag \\ 
	&\qquad - \bracket{\beta + \alpha \frac{\partial \mathbf{h}}{\partial \mathbf{r}}} \bracket{\mathbf{z}^{\orthc} + \mathbf{h}(\mathbf{r}^{\orthc}) - \mathbf{r}^{\orthc}}  \notag
\end{align}

\begin{lemma} \label{lemma:consTTS}
Consider \eqref{eqn:consDynFI}, under Assumption \ref{asmp:Jsmooth} and \ref{asmp:inverse}. For any $0<d < 1$,  let $\beta \in \bracket{\frac{\mu}{N}, \frac{1}{\mu N R^{2}}}$ and
\begin{align*}
	0< \alpha < \frac{4 d(1-d)(\beta-\frac{\mu}{N})(1-{\kappa_{\mathcal{J}}})}{\left((1-d)+ d(L_{\mathcal{J}} + L^2_{\mathcal{J}})\right )^2}
\end{align*}
where $\kappa_{\mathcal{J}}$ and $L_{\mathcal{J}}$ are obtained from Lemma \ref{lemma:resInvLip} for the pseudo-gradient $F$ and $\lambda = \frac{1}{\beta N}$. Let,
\begin{align*}
V^{\orthc}(\mathbf{z}^{\orthc}, \mathbf{r}^{\orthc}) = \frac{1-d}{2}\snorm{\mathbf{r}^{\orthc} - \mathbf{\bar{r}}^{\orthc}} + \frac{d}{2}\snorm{\mathbf{z}^{\orthc}}
\end{align*}
where $\mathbf{\bar{r}}^{\orthc} = \mathbf{1}\otimes x^{*}$. Then, along any solution of \eqref{eqn:consDynFI}, $\dot{V}^{\orthc} \leq -\varpi^{T} \Phi\varpi$ where $\varpi = \bracket{\norm{\mathbf{r}^{\orthc} - \mathbf{\bar{r}}^{\orthc}}, \norm{\mathbf{z}^{\orthc}}}$,
\begin{align} \label{eqn:lyapNSD}
	\Phi &= \begin{bmatrix}
	(1-d)\alpha(1-\kappa_{\mathcal{J}}) & -\frac{\alpha +\alpha(L_{\mathcal{J}}^{2}+L_{\mathcal{J}}-1)d}{2} \\
	-\frac{\alpha + \alpha(L_{\mathcal{J}}^{2}+L_{\mathcal{J}}-1)d}{2} & d\bracket{\beta - \frac{\mu}{N}}
	\end{bmatrix}
\end{align}
and the matrix $\Phi$ is positive definite. 
\end{lemma}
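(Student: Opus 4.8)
The plan is to compute $\dot V^{\orthc}$ along \eqref{eqn:consDynFI}, bound each contribution so that the whole expression collapses to $-\varpi^{T}\Phi\varpi$, and then verify $\Phi\succ0$. Three preliminary facts drive everything. First, on the consensus subspace $\mathbf h$ acts as $\mathbf 1_N\otimes\mathcal{J}_{\lambda F}$ with $\lambda=\tfrac1{\beta N}$, because $\mathbf F(\mathbf 1_N\otimes x)=F(x)$; consequently $\mathbf h(\mathbf{\bar r}^{\orthc})=\mathbf{\bar r}^{\orthc}$ since $\mathcal{J}_{\lambda F}(x^{*})=x^{*}\iff F(x^{*})=0$. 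Second, the hypothesis $\beta\in(\tfrac\mu N,\tfrac1{\mu N R^{2}})$ is exactly $\lambda\in(\mu R^{2},\tfrac1\mu)$, so Lemma \ref{lemma:resInvLip} applies to $F$ with this $\lambda$, and from its explicit formula one checks $\kappa_{\mathcal{J}}<1$ (using $\lambda>\mu R^{2}$). Third, because of the $\mathbf 1_N\otimes(\cdot)$ structure, inner products and norms on the consensus subspace equal those of the reduced $\reals^{n}$ representation scaled by $N$, so every bound from Lemma \ref{lemma:resInvLip} transfers with the same constants.

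For the $\mathbf{\dot r}^{\orthc}$ term I would write $\mathbf h(\mathbf r^{\orthc})-\mathbf r^{\orthc}=[\mathbf h(\mathbf r^{\orthc})-\mathbf h(\mathbf{\bar r}^{\orthc})]-[\mathbf r^{\orthc}-\mathbf{\bar r}^{\orthc}]$, so that $\inp*{\mathbf r^{\orthc}-\mathbf{\bar r}^{\orthc}}{\mathbf x^{\orthc}-\mathbf r^{\orthc}}$ splits into $\inp*{\mathbf r^{\orthc}-\mathbf{\bar r}^{\orthc}}{\mathbf z^{\orthc}}$, a middle inner product bounded by $\kappa_{\mathcal{J}}\snorm{\mathbf r^{\orthc}-\mathbf{\bar r}^{\orthc}}$ via Lemma \ref{lemma:resInvLip}(iii), and $-\snorm{\mathbf r^{\orthc}-\mathbf{\bar r}^{\orthc}}$. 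This produces the diagonal entry $-(1-d)\alpha(1-\kappa_{\mathcal{J}})$ and a cross term $(1-d)\alpha\norm{\mathbf r^{\orthc}-\mathbf{\bar r}^{\orthc}}\norm{\mathbf z^{\orthc}}$. The $\mathbf{\dot z}^{\orthc}$ term is where the real work is: the key identity is the resolvent relation $\beta(\mathbf h(\mathbf r^{\orthc})-\mathbf r^{\orthc})=-\tfrac1N\mathbf 1_N\otimes\mathbf F(\mathbf h(\mathbf r^{\orthc}))$, which lets the gradient term $\tfrac1N\mathbf 1_N\otimes\mathbf F(\mathbf x^{\orthc})$ and $\beta(\mathbf x^{\orthc}-\mathbf r^{\orthc})$ combine into $\tfrac1N\mathbf 1_N\otimes[\mathbf F(\mathbf x^{\orthc})-\mathbf F(\mathbf h(\mathbf r^{\orthc}))]+\beta\mathbf z^{\orthc}$; pairing with $\mathbf z^{\orthc}$ and invoking $\mu$-hypomonotonicity of $F$ (the $\tfrac1N$ times the $N$ from the block structure giving $\tfrac\mu N$) yields the diagonal entry $-d(\beta-\tfrac\mu N)$. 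For the leftover $\alpha\frac{\partial\mathbf h}{\partial\mathbf r}$ piece I would split $\mathbf x^{\orthc}-\mathbf r^{\orthc}$ once more as above: since $\mathcal{J}_{\lambda F}$ is monotone (Lemma \ref{lemma:resInvLip}(i)) and differentiable, $\frac{\partial\mathbf h}{\partial\mathbf r}$ has positive-semidefinite symmetric part, so its $\mathbf z^{\orthc}$ self-term is $\le0$ and is dropped, while $\norm{\frac{\partial\mathbf h}{\partial\mathbf r}}\le L_{\mathcal{J}}$ together with the $L_{\mathcal{J}}$-Lipschitz bound on $\mathbf h$ turn the two mixed terms into $d\alpha(L_{\mathcal{J}}^{2}+L_{\mathcal{J}})\norm{\mathbf r^{\orthc}-\mathbf{\bar r}^{\orthc}}\norm{\mathbf z^{\orthc}}$.

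Adding the pieces, the total cross coefficient $(1-d)\alpha+d\alpha(L_{\mathcal{J}}^{2}+L_{\mathcal{J}})$ equals $\alpha+\alpha(L_{\mathcal{J}}^{2}+L_{\mathcal{J}}-1)d$, which split evenly across the off-diagonal gives exactly $\Phi$ in \eqref{eqn:lyapNSD} and $\dot V^{\orthc}\le-\varpi^{T}\Phi\varpi$. Positive definiteness then needs only $\Phi_{11}=(1-d)\alpha(1-\kappa_{\mathcal{J}})>0$ (immediate from $d<1$, $\kappa_{\mathcal{J}}<1$) and $\det\Phi>0$; after dividing by $\alpha>0$ the latter is precisely the stated bound $\alpha<\frac{4d(1-d)(\beta-\frac\mu N)(1-\kappa_{\mathcal{J}})}{((1-d)+d(L_{\mathcal{J}}+L_{\mathcal{J}}^{2}))^{2}}$, using $\beta-\tfrac\mu N>0$ from the assumed range of $\beta$.

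The main obstacle I anticipate is the bookkeeping in the $\mathbf{\dot z}^{\orthc}$ term: correctly extracting the resolvent identity that makes the $\mathbf F$-term and the $\beta$-term combine, tracking the consensus-reduction scalings carefully enough that the Lemma \ref{lemma:resInvLip} constants survive unchanged, and justifying the positive-semidefinite-symmetric-part property of $\frac{\partial\mathbf h}{\partial\mathbf r}$, which uses part (i) of Lemma \ref{lemma:resInvLip} and invertibility of $\mathrm{Id}+\lambda JF$ for $\lambda<1/\mu$. Everything else is routine algebra plus a $2\times2$ Schur-complement check.
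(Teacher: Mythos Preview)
Your proposal is correct and follows essentially the same route as the paper's proof: the same decomposition of $\inp*{\mathbf r^{\orthc}-\mathbf{\bar r}^{\orthc}}{\mathbf{\dot r}^{\orthc}}$ using $\mathbf h(\mathbf{\bar r}^{\orthc})=\mathbf{\bar r}^{\orthc}$ and Lemma \ref{lemma:resInvLip}(iii), the same resolvent identity $\mathbf r^{\orthc}=(\text{Id}+\tfrac{1}{\beta N}\mathbf 1_N\otimes\mathbf F)\mathbf h(\mathbf r^{\orthc})$ to merge the $\mathbf F$ and $\beta$ terms in $\inp*{\mathbf z^{\orthc}}{\mathbf{\dot z}^{\orthc}}$, the same use of hypomonotonicity for the $-(\beta-\tfrac{\mu}{N})$ diagonal, and the same handling of the $\alpha\frac{\partial\mathbf h}{\partial\mathbf r}$ piece via monotonicity of $\mathbf h$ and the $L_{\mathcal J}$ Lipschitz bounds. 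Your explicit verification that $\kappa_{\mathcal J}<1$ follows from $\lambda>\mu R^{2}$ is a detail the paper leaves implicit.
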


Using this Lemma we can show that \eqref{eqn:proposedAlg}, in the full information case, converges for hypomonotone games instead of just monotone.

\begin{lemma} \label{lemma:full_hypo}
Consider \eqref{eqn:proposedAlg}, under Assumption \ref{asmp:Jsmooth} and \ref{asmp:inverse}. For any $0<d < 1$,  let $\beta \in \bracket{\mu, \frac{1}{\mu R^{2}}}$ and
\begin{align*}
	0< \alpha < \frac{4 d(1-d)(\beta-\frac{\mu}{N})(1-{\kappa_{\mathcal{J}}})}{\left((1-d)+ d(L_{\mathcal{J}} + L^2_{\mathcal{J}})\right )^2}
\end{align*}
where $\kappa_{\mathcal{J}}$ and $L_{\mathcal{J}}$ are obtained from Lemma \ref{lemma:resInvLip} for the pseudo-gradient $F$ and $\lambda = \frac{1}{\beta }$. Then, the dynamics \eqref{eqn:proposedAlg} globally converge to a NE $x^{*}$. 
\end{lemma}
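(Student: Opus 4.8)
The plan is to recognize \eqref{eqn:proposedAlg} as the $N=1$ degenerate instance of the partial-information machinery, so that Lemma \ref{lemma:consTTS} applies essentially verbatim. When $N=1$ there are no neighbours and nothing to estimate: the consensus subspace is all of $\reals^n$, its orthogonal complement is trivial, $\mathbf{x}^{\orth}=\mathbf{0}$, $\mathbf{L}=0$, and $\mathbf{F}(\mathbf{1}_1\otimes x)=F(x)$, so \eqref{eqn:distProposedAlg} (equivalently \eqref{eqn:consInter}) collapses to exactly \eqref{eqn:proposedAlg}. Correspondingly, the change of variables $z:=x-h(r)$ with $h(r):=\mathcal{J}_{\frac{1}{\beta}F}(r)=(\Id+\tfrac1\beta F)^{-1}(r)$ --- the $N=1$ specialization of \eqref{h_def} with $\lambda=\tfrac1\beta$ --- turns \eqref{eqn:proposedAlg} into precisely the system \eqref{eqn:consDynFI} with $N=1$.

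Before invoking the lemma I would check that this change of coordinates is legitimate. Under Assumption \ref{asmp:Jsmooth}, $F$ is continuous with full domain and $F+\mu\,\Id$ is monotone, hence $F$ is maximally $\mu$-hypomonotone; since $\beta>\mu$, the operator $\Id+\tfrac1\beta F$ is $(1-\tfrac\mu\beta)$-strongly monotone and therefore a bijection of $\reals^n$, so $h$ is well defined and single-valued. Moreover $\Id+\tfrac1\beta F$ is $\mathcal C^1$ with Jacobian $I+\tfrac1\beta JF$ having positive-definite symmetric part everywhere, so by the inverse function theorem $h\in\mathcal C^1$ with $\tfrac{\partial h}{\partial r}=(I+\tfrac1\beta JF\circ h)^{-1}$; this makes the term $\beta+\alpha\tfrac{\partial h}{\partial r}$ appearing in \eqref{eqn:consDynFI} meaningful. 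Finally $\beta\le\tfrac1{\mu R^2}$ gives $\lambda=\tfrac1\beta\in[\mu R^2,\tfrac1\mu)$, so Proposition \ref{lemma:resInvLip} applies to $F$ and delivers the constants $L_{\mathcal J}$ and $\kappa_{\mathcal J}$ that enter the admissible range for $\alpha$; note also that $R$-inverse Lipschitz forces the NE $x^*$ to be unique.

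With these pieces in place, Lemma \ref{lemma:consTTS} applied with $N=1$ yields, along solutions of \eqref{eqn:proposedAlg} written in the $(z,r)$ coordinates, that $V^{\orthc}(z,r)=\tfrac{1-d}{2}\snorm{r-x^*}+\tfrac d2\snorm{z}$ satisfies $\dot V^{\orthc}\le-\varpi^{T}\Phi\varpi\le-\lambda_{\min}(\Phi)\bracket{\snorm{z}+\snorm{r-x^*}}$ with $\Phi\succ0$, where $\bar r=x^*$, $\bar z=0$ (recall any equilibrium of \eqref{eqn:proposedAlg} is $(x^*,x^*)$ with $F(x^*)=0$, Lemma \ref{eq_FI}). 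Since $V^{\orthc}$ is positive definite and radially unbounded in $(z,r-x^*)$ while $\dot V^{\orthc}$ is negative definite, the equilibrium $(z,r)=(0,x^*)$ is globally asymptotically stable --- no LaSalle argument is needed here, unlike in Theorem \ref{thm:EIPconvergence}, because the decrease is strict. The map $(x,r)\mapsto(z,r)=(x-h(r),r)$ is a global $\mathcal C^1$ diffeomorphism with inverse $(z,r)\mapsto(z+h(r),r)$, so global asymptotic stability transfers back: $r(t)\to x^*$ and $x(t)=z(t)+h(r(t))\to h(x^*)=x^*$, the last equality because $F(x^*)=0$ forces $(\Id+\tfrac1\beta F)(x^*)=x^*$. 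Hence \eqref{eqn:proposedAlg} converges globally to the NE $x^*$.

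The bulk of the difficulty is absorbed into Lemma \ref{lemma:consTTS} (the construction of $\Phi$ and the verification that the stated $\alpha$-range makes $\Phi\succ0$). What remains delicate here is purely the reduction: confirming that $h$ is globally defined and $\mathcal C^1$ so that \eqref{eqn:consDynFI} is a bona fide reformulation of \eqref{eqn:proposedAlg}, and checking that the $N=1$ specialization of the hypotheses of Lemma \ref{lemma:consTTS} coincides with the stated bounds $\beta\in(\mu,\tfrac1{\mu R^2})$ and the displayed range for $\alpha$ (with $\lambda=\tfrac1\beta$). This is routine, but it is the only place where $\beta>\mu$ --- rather than merely $\beta>0$ as in the monotone case of Theorem \ref{thm:EIPconvergence} --- is genuinely exploited, namely to make the anchoring resolvent $h$ well posed.
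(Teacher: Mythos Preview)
Your proposal is correct and follows essentially the same route as the paper: perform the change of coordinates $z=x-h(r)$ with $h=\mathcal{J}_{\frac{1}{\beta}F}$, observe that \eqref{eqn:proposedAlg} becomes structurally identical to \eqref{eqn:consDynFI}, and invoke the Lyapunov argument of Lemma~\ref{lemma:consTTS}. The paper's own proof is a two-line sketch saying exactly this (``do the change of coordinates\ldots\ following the same argument as Lemma~\ref{lemma:consTTS}''), whereas you additionally verify that $h$ is globally defined, single-valued and $\mathcal{C}^1$, that the coordinate change is a diffeomorphism, and that strict decrease of $V^{\orthc}$ avoids LaSalle --- all of which are legitimate and add rigor the paper omits.

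One cosmetic remark: your ``$N=1$'' framing is a convenient mnemonic for matching the constants, but strictly speaking $N$ in the paper is the number of players in the game, which is still arbitrary in the full-information setting; the reduction is really ``replace $\tfrac{1}{N}\mathbf{1}_N\otimes\mathbf{F}$ by $F$ and $\tfrac{1}{\beta N}$ by $\tfrac{1}{\beta}$,'' which is exactly how the paper phrases it. Your substitution recovers the right bounds on $\beta$ and $\alpha$ (modulo the apparent $\tfrac{\mu}{N}$ vs.\ $\mu$ typo in the displayed $\alpha$-bound of the lemma statement itself), so nothing is lost.
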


Next, we now show that \eqref{eqn:distProposedAlg} converges to a NE in the partial information case.

\begin{thm} \label{thm:distMono}
Consider a game $\mathcal{G}(\mathcal{N},J_i,\Omega_i)$ over a communication graph $G_c = (\mathcal{N},\mathcal{E})$, under Assumption \ref{asmp:Jsmooth}, \ref{asmp:graph} and \ref{asmp:inverse}. Let the overall dynamics of the agents be given by \eqref{eqn:distProposedAlg} or, equivalently, \eqref{eqn:orthDyn} and \eqref{eqn:consDyn}. Given any  $0<d <  1$, set $\alpha, \beta$ to satisfy the conditions in Lemma \ref{lemma:consTTS}. Set $c$ such that, 
\begin{align} \label{eqn:graphGameCondition}
	c\lambda_{2}(L) > \frac{\eta_{1} + \eta_{2}}{4 det(\Phi)}L_F^{2} + L_F
\end{align} 
where $\Phi$ is defined in \eqref{eqn:lyapNSD} and
\begin{align*}
\eta_{1} &= \alpha(1-d)(1-{\kappa_{\mathcal{J}}})\bracket{1+\frac{d}{\sqrt{N}}}^{2} + d\bracket{\beta - \frac{\mu}{N}}L_{\mathcal{J}}^{2}  \\
\eta_{2} &= \alpha \bracket{1 + [L_{\mathcal{J}}^{2} + L_{\mathcal{J}} - 1]d}\bracket{1+\frac{d}{\sqrt{N}}}L_{\mathcal{J}}
\end{align*}
Then, the dynamics \eqref{eqn:distProposedAlg} globally converges to a NE $x^*$.
\end{thm}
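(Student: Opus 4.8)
\emph{Proof strategy.} The plan is to stitch the consensus-subspace estimate of Lemma~\ref{lemma:consTTS} together with an EIP-type estimate for the orthogonal dynamics, and then let the excess passivity of the Laplacian feedback dominate the coupling between the two. I would work in the coordinates of \eqref{eqn:orthDyn} and \eqref{eqn:consDyn}, whose joint equilibrium is $(\mathbf{z}^{\orthc},\mathbf{r}^{\orthc},\mathbf{x}^{\orth},\mathbf{r}^{\orth})=(\mathbf{0},\mathbf{1}_{N}\otimes x^{*},\mathbf{0},\mathbf{0})$ with $F(x^{*})=0$ (cf.\ Lemma~\ref{lemma:distEQ}), and use the composite Lyapunov function $V=V^{\orthc}+V^{\orth}$, where $V^{\orthc}$ is the function of Lemma~\ref{lemma:consTTS} (taken with $\lambda=\tfrac{1}{\beta N}$, so that $L_{\mathcal{J}},\kappa_{\mathcal{J}}$ and $\Phi$ are exactly as there) and $V^{\orth}:=\tfrac12\snorm{\mathbf{x}^{\orth}}+\tfrac{\beta}{2\alpha}\snorm{\mathbf{r}^{\orth}}$. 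Since $\mathbf{h}$ is $L_{\mathcal{J}}$-Lipschitz (Lemma~\ref{lemma:resInvLip}(ii)) and $\mathbf{x}^{\orthc}=\mathbf{z}^{\orthc}+\mathbf{h}(\mathbf{r}^{\orthc})$, the change of coordinates is a homeomorphism and $V$ is positive definite and radially unbounded on the original state space.

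For the orthogonal part, differentiating $V^{\orth}$ along \eqref{eqn:orthDyn} and cancelling the filter terms as in Lemma~\ref{lemma:eipSub2} gives $\dot V^{\orth}=-\beta\snorm{\mathbf{x}^{\orth}-\mathbf{r}^{\orth}}-c\inp*{\mathbf{x}^{\orth}}{\mathbf{L}\mathbf{x}^{\orth}}-\inp*{\mathbf{x}^{\orth}}{\Pi_{\orth}\mathcal{R}^{T}\mathbf{F}(\mathbf{x})}$. I would then use $\inp*{\mathbf{x}^{\orth}}{\mathbf{L}\mathbf{x}^{\orth}}\ge\lambda_{2}(L)\snorm{\mathbf{x}^{\orth}}$ (valid because $\mathbf{x}^{\orth}\in\mathrm{ran}\,\Pi_{\orth}$), the equilibrium identity $\Pi_{\orth}\mathcal{R}^{T}\mathbf{F}(\bar{\mathbf{x}}^{\orthc})=\mathbf{0}$, the fact that $\mathbf{F}$ inherits $L_F$-Lipschitzness from $F$ block-wise, and $\norm{\mathbf{x}^{\orthc}-\bar{\mathbf{x}}^{\orthc}}\le\norm{\mathbf{z}^{\orthc}}+L_{\mathcal{J}}\norm{\mathbf{r}^{\orthc}-\bar{\mathbf{r}}^{\orthc}}$ to obtain
\begin{align*}
\dot V^{\orth}\le-\bigl(c\lambda_{2}(L)-L_F\bigr)\snorm{\mathbf{x}^{\orth}}-\beta\snorm{\mathbf{x}^{\orth}-\mathbf{r}^{\orth}}+L_F\norm{\mathbf{x}^{\orth}}\bigl(\norm{\mathbf{z}^{\orthc}}+L_{\mathcal{J}}\norm{\mathbf{r}^{\orthc}-\bar{\mathbf{r}}^{\orthc}}\bigr);
\end{align*}
the $-L_F\snorm{\mathbf{x}^{\orth}}$ term is the origin of the ``$+L_F$'' in \eqref{eqn:graphGameCondition}.

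For the consensus part, the derivative of $V^{\orthc}$ along the full dynamics \eqref{eqn:consDyn} equals its derivative along the restricted dynamics \eqref{eqn:consDynFI} plus the single term created by the argument $\mathbf{x}^{\orth}$ inside $\mathbf{F}$ in the $\mathbf{z}^{\orthc}$-equation; since $\mathbf{z}^{\orthc}=\mathbf{1}_{N}\otimes z$, that term equals $-d\inp*{z}{\mathbf{F}(\mathbf{z}^{\orthc}+\mathbf{h}(\mathbf{r}^{\orthc})+\mathbf{x}^{\orth})-\mathbf{F}(\mathbf{z}^{\orthc}+\mathbf{h}(\mathbf{r}^{\orthc}))}$ and is bounded by $\tfrac{dL_F}{\sqrt{N}}\norm{\mathbf{z}^{\orthc}}\norm{\mathbf{x}^{\orth}}$, so Lemma~\ref{lemma:consTTS} yields $\dot V^{\orthc}\le-\varpi^{T}\Phi\varpi+\tfrac{dL_F}{\sqrt{N}}\norm{\mathbf{z}^{\orthc}}\norm{\mathbf{x}^{\orth}}$ with $\varpi=(\norm{\mathbf{r}^{\orthc}-\bar{\mathbf{r}}^{\orthc}},\norm{\mathbf{z}^{\orthc}})$. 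Adding the two estimates and setting $q=(\norm{\mathbf{r}^{\orthc}-\bar{\mathbf{r}}^{\orthc}},\norm{\mathbf{z}^{\orthc}},\norm{\mathbf{x}^{\orth}})$, I get $\dot V\le-q^{T}\mathcal{M}q-\beta\snorm{\mathbf{x}^{\orth}-\mathbf{r}^{\orth}}$, where $\mathcal{M}$ has leading $2\times2$ block $\Phi$ (positive definite by Lemma~\ref{lemma:consTTS}), $(3,3)$-entry $c\lambda_{2}(L)-L_F$, and symmetric off-diagonal entries $-\tfrac12L_FL_{\mathcal{J}}$ and $-\tfrac12L_F(1+\tfrac{d}{\sqrt{N}})$. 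By the Schur complement, $\mathcal{M}\succ0$ iff $c\lambda_{2}(L)-L_F>\tfrac14L_F^{2}\,v^{T}\Phi^{-1}v$ with $v=(L_{\mathcal{J}},1+\tfrac{d}{\sqrt{N}})$; since $\Phi^{-1}=(\det\Phi)^{-1}\,\mathrm{adj}\,\Phi$ and $v^{T}(\mathrm{adj}\,\Phi)v=\eta_{1}+\eta_{2}$, this is precisely \eqref{eqn:graphGameCondition}. Hence $\dot V\le0$, and $\dot V=0$ forces $q=\mathbf{0}$ and $\mathbf{x}^{\orth}=\mathbf{r}^{\orth}$, so $(\mathbf{z}^{\orthc},\mathbf{r}^{\orthc},\mathbf{x}^{\orth},\mathbf{r}^{\orth})=(\mathbf{0},\mathbf{1}_{N}\otimes x^{*},\mathbf{0},\mathbf{0})$; this singleton is invariant, so LaSalle's invariance principle together with radial unboundedness of $V$ gives global convergence of \eqref{eqn:distProposedAlg} to $(\mathbf{1}_{N}\otimes x^{*},\mathbf{1}_{N}\otimes x^{*})$ (using $\mathbf{h}(\mathbf{1}_{N}\otimes x^{*})=\mathbf{1}_{N}\otimes x^{*}$, which follows from $F(x^{*})=0$).

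The delicate step is the middle one: tracking exactly how the off-consensus perturbation $\mathbf{x}^{\orth}$ propagates into the derivative of $V^{\orthc}$ and into the orthogonal estimate so that the combined coupling appears with precisely the coefficients of $v=(L_{\mathcal{J}},1+\tfrac{d}{\sqrt{N}})$, and then verifying the algebraic identity $v^{T}(\mathrm{adj}\,\Phi)v=\eta_{1}+\eta_{2}$, which is what makes the Schur-complement condition for $\mathcal{M}\succ0$ collapse to \eqref{eqn:graphGameCondition}. The remaining ingredients — the EIP cancellations in the filter block, the Laplacian eigenvalue bound on $\mathrm{ran}\,\Pi_{\orth}$, the reduction of the largest invariant set to the equilibrium, and radial unboundedness via Lipschitzness of $\mathbf{h}$ — are routine given Lemmas~\ref{lemma:eipSub2}, \ref{lemma:consTTS}, and \ref{lemma:resInvLip}.
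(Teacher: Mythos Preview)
Your proposal is correct and follows essentially the same route as the paper: the same composite Lyapunov function $V=V^{\orthc}+V^{\orth}$, the same splitting of $\dot V^{\orthc}$ into the restricted part handled by Lemma~\ref{lemma:consTTS} plus the single $\mathbf{x}^{\orth}$-perturbation term bounded by $\tfrac{dL_F}{\sqrt{N}}\norm{\mathbf{z}^{\orthc}}\norm{\mathbf{x}^{\orth}}$, the same $3\times3$ block matrix with $\Phi$ in the upper-left corner, and the same Schur-complement reduction to \eqref{eqn:graphGameCondition}. Your treatment is in fact slightly more explicit than the paper's in two places---you spell out that $v^{T}(\mathrm{adj}\,\Phi)v=\eta_{1}+\eta_{2}$ is the algebraic identity behind the Schur-complement condition, and you note the homeomorphism/radial-unboundedness point that justifies transporting the LaSalle conclusion back to the original coordinates.
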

\begin{proof}
	Consider the candidate Lyapunov function,
\begin{align*}
V(\mathbf{z}^{\orthc},\mathbf{r}^{\orthc},\mathbf{x}^{\orth},\mathbf{r}^{\orth}) &= \frac{1-d}{2}\snorm{\mathbf{r}^{\orthc} - \mathbf{\bar{r}}^{\orthc}} + \frac{d}{2}\snorm{\mathbf{z}^{\orthc}} \\
	&\quad + \frac{1}{2}\snorm{\mathbf{x}^{\orth}} + \frac{\beta}{2\alpha}\snorm{\mathbf{r}^{\orth}}
\end{align*}
where $\mathbf{\bar{r}}^{\orthc} = \mathbf{1}_{N}\otimes x^{*}$ and $\mathbf{F}(\mathbf{\bar{r}}^{\orthc}) =F(x^*)= 0$. 
Along \eqref{eqn:orthDyn} and \eqref{eqn:consDyn}, after re-grouping terms we can write,
\begin{align*}
	\dot{V} &= \alpha(1-d)\inp*{\mathbf{r}^{\orthc} - \mathbf{\bar{r}}^{\orthc}}{\mathbf{z}^{\orthc} + \mathbf{h}(\mathbf{r}^{\orthc}) - \mathbf{r}^{\orthc}} \\
	& - \frac{d}{N}\inp*{\mathbf{z}^{\orthc}}{\mathbf{1}_{N}\otimes\mathbf{F}(\mathbf{z}^{\orthc} + \mathbf{h}(\mathbf{r}^{\orthc}))} \\
	&- d\inp*{\mathbf{z}^{\orthc}}{\beta(\mathbf{z}^{\orthc} + \mathbf{h}(\mathbf{r}^{\orthc}) - \mathbf{r}^{\orthc}) + \frac{\partial \mathbf{h}}{\partial \mathbf{r}}\mathbf{\dot{r}^{\orthc}}} \\
	&-  \frac{d}{N}\inp*{\mathbf{z}^{\orthc}}{\mathbf{1}_{N}\otimes\mathbf{F}(\mathbf{z}^{\orthc} + \mathbf{h}(\mathbf{r}^{\orthc}) + \mathbf{x}^{\orth})} \\
	&+  \frac{d}{N}\inp*{\mathbf{z}^{\orthc}}{\mathbf{1}_{N}\otimes\mathbf{F}(\mathbf{z}^{\orthc} + \mathbf{h}(\mathbf{r}^{\orthc}))} - \beta\snorm{\mathbf{x}^{\orth}-\mathbf{r}^{\orth}} \\
	&- \inp*{\mathbf{x}^{\orth}}{\Pi_{\orth}\mathcal{R}^{T}\mathbf{F}(\mathbf{z}^{\orthc} + \mathbf{h}(\mathbf{r}^{\orthc}) + \mathbf{x}^{\orth}) + c\mathbf{L}\mathbf{x}^{\orth}}
\end{align*}
Note that the first three terms correspond to $\dot{V}^{\orthc}$ along \eqref{eqn:consDynFI} in Lemma \ref{lemma:consTTS}, and $\beta>0$. Therefore, using Lemma \ref{lemma:consTTS} yields,
\begin{align*}
	\dot{V} &\leq -\omega^{T} 
\Phi 	
\omega
 \\
 	&-  \frac{d}{N}\inp*{\mathbf{z}^{\orthc}}{\mathbf{1}_{N}\otimes\mathbf{F}(\mathbf{z}^{\orthc} + \mathbf{h}(\mathbf{r}^{\orthc}) + \mathbf{x}^{\orth})  - \mathbf{1}_{N}\otimes\mathbf{F}(\mathbf{z}^{\orthc} + \mathbf{h}(\mathbf{r}^{\orthc}))} \\
	&- \inp*{\mathbf{x}^{\orth}}{\Pi_{\orth}\mathcal{R}^{T}\mathbf{F}(\mathbf{z}^{\orthc} + \mathbf{h}(\mathbf{r}^{\orthc}) + \mathbf{x}^{\orth})} - c\lambda_{2}(L)\snorm{\mathbf{x}^{\orth}} 
\end{align*}
where $\omega  = \bracket{\norm{\mathbf{r}^{\orthc} - \mathbf{\bar{r}}^{\orthc}}, \norm{\mathbf{z}^{\orthc}}}$. Under Assumption \ref{asmp:inverse}, it follows that  $\mathbf{F}$ is also $L_F$-Lipschitz, (cf. Lemma 3, \cite{gramProx} or Lemma 1,\cite{TatarenkoShi}). Using this and Cauchy-Schwarz inequality, as well as $\Pi_{\orth}\mathcal{R}^{T}\mathbf{F}(\mathbf{h}(\mathbf{\bar{r}}^{\orthc})) = \mathbf{0}_{Nn}$  yields, 
\begin{align*}
	\dot{V} 
	&\leq -\omega^{T}
	\Phi
	\omega  + \frac{d}{N} \sqrt{N} L_F \norm{\mathbf{z}^{\orthc}} \norm{\mathbf{x}^{\orth}} - c\lambda_{2}(L)\snorm{\mathbf{x}^{\orth}} \\
	&+ \norm{\mathbf{x}^{\orth}}\norm{\Pi_{\orth}\mathcal{R}^{T}}\norm{\mathbf{F}(\mathbf{z}^{\orthc} + \mathbf{h}(\mathbf{r}^{\orthc}) + \mathbf{x}^{\orth}) -
	\mathbf{F}(\mathbf{h}(\mathbf{\bar{r}}^{\orthc}))} 
\end{align*}which, with $\norm{\Pi_{\orth}\mathcal{R}^{T}} \leq 1$ and Lemma \ref{lemma:resInvLip}(ii) for $\mathbf{h}$, leads to,
\begin{align*}
	\dot{V} 
	&\leq -\omega^{T}
	\Phi
	\omega  + \frac{d}{N} \sqrt{N} L_F \norm{\mathbf{z}^{\orthc}} \norm{\mathbf{x}^{\orth}} - c\lambda_{2}(L)\snorm{\mathbf{x}^{\orth}} \\
	&+ \norm{\mathbf{x}^{\orth}} L_F\bracket{\norm{\mathbf{z}^{\orthc}} + L_{\mathcal{J}}\norm{\mathbf{r}^{\orthc} - \mathbf{\bar{r}}^{\orthc}} + \norm{\mathbf{x}^{\orth}}}
\end{align*}

Therefore,
\begin{align*}
	&\dot{V} \leq 
	 -\hat{\omega}^{T} \begin{bmatrix}
	& \hspace{-1cm}  \Phi  
	& -\frac{L_FL_{\mathcal{J}}}{2} \\
	& & -\frac{L_F(\sqrt{N}+d)}{2\sqrt{N}} \\
	-\frac{L_FL_{\mathcal{J}}}{2} & -\frac{L_F(\sqrt{N}+d)}{2\sqrt{N}} & c\lambda_{2}(L)-L_F
	\end{bmatrix} \hat{\omega}
\end{align*}
where $\hat{\omega}:= \bracket{\omega, \norm{\mathbf{x}^{\orth}}}= \bracket{\norm{\mathbf{r}^{\orthc} - \mathbf{\bar{r}}^{\orthc}}, \norm{\mathbf{z}^{\orthc}}, \norm{\mathbf{x}^{\orth}}}$. 
The block matrix is positive definite if its Schur complement is positive definite, i.e., if 
\begin{align*}
	c\lambda_{2}(L) > \frac{\eta_{1} + \eta_{2}}{4 det(\Phi)}L_{F}^{2} + L_{F}
\end{align*}
where $\eta_{1}$, $\eta_{2}$ are as in the  statement.  Therefore, $\dot{V} \leq 0$ and $\dot{V} = 0$ only if ${\mathbf{r}^{\orthc} = \mathbf{\bar{r}}^{\orthc}}=\mathbf{1}_{N}\otimes x^*$,  $\mathbf{z}^{\orthc} =0$, $\mathbf{x}^{\orth}=0$, i.e.,  $\mathbf{x}^{\orthc} = 0+ \mathbf{h}(\mathbf{\bar{r}}^{\orthc})= \mathbf{h}(\mathbf{1}_{N}\otimes x^*)= \mathbf{1}_{N}\otimes h(x^*)=\mathbf{1}_{N}\otimes x^*$, where since $F(x^*)=0$, $x^*$ is a NE. The conclusion follows by a LaSalle argument \cite{nonlinear}. 
\end{proof}


The conditions that we obtain for Theorem \ref{thm:distMono} are conservative. In the following section we restrict our attention to an important subclass of games called quadratic games and derive tighter conditions on the parameters $\alpha, \beta$ to ensure convergence.

\section{Quadratic Hypomonotone Games} \label{sec:Quad}

In this section, we consider a quadratic game $J_{i}(x_{i},x_{-i})= \frac{1}{2} x^TQ_{i}x + l_{i}^{T}x + c_{i}$ where $Q_{i}= Q_{i}^{T}\in\reals^{n\times n}$, $l_{i}\in\reals^{n}$, and $c_{i} \in \reals$. The gradient of agents cost function with respect to their own action is, $\nabla_{x_{i}}J_{i}(x) = Q_{i}x + l_{i}$ and the pseudo-gradient is,
\begin{align}
	F(x) &= Ax + b ,\qquad
	A \defeq\begin{bmatrix}
		Q_{1} \\
		Q_{2} \\
		\vdots \\
		Q_{N}
	\end{bmatrix}, \quad
	b \defeq \begin{bmatrix}
		l_{1} \\
		l_{2} \\
		\vdots \\
		l_{N}
	\end{bmatrix} \label{eqn:quad_pseudo}
\end{align}

For the perfect information case, algorithm \eqref{eqn:proposedAlg}, after the change of coordinates, $\hat{x} = x-x^*$ and $\hat{r} = r - r^{*}$, is written as,
\begin{align} \label{eqn:DynLT}
	\dot{w} = \begin{bmatrix}
	\dot{\hat{x}} \\
	\dot{\hat{r}}
	\end{bmatrix} &= \begin{bmatrix}
		-A-\beta I & \beta I \\
		\alpha I & -\alpha I
	\end{bmatrix}\begin{bmatrix}
	\hat{x} \\
	\hat{r}
	\end{bmatrix} \defeq M w
\end{align}

The following lemma relates the eigenvalues of $A$ to the eigenvalues of the overall $M$, \eqref{eqn:DynLT}. 

\begin{lemma} \label{lemma:eigRelation}
	Let $A\in \reals^{n\times n}$ be a matrix where the $i^{th}$ eigenvalue of $A$ is denoted $\rho_{i}$. Then the eigenvalues of $M$, \eqref{eqn:DynLT}, are,
	\begin{align}
		\lambda_{i} &= \frac{-(\alpha + \beta + \rho_{i}) \pm \sqrt{(\alpha + \beta + \rho_{i})^{2} - 4\alpha \rho_{i}}}{2} \label{eqn:eigMap}
	\end{align}
for all $i \in \set{1,\dots,n}$.	
\end{lemma}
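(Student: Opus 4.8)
## Proof Proposal

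The plan is to compute the eigenvalues of $M$ by exploiting its block structure. First I would observe that $M$ is a $2\times 2$ block matrix whose four blocks, namely $-A-\beta I$, $\beta I$, $\alpha I$, and $-\alpha I$, all commute with one another, since three of them are scalar multiples of the identity and the fourth is a polynomial in $A$. For block matrices with pairwise-commuting blocks, the determinant of $M - \lambda I_{2n}$ equals the determinant of the ``formal'' $2\times 2$ determinant computed over the commutative ring generated by $A$; that is,
\begin{align*}
\det(M - \lambda I_{2n}) &= \det\Bigl( (-A - \beta I - \lambda I)(-\alpha I - \lambda I) - (\beta I)(\alpha I) \Bigr).
\end{align*}
This is a standard fact (e.g.\ Silvester's identity for block matrices with commuting blocks), and it reduces the $2n\times 2n$ eigenvalue problem to analyzing the single $n\times n$ matrix polynomial above.

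Next I would expand the matrix inside the determinant. Writing it out gives
\begin{align*}
(A + \beta I + \lambda I)(\alpha I + \lambda I) - \alpha\beta I &= \lambda^2 I + \lambda(\alpha + \beta)I + \lambda A + \beta\alpha I + \alpha A - \alpha\beta I \\
&= \lambda^2 I + \lambda(\alpha + \beta)I + \lambda A + \alpha A,
\end{align*}
so $\det(M - \lambda I_{2n}) = \det\bigl( \lambda^2 I + (\lambda(\alpha+\beta) + \lambda A + \alpha A)\bigr) = \det\bigl( (\lambda^2 + \lambda\alpha + \lambda\beta)I + (\lambda + \alpha)A \bigr)$. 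Now I would diagonalize (or Schur-triangularize) $A$: there is an invertible $P$ such that $P^{-1}AP$ is upper triangular with the eigenvalues $\rho_1,\dots,\rho_n$ on the diagonal. Since the expression is a polynomial in $A$ with scalar coefficients depending on $\lambda$, the same $P$ triangularizes it, and its determinant is the product over $i$ of $\lambda^2 + \lambda\alpha + \lambda\beta + (\lambda+\alpha)\rho_i = \lambda^2 + \lambda(\alpha + \beta + \rho_i) + \alpha\rho_i$. Hence
\begin{align*}
\det(M - \lambda I_{2n}) = \prod_{i=1}^{n}\Bigl( \lambda^2 + \lambda(\alpha + \beta + \rho_i) + \alpha\rho_i \Bigr),
\end{align*}
and the eigenvalues of $M$ are exactly the roots of each quadratic factor, which by the quadratic formula are precisely \eqref{eqn:eigMap}.

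I do not anticipate a serious obstacle here; the argument is essentially a bookkeeping computation. The one point requiring a little care is the justification that $\det(M-\lambda I_{2n})$ may be computed as the determinant of the reduced $n\times n$ expression — i.e.\ invoking the commuting-blocks determinant formula correctly (the blocks $\alpha I$ and $-\alpha I - \lambda I$ commute with $-A-\beta I - \lambda I$, which is what the formula needs). An alternative, fully elementary route that avoids citing that identity is eigenvector-based: if $A v = \rho_i v$, look for eigenvectors of $M$ of the form $(a v, b v)^T$; substituting into $M (av, bv)^T = \lambda (av, bv)^T$ yields the $2\times 2$ scalar system $\begin{bmatrix} -\rho_i - \beta & \beta \\ \alpha & -\alpha\end{bmatrix}\begin{bmatrix} a \\ b \end{bmatrix} = \lambda \begin{bmatrix} a \\ b\end{bmatrix}$, whose characteristic equation is again $\lambda^2 + \lambda(\alpha+\beta+\rho_i) + \alpha\rho_i = 0$. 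Counting multiplicities (using a Schur basis for $A$ if $A$ is not diagonalizable) then accounts for all $2n$ eigenvalues, giving \eqref{eqn:eigMap}. I would present whichever of these two is cleaner given the paper's level of detail, probably the determinant computation since it handles multiplicities automatically.
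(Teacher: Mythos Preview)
Your proposal is correct. Your primary route---computing $\det(M-\lambda I_{2n})$ via the commuting-blocks determinant formula and then Schur-triangularizing the resulting $n\times n$ polynomial in $A$---is a genuinely different approach from the paper's. The paper instead works directly with eigenvectors: it writes an eigenvector of $M$ as $(x_i,y_i)$, uses the second block row $\alpha x_i - \alpha y_i = \lambda_i y_i$ to eliminate $x_i$, substitutes into the first block row to obtain $A y_i = -\frac{\lambda_i(\alpha+\beta+\lambda_i)}{\alpha+\lambda_i}\,y_i$, recognizes $y_i$ as an eigenvector of $A$ with eigenvalue $\rho_i$, and then solves the resulting quadratic in $\lambda_i$. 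Interestingly, this is essentially the ``alternative, eigenvector-based route'' you sketch at the end of your proposal.

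What each buys: the paper's argument is shorter and requires no external identity, but as written it only shows that every eigenvalue of $M$ arises from some $\rho_i$ via the quadratic; it does not explicitly count multiplicities or handle the degenerate case $\lambda_i=-\alpha$ (where the elimination step divides by zero). Your determinant computation produces the full characteristic polynomial $\prod_i\bigl(\lambda^2+(\alpha+\beta+\rho_i)\lambda+\alpha\rho_i\bigr)$ directly, so multiplicities and all $2n$ eigenvalues are accounted for automatically, and no diagonalizability of $A$ is needed. Either approach is fine here; yours is the more complete of the two.
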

\begin{proof}
	Found in the Appendix
\end{proof}

The following Lemma gives conditions for the eigenvalues of $M$ to be in the OLHP.
\begin{lemma} \label{thm:stableA}
Let $A\in \reals^{n\times n}$ be a matrix where the $i^{th}$ eigenvalue of $A$ is denoted $\rho_{i} = r_{i} + \mathfrak{j}k_{i}$ where $r_{i}$ ($k_{i}$) is the real (imaginary) part of $\rho_{i}$ and $\mathfrak{j} = \sqrt{-1}$. 
\begin{enumerate}
	\item[(i)] If $\rho_{i} = 0$ and $\alpha, \beta > 0$, then $\lambda_{i}$ from \eqref{eqn:eigMap} are $0$ and $-(\alpha+\beta)$.
	\item[(ii)] If $\rho_{i} \neq 0$, $r_{i} \geq 0$ and $\alpha, \beta > 0$, then $\lambda_{i}$ from \eqref{eqn:eigMap} are complex conjugate with real part less than $0$.
	\item[(iii)] If $r_{i} < 0$, $\beta \in \left(-r_i,\frac{k_i^2 + r_i^2}{-r_i}\right)$ and
	\begin{align*}
	\alpha \in \left(0,-(\beta+r_{i}) + \sqrt{\frac{(\beta+r_i)k_i^2}{-r_i}}\right)
\end{align*}
then $\lambda_{i}$ from \eqref{eqn:eigMap} have real part less than $0$.
\end{enumerate}	
\end{lemma}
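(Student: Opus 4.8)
The plan is to convert the spectral statement into a root-location question for one scalar quadratic, resolve that with a Hurwitz test for complex-coefficient quadratics, and then verify the three cases by substitution. By Lemma \ref{lemma:eigRelation} the two numbers $\lambda_i$ in \eqref{eqn:eigMap} are exactly the roots of
\[
q_i(\lambda) = \lambda^2 + (\alpha+\beta+\rho_i)\lambda + \alpha\rho_i ,
\]
so the claim is: when do both roots of $q_i$ lie in the open left half-plane (OLHP)? (In case (i) one root will sit on the imaginary axis -- the consensus/NE direction, where $M$ is necessarily singular.)

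First I would record a Hurwitz criterion for a monic quadratic $\lambda^2+\mu_1\lambda+\mu_0$ with complex coefficients. Completing the square, $\lambda = -\tfrac12\mu_1 + w$ with $w^2 = \tfrac14\mu_1^2 - \mu_0$, so both roots are in the OLHP iff $\lvert\mathrm{Re}\,w\rvert < \tfrac12\,\mathrm{Re}\,\mu_1$. This forces $\mathrm{Re}\,\mu_1>0$; and since $(\mathrm{Re}\,w)^2$ is the unique nonnegative root of $P^2 - \mathrm{Re}(w^2)\,P - \tfrac14\bigl(\mathrm{Im}(w^2)\bigr)^2 = 0$, substituting $w^2=\tfrac14\mu_1^2-\mu_0$ and clearing the radical reduces $(\mathrm{Re}\,w)^2<\tfrac14(\mathrm{Re}\,\mu_1)^2$ to the single inequality
\[
(\mathrm{Re}\,\mu_1)^2\,\mathrm{Re}\,\mu_0 + (\mathrm{Re}\,\mu_1)(\mathrm{Im}\,\mu_1)(\mathrm{Im}\,\mu_0) > (\mathrm{Im}\,\mu_0)^2 ,
\]
the radical being harmless because this inequality itself implies the positivity $(\mathrm{Re}\,\mu_1)^2+(\mathrm{Im}\,\mu_1)^2+4\,\mathrm{Re}\,\mu_0>0$ used in the step (alternatively one may quote the complex Routh--Hurwitz/Bilharz criterion). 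Taking $\mu_1=\alpha+\beta+\rho_i$, $\mu_0=\alpha\rho_i$ and dividing the second inequality by $\alpha>0$, both roots of $q_i$ are in the OLHP iff
\[
\alpha+\beta+r_i>0 \quad\text{and}\quad (\alpha+\beta+r_i)^2 r_i + (\alpha+\beta+r_i)k_i^2 > \alpha k_i^2 .
\]

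It remains to check these in the three cases. In (i), $\rho_i=0$ gives $q_i(\lambda)=\lambda\bracket{\lambda+\alpha+\beta}$ directly. In (ii), $\alpha+\beta+r_i>0$ is immediate, and since $r_i\ge0$ the left-hand side of the second inequality is at least $(\alpha+\beta+r_i)k_i^2\ge(\alpha+\beta)k_i^2=\alpha k_i^2+\beta k_i^2>\alpha k_i^2$ when $k_i\ne0$, while for $k_i=0$ (hence $r_i>0$, as $\rho_i\ne0$) it reads $(\alpha+\beta+r_i)^2 r_i>0$; the ``complex conjugate'' description is then read off the discriminant $D=(\alpha+\beta-\rho_i)^2+4\beta\rho_i$, which is negative (pure-imaginary $\rho_i$: conjugate pair) or positive (real $\rho_i>0$: two distinct negative reals), the real parts being negative in every sub-case. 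In (iii), $r_i<0$, and once $\beta>-r_i$, $\alpha>0$ the first inequality is automatic; writing $\gamma:=\alpha+\beta+r_i>0$, the second becomes $\gamma^2 r_i > -(\beta+r_i)k_i^2$, i.e. $\gamma < \sqrt{(\beta+r_i)k_i^2/(-r_i)}$, i.e. exactly $\alpha < -(\beta+r_i)+\sqrt{(\beta+r_i)k_i^2/(-r_i)}$; the stated window $\beta\in\bracket{-r_i,\,(k_i^2+r_i^2)/(-r_i)}$ is precisely the set of $\beta$ for which the radical is real and this bound on $\alpha$ is positive, so the $\alpha$-interval is nonempty (and is empty unless $k_i\ne0$, consistent with the fact that a real negative eigenvalue of $A$ cannot be stabilized).

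The \textbf{main obstacle} is the first step -- establishing and correctly applying the complex-coefficient Hurwitz criterion, where the two branches of the complex square root and the squaring must be handled carefully; after that the case analysis is elementary. As an alternative for case (iii) that sidesteps the general criterion, one can argue by root locus: at $\alpha=0$ the roots of $q_i$ are $0$ and $-(\beta+\rho_i)$, the former moving with $\mathrm{d}\lambda/\mathrm{d}\alpha=-\rho_i/(\beta+\rho_i)$, which has negative real part exactly when $\beta<(k_i^2+r_i^2)/(-r_i)$; a root can leave the OLHP only by crossing the imaginary axis at $\lambda=\mathfrak{j}\omega$ with $\omega=-\alpha k_i/(\alpha+\beta+r_i)$, and imposing $\mathrm{Re}\,q_i(\mathfrak{j}\omega)=0$ reproduces the same upper bound on $\alpha$.
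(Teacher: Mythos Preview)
Your proof is correct and reaches the same conclusions, but the route differs from the paper's. The paper attacks the complex-coefficient quadratic $q_i$ by forming the real degree-four polynomial $\mathcal{C}_i^*\mathcal{C}_i$ (whose roots are those of $q_i$ together with their conjugates) and then running the classical real Routh--Hurwitz test on it; in case~(iii) this produces two nontrivial sign conditions $T_1>0$ and $T_2>0$, and the paper checks both separately under the stated bounds on $\alpha,\beta$. You instead derive (essentially the Bilharz criterion) a closed-form Hurwitz test directly for the complex quadratic, which collapses everything to the single inequality $(\alpha+\beta+r_i)^2 r_i+(\alpha+\beta+r_i)k_i^2>\alpha k_i^2$. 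This is cleaner: after the substitution $\gamma=\alpha+\beta+r_i$, $\epsilon=\beta+r_i$ your condition is $r_i\gamma^2+\epsilon k_i^2>0$, and one can check that the paper's $T_2$ condition factors as $(r_i\gamma^2+\epsilon k_i^2)(\gamma^2+k_i^2)>0$, so the two criteria coincide and the paper's $T_1$ check is in fact redundant. The root-locus alternative you sketch for case~(iii) is also a nice independent confirmation. One minor remark: your parenthetical about the discriminant $D$ being ``negative'' for pure-imaginary $\rho_i$ is not literally correct (for $\rho_i=\mathfrak{j}k$ the discriminant is complex unless $\alpha=\beta$, so the two roots are not conjugates of \emph{each other}); but the paper's own ``complex conjugate'' phrasing in~(ii) is already loose in the same way, and neither proof actually relies on it---what matters, and what both arguments establish, is that the real parts are negative.
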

\begin{proof}
	Found in the Appendix
\end{proof}

\begin{remark}
	Note that if the eigenvalues of $A$ fall only in case (i) and (ii) then $F$ is monotone. Additionally, the conditions $\alpha, \beta \geq 0$ are the same conditions as for the nonlinear case, Theorem \ref{thm:EIPconvergence}. If $A$ has eigenvalues in case (iii) then $F$ is hypomonotone. If the eigenvalues of $A$ are $-r\pm \mathfrak{j}k$, then $F$ is $r$-hypomonotone and $R=\frac{1}{\sqrt{r^{2}+k^{2}}}$-inverse Lipschitz. From Lemma \ref{thm:stableA}, $\beta \in (\mu, \frac{1}{\mu R^{2}})$ is the same condition on $\beta$ as in Lemma \ref{lemma:full_hypo} for the nonlinear case.
\end{remark}

\begin{thm} \label{thm:fullQuad}
	Consider a quadratic game $\mathcal{G}(\mathcal{N},J_i,\Omega_i)$ under Assumption \ref{asmp:Jsmooth}. Let the overall dynamics of the agents  be given by \eqref{eqn:proposedAlg}. For the matrix $A$ given in \eqref{eqn:quad_pseudo} with eigenvalues $\rho_{i} = r_{i} + \mathfrak{j}k_{i}$, let $\mathcal{I}=\setc{i\in\set{1,\dots,n}}{r_{i}<0}$. If $\mathcal{I}=\emptyset$ then set $\alpha, \beta > 0$ else,
\begin{align} \label{eqn:alpha_beta_conditions}
\begin{split}
	\beta &\in \bigcap_{i\in \mathcal{I}}\left(-r_i,\frac{k_i^2 + r_i^2}{-r_i}\right) \\
	\alpha &\in \bigcap_{i\in \mathcal{I}}\left(0,-\bracket{\beta+r_i} + \sqrt{\frac{(\beta+r_i)k_i^2}{-r_i}}\right)
\end{split}
\end{align}
	Then, the set $\setc{(x^*,x^{*})}{F(x^*) = 0}$  is globally asymptotically stable.
\end{thm}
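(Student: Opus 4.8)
The plan is to reduce the claim to a linear stability analysis and then feed in the two eigenvalue lemmas. Fix one NE $x^*$ (it exists under Assumption~\ref{asmp:Jsmooth}). Because $F$ is affine, $F(x)=Ax+b$, the full equilibrium set $\setc{(x^*,x^*)}{F(x^*)=0}$ is the translate by $(x^*,x^*)$ of $\setc{(v,v)}{v\in\ker A}$, and in the coordinates $w=(\hat x,\hat r)=(x-x^*,r-x^*)$ the dynamics \eqref{eqn:proposedAlg} is exactly the linear system $\dot w=Mw$ recorded in \eqref{eqn:DynLT}. Solving $Mw=0$ gives $\hat x=\hat r$ and then $A\hat x=0$, so $\ker M=\setc{(v,v)}{v\in\ker A}$ is precisely the shifted equilibrium set. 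Thus it suffices to show that for $\dot w=Mw$ every trajectory converges to $\ker M$ and that $\ker M$ is Lyapunov stable; equivalently, that $M$ has no eigenvalue in the open right half plane, that $0$ is its only imaginary-axis eigenvalue, and that $0$ is a semisimple eigenvalue of $M$.

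For the spectral part I would invoke Lemma~\ref{lemma:eigRelation}: the characteristic polynomial of $M$ factors over the eigenvalues $\rho_i=r_i+\mathfrak{j}k_i$ of $A$ as $\prod_i\bracket{\lambda^2+(\alpha+\beta+\rho_i)\lambda+\alpha\rho_i}$, so the $2n$ eigenvalues of $M$ are exactly the quantities in \eqref{eqn:eigMap}, coming in a pair for each $\rho_i$. Now split factor by factor using Lemma~\ref{thm:stableA}. If $\mathcal{I}=\emptyset$, every $\rho_i$ has $r_i\ge 0$: if $\rho_i=0$, case (i) gives the pair $\set{0,-(\alpha+\beta)}$, contributing exactly one zero root; if $\rho_i\ne 0$, case (ii) gives a strictly stable pair. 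If $\mathcal{I}\ne\emptyset$, the factors with $i\notin\mathcal{I}$ are handled the same way, and for each $i\in\mathcal{I}$ the intersections in \eqref{eqn:alpha_beta_conditions} were set up precisely so that the chosen $\alpha,\beta$ satisfy the hypothesis of Lemma~\ref{thm:stableA}(iii) for that $\rho_i$, whence its pair has strictly negative real part. In all cases every eigenvalue of $M$ is $0$ or has negative real part, and since each zero eigenvalue of $A$ contributes the factor $\lambda(\lambda+\alpha+\beta)$, the algebraic multiplicity of $0$ for $M$ equals that of $0$ for $A$.

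It remains to turn this spectral picture into global asymptotic stability of the set, and this is the delicate step. I would argue that $0$ is a semisimple eigenvalue of $M$: its geometric multiplicity is $\dim\ker M=\dim\ker A$, and one needs this to match its algebraic multiplicity, i.e.\ $\ker M^2=\ker M$. Granting semisimplicity, $\reals^{2n}=\ker M\oplus V_s$ with both summands $M$-invariant and $M|_{V_s}$ Hurwitz, so $e^{Mt}w_0=\Pi_{\ker M}w_0+e^{Mt}\Pi_{V_s}w_0\to\Pi_{\ker M}w_0\in\ker M$ with $\mathrm{dist}(w(t),\ker M)=\norm{e^{Mt}\Pi_{V_s}w_0}\le C\,\mathrm{dist}(w_0,\ker M)$; since every point of $\ker M$ is fixed by the flow, this is precisely global asymptotic stability of $\ker M=\setc{(x^*,x^*)}{F(x^*)=0}$. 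The main obstacle is this semisimplicity claim: Lemmas~\ref{lemma:eigRelation} and \ref{thm:stableA} only locate the roots, and (by the kernel computation above) $M$ has a Jordan chain at $0$ exactly when $A$ does, so to exclude it one must use more about $A$ than these lemmas provide — e.g.\ the positive definiteness of its diagonal blocks $\nabla^2_{x_i}J_i$ under Assumption~\ref{asmp:Jsmooth}, or simply the hypothesis that $0$ is a semisimple eigenvalue of $A$ (automatic when $A$ is diagonalizable). When $F$ is in addition monotone the whole difficulty can be sidestepped by invoking the LaSalle argument of Theorem~\ref{thm:EIPconvergence}.
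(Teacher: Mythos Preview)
Your approach is exactly the one the paper intends: the paper does not write out a proof of Theorem~\ref{thm:fullQuad} at all, but places it immediately after Lemmas~\ref{lemma:eigRelation} and~\ref{thm:stableA} as their direct consequence. Your reduction via the shift $w=(x-x^*,r-x^*)$ to the linear system $\dot w = Mw$ of \eqref{eqn:DynLT}, followed by a factor-by-factor application of Lemma~\ref{thm:stableA} to the quadratic factors coming from Lemma~\ref{lemma:eigRelation}, is precisely the argument the paper has set up.

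You are in fact more careful than the paper. The paper's lemmas only locate the eigenvalues of $M$ in $\{0\}\cup\{\mathrm{Re}\,\lambda<0\}$; they do not address the Jordan structure at $0$, and the paper never comments on it. Your observation that $\ker M=\setc{(v,v)}{v\in\ker A}$ and that the algebraic multiplicity of $0$ for $M$ equals that for $A$ is correct, and so is your conclusion that $M$ is semisimple at $0$ if and only if $A$ is. When $A$ is nonsingular (unique NE) the issue vanishes and $M$ is Hurwitz, which is the typical case in the examples the paper considers; when $\ker A\neq\{0\}$ one does need an extra hypothesis such as diagonalizability of $A$ or semisimplicity of its zero eigenvalue, and you are right that neither Assumption~\ref{asmp:Jsmooth} nor the lemmas supply this. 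Your fallback to Theorem~\ref{thm:EIPconvergence} in the monotone case is also apt, though note that $\mathcal{I}=\emptyset$ (all eigenvalues of $A$ in the closed right half plane) is weaker than monotonicity of $F$ (i.e.\ $A+A^T\succeq 0$), so that route only covers a subcase.
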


\begin{conj}
	For the class of quadratic games where $F$ is $R$-inverse Lipschitz (for the perfect information setting) the optimal convergence rate is $exp(\frac{-1}{3R}t)$ when $\alpha = \frac{5}{9R}$ and $\beta = \frac{4}{9R}$.
\end{conj}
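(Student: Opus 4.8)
The plan is to reduce the convergence-rate question to a scalar robust eigenvalue-placement minimax and solve it. By \eqref{eqn:DynLT} the perfect-information dynamics is the linear system $\dot w = Mw$, so the exponential rate is $e^{at}$ with $a$ the spectral abscissa of $M$; by Lemma \ref{lemma:eigRelation} and \eqref{eqn:eigMap} each eigenvalue $\lambda$ of $M$ is a root of $\lambda^2 + (\alpha+\beta+\rho)\lambda + \alpha\rho = 0$ for some eigenvalue $\rho$ of $A$, whence $a = \max_{\rho}\max_{\pm}\text{Re}\,\lambda(\rho)$. For a quadratic game with $F(x)=Ax+b$, the $R$-inverse Lipschitz part of Assumption \ref{asmp:inverse} is equivalent to $\sigma_{\min}(A)\ge 1/R$, forcing $|\rho|\ge 1/R$ for every eigenvalue, while monotonicity forces $\text{Re}\,\rho\ge 0$; conversely every such spectrum is realized by an admissible (e.g. normal) $A$. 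The optimal guaranteed rate is therefore $\min_{\alpha,\beta>0}\max_{\rho\in\mathcal A} a(\rho;\alpha,\beta)$ over $\mathcal A=\{\rho:\text{Re}\,\rho\ge 0,\ |\rho|\ge 1/R\}$, and the conjecture claims this value is $-1/(3R)$, attained at $(\alpha,\beta)=(5/(9R),4/(9R))$.

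First I would use scale invariance: jointly rescaling $(\lambda,\alpha,\beta,\rho)$ by a common factor leaves the root configuration of the characteristic polynomial unchanged, so it suffices to prove the normalized statement $R=1$ (optimal rate $-\tfrac13$ at $\alpha=\tfrac59,\beta=\tfrac49$) and then divide all of $\alpha,\beta,a$ by $R$. For the feasibility (upper) bound at $(\alpha,\beta)=(\tfrac59,\tfrac49)$ I would shift $\lambda=\mu-\tfrac13$ and apply the complex-coefficient Routh--Hurwitz test to $q(\mu)=\mu^2+(\rho+\tfrac13)\mu+\tfrac29(\rho-1)$: writing $\rho=r+\mathfrak j k$, the requirement $\text{Re}\,\mu\le 0$ (equivalently $\text{Re}\,\lambda\le-\tfrac13$) reduces to $r+\tfrac13\ge0$ together with the Hurwitz determinant $D(\rho)=(r+\tfrac13)^2\tfrac29(r-1)+(r+\tfrac13)k\cdot\tfrac29 k-\tfrac{4}{81}k^2\ge0$. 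A direct computation shows that on the inner circle $|\rho|=1$ this factors as $D=\tfrac{8}{81}\,r(1-r)$, nonnegative for $r\in[0,1]$ and vanishing exactly at $\rho=1$ and $\rho=\pm\mathfrak j$; completing this with a boundary/monotonicity check for $|\rho|\ge1$ gives $a\le-\tfrac13$ on all of $\mathcal A$, attained at the two ``corner'' directions $\rho=1$ and $\rho=\pm\mathfrak j$.

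The optimality (lower) bound is the crux, and it hinges on the worst case being attained simultaneously at the real inverse-Lipschitz direction $\rho=1$ and the harmonic direction $\rho=\mathfrak j$ — this is precisely why $\alpha\neq\beta$. At $\rho=1$ the dominant root is $\gamma_1(\alpha,\beta)=\tfrac12[-(\alpha+\beta+1)+\sqrt{(\alpha+\beta+1)^2-4\alpha}]$, while at $\rho=\mathfrak j$ the dominant real part $\gamma_{\mathfrak j}(\alpha,\beta)$ is defined implicitly by $D=0$. I would verify that these two functions have anti-parallel gradients at $(\tfrac59,\tfrac49)$ — namely $\nabla\gamma_1=(-\tfrac12,\tfrac14)$ and $\nabla\gamma_{\mathfrak j}=(1,-\tfrac12)=-2\,\nabla\gamma_1$ — so that $0$ lies in the convex hull of the active-constraint gradients (with weights $\tfrac23,\tfrac13$) and the minimax first-order stationarity condition holds. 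To upgrade this critical point to a global optimum I would argue directly that the pair of scalar inequalities $\gamma_1(\alpha,\beta)\le\gamma$ and $\gamma_{\mathfrak j}(\alpha,\beta)\le\gamma$ is jointly infeasible in $(\alpha,\beta)>0$ for every $\gamma<-\tfrac13$, by eliminating $(\alpha,\beta)$ between the two boundary equations (a resultant computation) and showing the least common $\gamma$ equals $-\tfrac13$. Rescaling by $R$ then yields the rate $e^{-t/(3R)}$ at $\alpha=5/(9R),\ \beta=4/(9R)$.

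The main obstacle is precisely this global-optimality step: the spectral abscissa is nonsmooth and nonconvex in $(\alpha,\beta)$, so the gradient/convex-hull argument only certifies a stationary point, and promoting it to a global minimum requires the explicit resultant/infeasibility analysis of the two coupled boundary conditions together with a check that no third direction of $\mathcal A$ becomes binding en route. A secondary point to pin down is the admissible class itself: a finite optimal rate relies on $\text{Re}\,\rho\ge0$, since purely hypomonotone real directions $\rho=-|\rho|$ are not stabilizable by \eqref{eqn:proposedAlg}; I would therefore state the conjecture for monotone, $R$-inverse-Lipschitz quadratic games and handle non-normal or defective $A$ by noting that the spectral abscissa still governs the exponential rate up to polynomial factors.
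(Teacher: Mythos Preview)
The paper does not prove this statement: it is explicitly labeled a \emph{Conjecture} and no argument is given for it in the text or appendix. There is therefore nothing in the paper to compare your proposal against.

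That said, your proposal is a coherent and essentially correct program for attacking the conjecture. The reduction via Lemma~\ref{lemma:eigRelation} to the scalar spectral-abscissa minimax over $\mathcal{A}=\{\rho:\operatorname{Re}\rho\ge 0,\ |\rho|\ge 1/R\}$ is right (your observation that $\|Av\|\ge\sigma_{\min}(A)\|v\|$ yields $|\rho|\ge 1/R$, and that normal $A$ realizes every such $\rho$, is exactly what is needed). Your scale reduction to $R=1$ is valid. The feasibility computation is correct: at $(\alpha,\beta)=(5/9,4/9)$ the shifted polynomial $q(\mu)=\mu^2+(\rho+\tfrac13)\mu+\tfrac29(\rho-1)$ has the Hurwitz quantity $D=\tfrac{8}{81}\,r(1-r)$ on $|\rho|=1$, vanishing precisely at $\rho=1$ and $\rho=\pm\mathfrak j$, and the dominant real parts there are indeed $-1/3$. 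Your gradient computations are also correct ($\nabla\gamma_1=(-\tfrac12,\tfrac14)$ and $\nabla\gamma_{\mathfrak j}=(1,-\tfrac12)$), so the first-order minimax stationarity holds with weights $(\tfrac23,\tfrac13)$.

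You have correctly identified the two points that would need to be completed to turn this into a proof: (i) upgrading the stationary point to a \emph{global} minimum of the nonsmooth, nonconvex spectral abscissa over $(\alpha,\beta)$---your resultant/infeasibility sketch is the natural route but is not yet a proof; and (ii) pinning down the admissible class. On (ii), the conjecture as stated is vacuous without monotonicity: if one allows hypomonotone $F$ with a real negative eigenvalue $\rho<0$, Lemma~\ref{thm:stableA}(iii) gives an empty $\beta$-interval (since $k_i=0$), so \eqref{eqn:proposedAlg} cannot stabilize the game and no uniform rate exists. Your restriction to monotone, $R$-inverse-Lipschitz quadratic games is the right reading. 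For non-diagonalizable $A$ your remark that polynomial prefactors do not affect the exponential rate suffices for the asymptotic statement.
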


\subsection{Partial Information}

In the partial information case the dynamics \eqref{eqn:distProposedAlg} are,
\begin{align} \label{eqn:distDynamics}
\begin{split}
	\mathbf{\dot{x}} &= -\mathcal{R}^{T}(\mathbf{A}\mathbf{x}+b) - \beta(\mathbf{x}-\mathbf{r}) - c\mathbf{L}\mathbf{x} \\
	\mathbf{\dot{r}} &= \alpha(\mathbf{x}-\mathbf{r})
\end{split}
\end{align}

Similar to the complete information case, after doing a change of coordinates, we can prove convergence of \eqref{eqn:distDynamics}.

\begin{thm} \label{thm:distQuad}
	Consider a game $\mathcal{G}(\mathcal{N},J_i,\Omega_i)$ under Assumption \ref{asmp:Jsmooth}, \ref{asmp:graph}, and \ref{asmp:inverse}. Let the overall dynamics of the agents be given by \eqref{eqn:distProposedAlg}. Let $\alpha, \beta$ be selected as in \eqref{eqn:alpha_beta_conditions} and scaled by $\frac{1}{N}$, and $c$ such that,
\begin{align} \label{eqn:GraphCondition}
		c\lambda_{2}(L) \geq L_{\mathbf{A}} + \bracket{L_{\mathbf{A}}\bracket{\frac{p}{\sqrt{N}} + \frac{1}{2}}}^{2}
	\end{align}
	where $L_{\mathbf{A}} = \norm{\mathbf{A}}$, $p = \norm{P}$ where $P \succ \mathbf{0}$ satisfies the Lyapunov equation $P\tilde{M} + \tilde{M}^{T}P = -I$ and
	\begin{align*}
		\tilde{M} &= \begin{bmatrix}
			-\frac{1}{N}A - \beta I & \beta I \\
			\alpha I & -\alpha I
		\end{bmatrix}
	\end{align*}
Then, the set $\setc{(\mathbf{1}\otimes x^*, \mathbf{1}\otimes x^{*})}{F(x^*) = 0}$  is globally asymptotically stable.
\end{thm}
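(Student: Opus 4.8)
The plan is to reuse the consensus/orthogonal decomposition of Theorem~\ref{thm:distMono}, but to certify the consensus--subspace dynamics by the Lyapunov equation $P\tilde M+\tilde M^{T}P=-I$ rather than by the resolvent change of coordinates, which is possible here because the game map is affine. First I would record that, by Lemma~\ref{lemma:distEQ}, the only equilibrium of \eqref{eqn:distDynamics} is $(\mathbf 1_{N}\otimes x^{*},\mathbf 1_{N}\otimes x^{*})$ with $F(x^{*})=0$, and that the $R$-inverse Lipschitz part of Assumption~\ref{asmp:inverse} (i.e.\ $\norm{x-y}\le R\norm{Ax-Ay}$) forces $A$ injective, hence invertible, so $x^{*}$ is the unique NE. I would then pass to the error variables $\hat{\mathbf x}=\mathbf x-\mathbf 1_{N}\otimes x^{*}$, $\hat{\mathbf r}=\mathbf r-\mathbf 1_{N}\otimes x^{*}$ and split them with $\Pi_{\orthc},\Pi_{\orth}$ as in \eqref{eqn:consInter}--\eqref{eqn:orthDyn}. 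On the consensus subspace, writing the consensus error through the variable $w$, the averaging $\Pi_{\orthc}\mathcal R^{T}$ produces exactly the factor $\tfrac1N$ in front of $A$, so the consensus dynamics reads $\dot w=\tilde M w+B\mathbf x^{\orth}$ with $\tilde M$ the matrix in the statement and $B$ the linear coupling through $\mathbf A\mathbf x^{\orth}$ (there is no coupling through $\mathbf r^{\orth}$).

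Second, I would verify $\tilde M$ is Hurwitz. Since $\alpha,\beta$ are the values \eqref{eqn:alpha_beta_conditions} divided by $N$, setting $\alpha_{0}=N\alpha$, $\beta_{0}=N\beta$ and applying Lemma~\ref{lemma:eigRelation} to $\tilde M$ (which is \eqref{eqn:DynLT} with $A$ replaced by $\tfrac1N A$, eigenvalues $\rho_{i}/N$) gives $\lambda_{i}(\tilde M)=\tfrac1N\lambda_{i}(M_{0})$, where $M_{0}$ is the full-information matrix \eqref{eqn:DynLT} with data $(A,\alpha_{0},\beta_{0})$. Because $A$ is invertible, no $\rho_{i}$ is zero, so case~(i) of Lemma~\ref{thm:stableA} never occurs, and \eqref{eqn:alpha_beta_conditions} places every $\lambda_{i}(M_{0})$ in the open left half plane by cases (ii)--(iii) (cf.\ Theorem~\ref{thm:fullQuad}); hence so is every $\lambda_{i}(\tilde M)$. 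Therefore $P\tilde M+\tilde M^{T}P=-I$ has a unique solution $P\succ 0$, with finite $p=\norm{P}$.

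Third, I would take the candidate
\[
V=w^{T}Pw+\tfrac12\snorm{\mathbf x^{\orth}}+\tfrac{\beta}{2\alpha}\snorm{\mathbf r^{\orth}}
\]
and differentiate along \eqref{eqn:distDynamics}. The consensus block contributes $w^{T}(P\tilde M+\tilde M^{T}P)w+2w^{T}PB\mathbf x^{\orth}=-\snorm{w}+2w^{T}PB\mathbf x^{\orth}$, whose cross term is $\le 2p\norm{B}\,\norm{w}\,\norm{\mathbf x^{\orth}}$, with $\norm{B}$ carrying a $1/\sqrt N$ coming from the interplay of $\Pi_{\orthc}$ and the embedding $\mathbf 1_{N}\otimes(\cdot)$. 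The orthogonal block contributes $-\beta\snorm{\mathbf x^{\orth}-\mathbf r^{\orth}}\le 0$ from the $\beta$-coupling; $-c\inp*{\mathbf x^{\orth}}{\mathbf L\mathbf x^{\orth}}\le -c\lambda_{2}(L)\snorm{\mathbf x^{\orth}}$ since $\mathbf x^{\orth}\perp\ker\mathbf L$; and, after subtracting $\Pi_{\orth}\mathcal R^{T}\mathbf F(\mathbf 1_{N}\otimes x^{*})=0$ and using $\norm{\Pi_{\orth}\mathcal R^{T}}\le 1$ together with $\norm{\mathbf A}=L_{\mathbf A}$, a term bounded by $L_{\mathbf A}\norm{\mathbf x^{\orth}}\bracket{\norm{w}+\norm{\mathbf x^{\orth}}}$. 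Collecting everything and symmetrizing, with $\hat\omega=\bracket{\norm{w},\norm{\mathbf x^{\orth}}}$ and $q:=L_{\mathbf A}\bracket{\tfrac{p}{\sqrt N}+\tfrac12}$ absorbing the two cross-couplings,
\[
\dot V\le -\hat\omega^{T}\begin{bmatrix} 1 & -q\\ -q & c\lambda_{2}(L)-L_{\mathbf A}\end{bmatrix}\hat\omega ,
\]
and since the $(1,1)$ entry is $1$, the $2\times2$ matrix is positive semidefinite iff its Schur complement $c\lambda_{2}(L)-L_{\mathbf A}-q^{2}$ is nonnegative, i.e.\ precisely under \eqref{eqn:GraphCondition}. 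Hence $\dot V\le 0$; on $\{\dot V=0\}$ one gets $w=0$, $\mathbf x^{\orth}=0$ and $\mathbf x^{\orth}=\mathbf r^{\orth}$, so $\mathbf r^{\orth}=0$ and $\hat{\mathbf x}^{\orthc}=\hat{\mathbf r}^{\orthc}=0$; since $P\succ0$ makes $V$ radially unbounded, LaSalle's invariance principle yields global asymptotic stability of $(\mathbf 1\otimes x^{*},\mathbf 1\otimes x^{*})$.

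I expect the main obstacle to be the normalization bookkeeping in the third step: tracking the $\tfrac1N$, $\sqrt N$, $\Pi_{\orth}$, $\mathcal R^{T}$ and $\norm{P}$ factors so that, after splitting the cross terms (part into the $\snorm{w}$ block, part into the off-diagonal), the off-diagonal entry comes out exactly as $L_{\mathbf A}\bracket{\tfrac{p}{\sqrt N}+\tfrac12}$ and the semidefiniteness threshold coincides with \eqref{eqn:GraphCondition}. A minor but essential secondary point is confirming that case~(i) of Lemma~\ref{thm:stableA} is genuinely excluded — this is exactly what the $R$-inverse Lipschitz hypothesis buys (invertibility of $A$, hence a single NE and a Hurwitz $\tilde M$ on the consensus subspace).
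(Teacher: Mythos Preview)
Your proposal is correct and follows essentially the same route as the paper: consensus/orthogonal decomposition, a quadratic Lyapunov term $w^{T}Pw$ for the consensus error certified by $P\tilde M+\tilde M^{T}P=-I$, the EIP-style storage $\tfrac12\snorm{\mathbf x^{\orth}}+\tfrac{\beta}{2\alpha}\snorm{\mathbf r^{\orth}}$ for the orthogonal part, and a $2\times2$ Schur-complement test yielding \eqref{eqn:GraphCondition}. The only cosmetic difference is that the paper writes the consensus Lyapunov term as $\sum_{i\in\mathcal N}\snorm{(\mathbf w^{i})^{\orthc}-w^{*}}_{P}$ (all summands equal, so this is $N$ times your $w^{T}Pw$) and keeps the sharper factor $\sqrt{(N-1)/N}$ in the intermediate bounds before relaxing to the cleaner constants in \eqref{eqn:GraphCondition}; your explicit use of the $R$-inverse Lipschitz hypothesis to exclude case~(i) of Lemma~\ref{thm:stableA} (via invertibility of $A$) is a point the paper leaves implicit.
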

\begin{proof}
	Found in the Appendix.
\end{proof}

\begin{remark}
Note that Theorem \ref{thm:distQuad} requires $\norm{P}= p$, if we restrict $\beta = \alpha$, and use Corollary 1 \cite{lyapBound} and Corollary 2.10 \cite{blockNorm}, we can obtain the simpler bound
\begin{align} \label{eqn:lyapBound}
	p \leq \frac{N}{2L_{A} + 4\alpha N}
\end{align}
\end{remark}

\subsection{Comparing Results For Quadratic vs General Games}

For perfect information quadratic games with monotone pseudo-gradient, notice that Theorem \ref{thm:fullQuad} requires that $\alpha, \beta > 0$ and the rate of convergence can be determined by Lemma \ref{lemma:eigRelation}. For perfect information general games with monotone pseudo-gradient, Theorem \ref{thm:EIPconvergence} also requires $\alpha, \beta > 0$ but with no rate of convergence.

For the partial information quadratic games with monotone pseudo-gradient, Theorem \ref{thm:distQuad} again requires that $\alpha, \beta > 0$. Additionally, the theorem requires that $c$ is larger than a function of the Lipschitz constant of the pseudo-gradient. For partial information general games with monotone pseudo-gradient, Theorem \ref{thm:distMono} allows $\beta > 0$ but $\alpha$ is now restricted by a function of $\beta$. Additionally, the $c$ term is larger than the one obtained for quadratic games.

For perfect information quadratic games with hypomonotone pseudo-gradient, Lemma \ref{thm:stableA} and Theorem \ref{thm:distQuad} provides tight conditions on the range of values of $\alpha$ and $\beta$ for convergence to a NE. Note that for quadratic games, we are able to use the same method of analyzing the eigenvalues for both monotone and hypomonotone games. On the other hand for general games, the EIP analysis cannot be extended to the hypomonotone case and a different method is used to prove convergence. The analysis ends up having restrictions on $\alpha$ that don't appear for the quadratic case. The quadratic game case suggests that there might be a better Lyapunov function that could remove or relax the condition on $\alpha$ for general games.

\section{Simulations} \label{sec:sim}
In this section we first consider three hypomonotone quadratic games between $N=10$ agents communicating over a ring $G_c$ graph. We index each game by $ \mathcal{G}_{1}, \mathcal{G}_{2}, \mathcal{G}_{3}$. In game $\mathcal{G}_{j}$, the cost function for agent $i$ is $J_{i}(x) = w_{i}^{\mathcal{G}_{j}} x_{i}^{T}\begin{bmatrix} 5 & 1 \\ -1 & 5 \end{bmatrix} x_{N+1-i}$ where $w^{\mathcal{G}_{j}} = [w_{1}^{\mathcal{G}_{j}},\dots, w_{N}^{\mathcal{G}_{j}}]$ is equal to
\begin{align*}
	w^{\mathcal{G}_{1}} &= \begin{bmatrix}
	1 & 1 & 1 & 1 & 1 & -1 & -1 & -1 & -1 & -1
	\end{bmatrix} \\
	w^{\mathcal{G}_{2}} &= \begin{bmatrix}
	\frac{-9}{9} & \frac{-7}{9} & \frac{-5}{9} & \frac{-3}{9} & \frac{-1}{9} &\frac{1}{9} & \frac{3}{9} & \frac{5}{9} & \frac{7}{9} & \frac{9}{9}
	\end{bmatrix} \\
	w^{\mathcal{G}_{3}} &= \begin{bmatrix}
	-2 & -1 & -1 & -1 & -1 & 1 & 1 & 1 & 1 & 2
	\end{bmatrix}
\end{align*}
For game $\mathcal{G}_{1}$ the eigenvalues of $A$ from \eqref{eqn:quad_pseudo} are $1\pm \mathfrak{j}5$; $\mathcal{G}_{2}$ the eigenvalues are $\pm 1\pm \mathfrak{j}5$, $\pm \frac{7}{9}\pm \mathfrak{j}\frac{35}{9}$, $\pm \frac{5}{9}\pm \mathfrak{j}\frac{25}{9}$, $\pm \frac{3}{9}\pm \mathfrak{j}\frac{15}{9}$, and $\pm \frac{1}{9}\pm \mathfrak{j}\frac{5}{9}$; and for $\mathcal{G}_{3}$ the eigenvalues are $\pm 2\pm \mathfrak{j}10$ and $\pm 1\pm \mathfrak{j}5$. For all three games the Nash equilibrium is the origin. The following table contains information about the parameter values as in Theorem \ref{thm:distMono} and \ref{thm:distQuad}. For game $\mathcal{G}_{2}$ the conditions of Lemma \ref{lemma:resInvLip} are not satisfied and hence the column is empty. The $\beta$ values are selected as $0.9 \beta_{min} + 0.1 \beta_{max}$ and the $\alpha$ values are selected as $0.5 \alpha_{min} + 0.5 \alpha_{max}$.

\begin{center}
\begin{tabular}{ |c|c c|c|c c| } \hline
 & \multicolumn{2}{|c|}{$\mathcal{G}_{1}$} & $\mathcal{G}_{2}$ & \multicolumn{2}{|c|}{$\mathcal{G}_{3}$} \\ \hline
 param. & Thm \ref{thm:distQuad} & Thm \ref{thm:distMono} & Thm \ref{thm:distQuad} & Thm \ref{thm:distQuad} & Thm \ref{thm:distMono} \\ \hline
 $\beta_{min}$ & 0.1 & 0.1 & 0.1 & 0.2 & 0.2\\ 
 $\beta_{max}$ & 2.6 & 2.6 & $\frac{13}{45}$ & 2.6 & 1.3 \\  
 $\beta$ & 0.35 & 0.35 &  $\frac{107}{900}$ & 0.44 & 0.31\\ \hline 
 $d$ &  & 0.5 & & & 0.5\\ \hline 
 $\alpha_{min}$ & 0 & 0 & 0 & 0 & 0\\
 $\alpha_{max}$ & 0.540 & 0.145 & 0.065 & 0.581 & 0.064\\  
 $\alpha$ & 0.270 & 0.072 & 0.032 & 0.290 & 0.032\\ \hline
 $c_{min}$ & 1517 & 1668 & $2.22\times 10^6$ & 7739 & 15057 \\ \hline
\end{tabular}
\end{center}

Figure \ref{fig:actions} shows the action trajectories for game $\mathcal{G}_{1}$ under \eqref{eqn:proposedAlg} for the parameters $\alpha, \beta, c$ satisfying Theorem \ref{thm:distQuad}, where the initial conditions $x(0)$, $r(0)$ are randomly selected with components between  $-10$ to $10$. Notice in $\mathcal{G}_{1}$ that $\beta$ used is the same for Theorem \ref{thm:distMono} and Theorem \ref{thm:distQuad}. However, the $\alpha$ obtained from Theorem \ref{thm:distMono} gives a conservative value for $\alpha$ and is an order of magnitude smaller than Theorem \ref{thm:distQuad}.

\begin{figure}[ht]
\centering
\begin{minipage}[ht]{1\columnwidth}
	\centerline{\includegraphics[width=7cm]{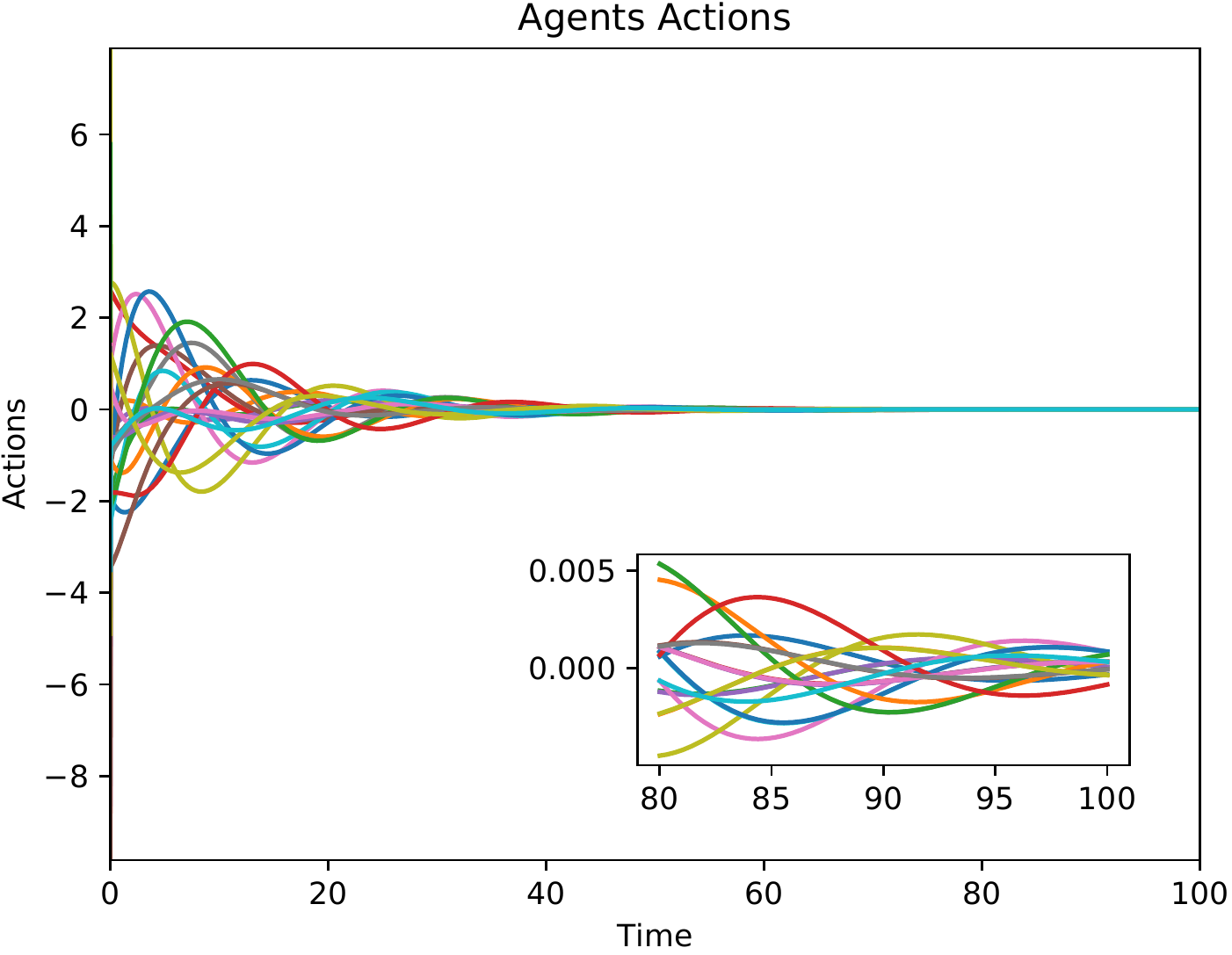}}
	\caption{Evolution of agents' actions }\label{fig:actions}
\end{minipage}
\end{figure}


The figures for the other examples are similar and are omitted.

\subsection{Nonquadratic Example}
The following example is a non quadratic game where Theorem \ref{thm:distQuad} no longer applies. Consider a hypomonotone game between $N=10$ agents communicating over a ring $G_c$ graph. The cost function for agent $i$ is $J_{i}(x) = w_{i}^{1} x_{i}^{T}\begin{bmatrix} 5 & 0 \\ 0 & 5 \end{bmatrix} x_{N+1-i} + w_{i}^{1}x_{i}^{T}\begin{bmatrix} \sin(x_{N+1-i,2}) \\ -\sin(x_{N+1-i,1})\end{bmatrix}$  where $x_{i,j}$ is the $j$th component of the vector $x_{i}$. For this game the pseudo-gradient is $1$-hypomonotone, $\frac{1}{4}$-inverse Lipschitz, and $6$-Lipschitz. 

Using Theorem \ref{thm:distMono}, $\beta_{min}=0.1$, $\beta_{max}=1.6$, and we selected $\beta = 0.9 \beta_{min} + 0.1 \beta_{max} = 0.25$. Using $d=0.5$ we obtain that $\alpha_{min} = 0$, $\alpha_{max} = 0.095$ and $\alpha = 0.5 \alpha_{min} + 0.5 \alpha_{max} = 0.0478$. Lastly, for a ring communication graph we obtain that $c_{min} = 3417$. Figures \ref{fig:actions_nonlin} shows the action trajectories and convergence to the NE. 

\begin{figure}[ht]
\centering
\begin{minipage}[ht]{1\columnwidth}
	\centerline{\includegraphics[width=7cm]{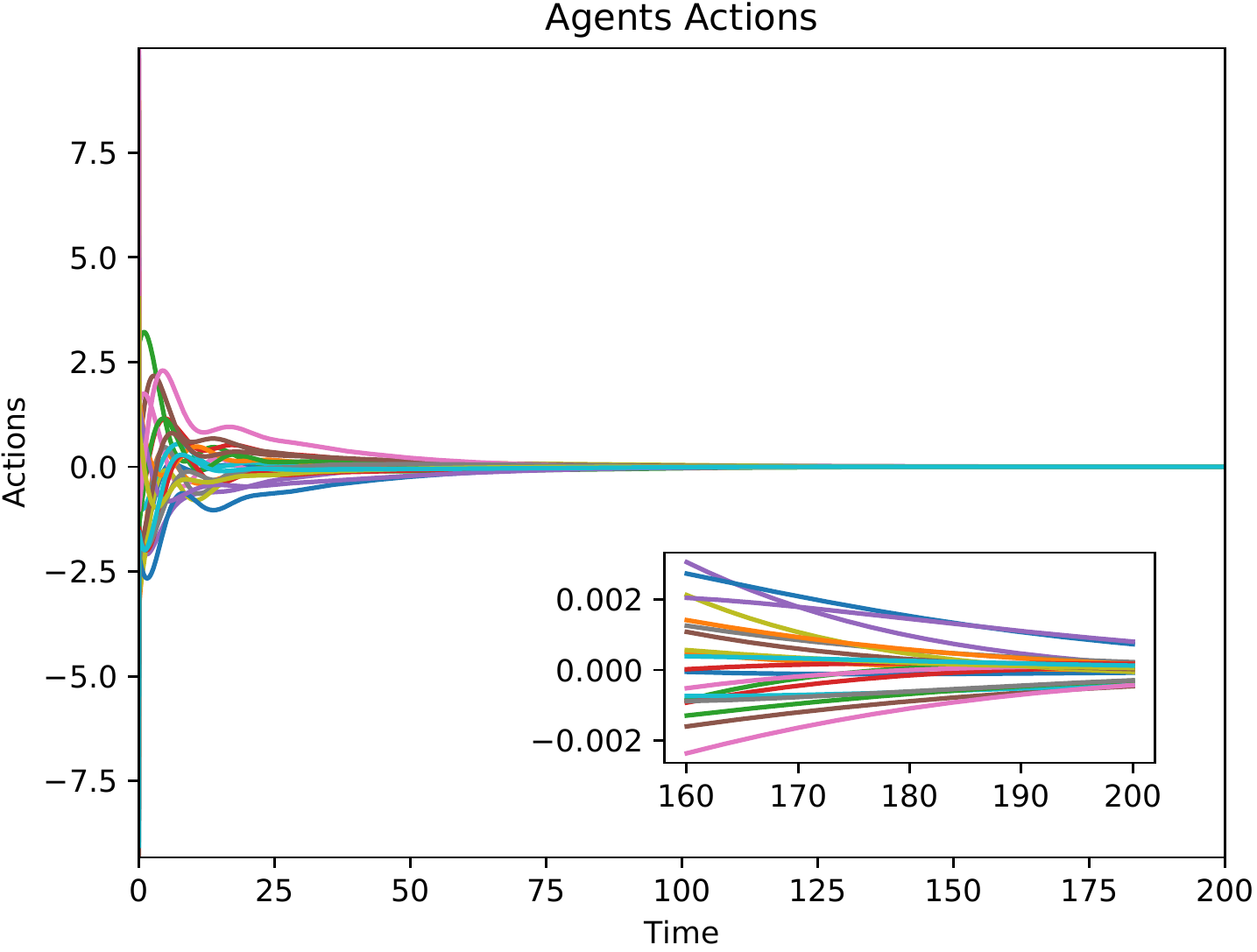}}
	\caption{Evolution of agents' actions}\label{fig:actions_nonlin}
\end{minipage}
\end{figure}


\section{Conclusion} \label{sec:conclusion}
In this paper, we considered monotone games and proposed a continuous-time dynamics constructed via passivity-based modification of a gradient-play scheme. We showed that in the full-decision information it converges to a Nash equilibrium in merely monotone games, for any positive parameter values. Under different assumptions we provided extensions to the partial-decision information case and extensions to hypomonotone games. Among future interesting problems we mention, extensions to directed communication graphs or, with adaptive gains, as well as to generalized Nash equilibrium problems.


\bibliographystyle{IEEEtran}
\bibliography{referencesCDC}

\section*{Appendix A} \label{sec:AppendixA}

\subsection*{Proof of Proposition \ref{lemma:resInvLip} }
(i): Notice that,
\begin{align*}
	\inp*{x-y}{(I+\lambda T)x - (I+\lambda T)y} &\geq \bracket{1-\lambda \mu}\snorm{x-y}
\end{align*}
by assumption $1> \lambda\mu$, therefore $(I+\lambda T)$ is strongly monotone. From Proposition 20.10 \cite{monoBookv2} the inverse of a monotone operator is monotone and therefore $\mathcal{J}_{\lambda T}$ is monotone. Additionally, since $(I+\lambda T)$ is strongly monotone, $\mathcal{J}_{\lambda T}$ is a single valued function.

(ii): Let $u = \mathcal{J}_{\lambda T}x$ and $v = \mathcal{J}_{\lambda T}y$, i.e., $x = (I+\lambda T)u$. Then,
\begin{align*}
	&\snorm{x-y} = \snorm{(I+\lambda T)u - (I+\lambda T)v} \\
	&\quad = \snorm{u-v} + \lambda^{2}\snorm{Tu-Tv} + 2\lambda\inp*{u-v}{Tu-Tv} \\
	&\quad \geq \bracket{1 + \frac{\lambda^{2}}{R^2}-2\lambda \mu}\snorm{u-v} \\
	&\quad = \bracket{\frac{R^2 + \lambda^{2}-2\lambda\mu R^{2}}{R^2}}\snorm{u-v}
\end{align*}
By assumption $\mu R < 1$ which implies that $R^2 + \lambda^{2}-2\lambda\mu R^{2} > R^2 + \lambda^{2}-2\lambda R = (\lambda - R)^{2} > 0$. If $\lambda = R$ then $R^2 + \lambda^{2}-2\lambda\mu R^{2} = 2R^{2}\bracket{1-\lambda \mu}$ and by assumption $\lambda \mu < 1$, therefore the numerator is always positive and
\begin{align*}
	\bracket{\frac{R^2}{R^2 + \lambda^{2} - 2\lambda \mu R^2}} \snorm{x-y} &\geq \snorm{\mathcal{J}_{\lambda T}x - \mathcal{J}_{\lambda T}y}
\end{align*} 

(iii): Assume that $R\geq \lambda$ and let $u = \mathcal{J}_{\lambda T}x$, $v = \mathcal{J}_{\lambda T}y$, and $c = \frac{R^{2}(1-\mu\lambda)}{R^{2} + \lambda^{2}-2\mu\lambda R^{2}}$. Then,
\begin{align}
	&c\snorm{x-y} = c\snorm{(I+\lambda T)u - (I+\lambda T)v} \notag \\
	&= c\snorm{u-v} + c\lambda^{2}\snorm{Tu - Tv} \notag + 2c\lambda\inp*{Tu - Tv}{u - v} \notag \\
	&\quad - \inp*{u-v}{(1+\lambda T)u-(1+\lambda T)v} \notag \\
	&\quad + \inp*{u-v}{(1+\lambda T)u-(1+\lambda T)v} \notag \\
	&= (c-1)\snorm{u-v} + c\lambda^{2}\snorm{Tu - Tv} \notag \\
	&\quad + \lambda (2c-1)\inp*{Tu - Tv}{u - v} \label{eqn:intStep} \\
	&\quad + \inp*{u-v}{(1+\lambda T)u-(1+\lambda T)v} \notag
\end{align}
Note that $2c-1 \geq 0$ for all $R\geq \lambda$ and $1> \mu\lambda$, so that
\begin{align*}
	&c\snorm{x-y} \geq c\bracket{\frac{R^2+\lambda^2-2\mu\lambda R^{2}}{R^2}}\snorm{u-v} \\
	&\quad - (1-\mu\lambda)\snorm{u-v} + \inp*{u-v}{(1+\lambda T)u-(1+\lambda T)v} \\
	&= \inp*{u-v}{(1+\lambda T)u-(1+\lambda T)v} \\
	&= \inp*{\mathcal{J}_{\lambda T}x-\mathcal{J}_{\lambda T}y}{x-y}
\end{align*}
Now assume that $\lambda \geq R$ and $c = \frac{R^{2}(1+ \frac{\lambda}{R})}{R^{2} + \lambda^{2} + 2\lambda R}$ then $2c-1 \leq 0$. Continuing from \eqref{eqn:intStep} and using the fact that $-\inp*{a}{b} \geq -\frac{1}{4s}\snorm{a}-s\snorm{b}$, yields
\begin{align*}
	&c\snorm{x-y} \geq (c-1)\snorm{u-v} + c\lambda^{2}\snorm{Tu - Tv} \\
	&\qquad + \lambda (2c-1)\bracket{\frac{R}{2}\snorm{Tu-Tv} + \frac{1}{2R}\snorm{u-v}} \\
	&\qquad + \inp*{u-v}{(1+\lambda T)u-(1+\lambda T)v} \\
	&\quad = c\bracket{\frac{R^2 + \lambda^2 + 2\lambda R}{R^2}}\snorm{u-v} -\bracket{1+\frac{\lambda}{R}}\snorm{u-v} \\
	&\qquad + \inp*{u-v}{(1+\lambda T)u-(1+\lambda T)v} \\
	&\quad = \inp*{\mathcal{J}_{\lambda T}x-\mathcal{J}_{\lambda T}y}{x-y}
\end{align*}

\subsection*{Proof of Proposition \ref{lemma:funcType} }
(i) From \cite{FP07} Prop 2.3.2 (c).
	
(ii) From \cite{FP07} Prop 2.9.25 (a).

(iii)
\begin{align*}
	\snorm{Tx-Ty} &= \snorm{\bracket{\int_{0}^{1}JT(x + t(y-x)) \partial t}(y-x)} \\
	&\leq \max_{z} \snorm{JT(z)}\snorm{x-y}\\
	&\leq L^{2} \snorm{x-y}
\end{align*}

(iv) Note that,
\begin{align*}
\snorm{Tx-Ty}
&= \snorm{\bracket{\int_{0}^{1}JT(x + t(y-x)) \partial t}(y-x)} \\
&\geq \min_{z} \snorm{\bracket{\int_{0}^{1}JT(z) \partial t}(y-x)} \\
&= \min_{z} (y-x)^{T}\bracket{JT^{T}(z)JT(z)}(y-x) \\
&\geq \frac{1}{R^{2}} \snorm{y-x}
\end{align*}

\subsection*{Proof of Lemma \ref{lemma:consTTS} }
First, note that   $\mathbf{F}(\mathbf{\bar{r}}^{\orthc}) =F(x^*)= \mathbf{0}_{n}$ for $\mathbf{\bar{r}}^{\orthc} = \mathbf{1}_{N}\otimes x^{*}$. Therefore, since  $\mathbf{h}$ \eqref{h_def} is the resolvent  of $\mathbf{1}_{N}\otimes \mathbf{F}$ on the consensus subspace,  and zeros of $\mathbf{1}_{N}\otimes \mathbf{F}$ are fixed points of the resolvent, (cf. Prop. 23.2, \cite{monoBookv2}), it follows that $\mathbf{\bar{r}}^{\orthc} = \mathbf{h}(\mathbf{\bar{r}}^{\orthc})$. Using this, along \eqref{eqn:consDynFI}, we can write
\begin{align}\label{r_term_0}
\hspace{-0.3cm}	\inp*{\mathbf{r}^{\orthc} - \mathbf{\bar{r}}^{\orthc}}{\mathbf{\dot{r}}^{\orthc}}
&	=  \alpha\inp*{\mathbf{r}^{\orthc} - \mathbf{\bar{r}}^{\orthc}}{\mathbf{z}^{\orthc}} - \alpha\snorm{\mathbf{r}^{\orthc} - \mathbf{\bar{r}}^{\orthc}} \\
&		+ \alpha\inp*{\mathbf{r}^{\orthc} - \mathbf{\bar{r}}^{\orthc}}{\mathbf{h}(\mathbf{r}^{\orthc}) - \mathbf{h}(\mathbf{\bar{r}}^{\orthc})}. \notag
\end{align}
To bound the last term we use Lemma \ref{lemma:resInvLip} as follows. For any $r\in  \reals^n$ let  $h(r):=\mathcal{J}_{\frac{1}{\beta N}{F}}(r)$ the resolvent of $F$. Using 
$F(h(r)) =\mathbf{F}(\mathbf{1}_{N}\otimes  h(r))$, we can write  $\mathbf{1}_{N}\otimes\big(\text{Id}+ \frac{1}{\beta N}F\big)h(r)= (\text{Id} + \frac{1}{\beta N}\mathbf{1}_{N}\otimes\mathbf{F}  )(\mathbf{1}_{N}\otimes  h(r))$. Using $\big (\text{Id}+ \frac{1}{\beta N}F \big)h(r)=r$ and  \eqref{h_def}, this is equivalent to  $\mathbf{h}(\mathbf{1}_{N}\otimes r)  
=  \mathbf{1}_{N}\otimes h(r)$. As   $h$  is the resolvent  of $F$,  under Assumption  \ref{asmp:monotone} and \ref{asmp:inverse}, we apply Lemma \ref{lemma:resInvLip} to $F$ with $\lambda = \frac{1}{\beta N}$. Therefore, since for any  $\mathbf{r}^{\orthc} = \mathbf{1}_{N}\otimes r$,  $\mathbf{h}(\mathbf{r}^{\orthc}) =\mathbf{1}_{N}\otimes h(r)$ the bounds from Lemma \ref{lemma:resInvLip} (ii) and (iii) hold, and it follows that the same bounds hold for $\mathbf{h}$. Using Lemma \ref{lemma:resInvLip} (iii) in the last term of \eqref{r_term_0} yields,  
\begin{align}\label{r_term}
\begin{split}
\inp*{\mathbf{r}^{\orthc} - \mathbf{\bar{r}}^{\orthc}}{\mathbf{\dot{r}}^{\orthc}} &\leq \alpha \norm{\mathbf{r}^{\orthc} - \mathbf{\bar{r}}^{\orthc}}\norm{\mathbf{z}^{\orthc}} \\
	&\qquad - \alpha \bracket{1-{\kappa_{\mathcal{J}}}}  \snorm{\mathbf{r}^{\orthc} - \mathbf{\bar{r}}^{\orthc}}
\end{split}
\end{align}
Similarly, using \eqref{eqn:consDynFI}, we can write, 
\begin{align*}
\inp*{\mathbf{z}^{\orthc}}{\mathbf{\dot{z}}^{\orthc}}  &= -\frac{1}{N}\inp*{\mathbf{z}^{\orthc}}{\mathbf{1}_N\otimes\mathbf{F}(\mathbf{z}^{\orthc} + \mathbf{h}(\mathbf{r}^{\orthc}))} \\
	&\qquad - \beta \inp*{\mathbf{z}^{\orthc}}{\mathbf{z}^{\orthc} + \mathbf{h}(\mathbf{r}^{\orthc}) - \mathbf{r}^{\orthc}} \\
	&\qquad - \alpha\inp*{\mathbf{z}^{\orthc}}{\left[\frac{\partial \mathbf{h}}{\partial \mathbf{r}}\right](\mathbf{z}^{\orthc} + \mathbf{h}(\mathbf{r}^{\orthc}) - \mathbf{r}^{\orthc})}
\end{align*}

Substituting $\mathbf{r}^{\orthc} = (\text{Id} + \mathbf{1}_{N}\otimes \frac{1}{\beta N}\mathbf{F})\mathbf{h}(\mathbf{r}^{\orthc})$ (cf. \eqref{h_def}) in the middle term and combining terms yields, 
\begin{align*}
	&\inp*{\mathbf{z}^{\orthc}}{\mathbf{\dot{z}}^{\orthc}} \\
& = -\frac{1}{N}\inp*{\mathbf{z}^{\orthc}}{\mathbf{1}_{N}\otimes\mathbf{F}(\mathbf{z}^{\orthc} + \mathbf{h}(\mathbf{r}^{\orthc}))  - \mathbf{1}_{N}\otimes\mathbf{F}(\mathbf{h}(\mathbf{r}^{\orthc}))} \\
	&\qquad - \beta\snorm{\mathbf{z}^{\orthc}} - \alpha \inp*{\mathbf{z}^{\orthc}}{\left[\frac{\partial \mathbf{h}}{\partial \mathbf{r}}\right](\mathbf{z}^{\orthc} + \mathbf{h}(\mathbf{r}^{\orthc}) - \mathbf{r}^{\orthc})}
\end{align*}
The first term is non-negative since $\mathbf{z}^{\orthc}$, $\mathbf{z}^{\orthc} + \mathbf{h}(\mathbf{r}^{\orthc})$ and $\mathbf{h}(\mathbf{r}^{\orthc})$ are on the consensus subspace and $\mathbf{1}_{N}\otimes \mathbf{F}$ evaluates to just $\mathbf{1}_{N}\otimes F$, which is $\mu$-hypomonotone by Assumption \ref{asmp:inverse}. Adding and subtracting  $\mathbf{\bar{r}}^{\orthc} = \mathbf{h}(\mathbf{\bar{r}}^{\orthc})$ in the last term, we can then write 
\begin{align*}
\inp*{\mathbf{z}^{\orthc}}{\mathbf{\dot{z}}^{\orthc}} &\leq -\bracket{\beta - \frac{\mu}{N}}\snorm{\mathbf{z}^{\orthc}} - \alpha\inp*{\mathbf{z}^{\orthc}}{\left[\frac{\partial \mathbf{h}}{\partial \mathbf{r}}\right ]\mathbf{z}^{\orthc}} 
\\
	&\quad + \alpha \norm{\mathbf{z}^{\orthc}}\norm{\frac{\partial \mathbf{h}}{\partial \mathbf{r}}}\norm{\mathbf{h}(\mathbf{r}^{\orthc}) - \mathbf{h}(\mathbf{\bar{r}}^{\orthc})} \\
	&\quad + \alpha\norm{\mathbf{z}^{\orthc}}\norm{\frac{\partial \mathbf{h}}{\partial \mathbf{r}}}\norm{\mathbf{r}^{\orthc} - \mathbf{\bar{r}}^{\orthc}}.
\end{align*}
The second term is non-positive since  $\mathbf{h}$ is monotone by Lemma \ref{lemma:resInvLip} (i) and  $\frac{\partial \mathbf{h}}{\partial \mathbf{r}}$ is positive semidefinite (cf. Proposition 2.3.2 \cite{FP07}). Using   $\norm{\mathbf{h}(\mathbf{r}^{\orthc}) - \mathbf{h}(\mathbf{\bar{r}}^{\orthc})} \leq L_{\mathcal{J}} \norm{\mathbf{r}^{\orthc} - \mathbf{\bar{r}}^{\orthc}}$ and $\norm{\frac{\partial \mathbf{h}}{\partial \mathbf{r}}} \leq L_{\mathcal{J}}$ from Lemma \ref{lemma:resInvLip}(ii), yields, 
\begin{align}
\begin{split}
\inp*{\mathbf{z}^{\orthc}}{\mathbf{\dot{z}}^{\orthc}} &\leq  - \bracket{\beta-\frac{\mu}{N}}\snorm{\mathbf{z}^{\orthc}} \\
	&\quad + \alpha\bracket{L_{\mathcal{J}} + L_{\mathcal{J}}^{2}}\norm{\mathbf{z}^{\orthc}}\norm{\mathbf{r}^{\orthc} - \mathbf{\bar{r}}^{\orthc}}
\end{split}\label{z_term}
\end{align}
Finally,  for $V^{\orthc}$ as in the lemma, using the bounds in \eqref{r_term}, \eqref{z_term},  along the solution of \eqref{eqn:consDynFI},  we can write $\dot{V}^{\orthc}(\mathbf{z}^{\orthc}, \mathbf{r}^{\orthc}) \leq -\omega^{T}\Phi \omega$, where   $\Phi $ is as in \eqref{eqn:lyapNSD} and $\omega = \bracket{\norm{\mathbf{r}^{\orthc} - \mathbf{\bar{r}}^{\orthc}}, \norm{\mathbf{z}^{\orthc}}}$. It can be easily seen  that for any given $d\in(0,1)$ and $\alpha$ as in the lemma, $\Phi$ is positive definite.

\subsection*{Proof of Lemma \ref{lemma:full_hypo} }
Note that if we start with \eqref{eqn:proposedAlg} and do the change of coordinates $z = x-\mathcal{J}_{\frac{1}{\beta}F}r$ we get \eqref{eqn:consDynFI} but with $\mathbf{r}^{\orthc}$ replaced with $r$, $\mathbf{x}^{\orthc}$ replaced  with $x$, $\mathbf{z}^{\orthc}$ with $z = x-\mathcal{J}_{\frac{1}{\beta}F}r$, and $\frac{1}{N}\mathbf{1}_{N}\otimes \mathbf{F}$ replaced  with $F$. Therefore, following the same argument as Lemma \ref{lemma:consTTS} we can construct a Lyapunov function that shows that $x^{*}$ is asymptotically stable.

\subsection*{Proof of Lemma \ref{lemma:eigRelation} }
Let $v_{i} = (x_{i},y_{i})$ be the $i^{th}$ eigenvector of $M$ \eqref{eqn:DynLT} then,
\begin{align*}
	\begin{bmatrix}
		-A-\beta I & \beta I \\
		\alpha I & -\alpha I
	\end{bmatrix}\begin{bmatrix}
	x_{i} \\
	y_{i}
	\end{bmatrix} &= \lambda_{i} \begin{bmatrix}
	x_{i} \\
	y_{i}
	\end{bmatrix}
\end{align*}
The second row implies that $x_{i} = \frac{\alpha + \lambda_{i}}{\alpha}y_{i}$. Substituting this into the first row, yields
\begin{align*}
	Ay_{i} &= -\frac{\lambda_{i}(\alpha + \beta + \lambda_{i})}{\alpha + \lambda_{i}}y_{i}
\end{align*}
This equation can only hold true if $y_{i}$ is an eigenvector for $A$. With $\rho_{i}$ is the corresponding eigenvalue. Therefore,
\begin{align*}
	\rho_{i} &= -\frac{\lambda_{i}(\alpha + \beta + \lambda_{i})}{\alpha + \lambda_{i}}
\end{align*}
and solving for the roots of the quadratic in $\lambda_{i}$ gives \eqref{eqn:eigMap}.

\subsection*{Proof of Lemma \ref{thm:stableA}}
(i) From Lemma \ref{lemma:eigRelation} we see that the characteristic polynomial is $\mathcal{C}_{i} \defeq \lambda_{i}^{2} + (\alpha + \beta + \rho_{i})\lambda_{i} + \rho_{i}$ and when $\rho_{i} = 0$ we immediately get our result.

(ii) We need to show that the real part of the roots of $\mathcal{C}_{i}$ must be less than 0. From \cite{GenRouth} we know that the roots of a complex coefficient polynomial are in the left half plane if the roots of, 
\begin{align*}
	\mathcal{C}_{i}^{*}\mathcal{C}_{i} &= \lambda_{i}^{4} + 2(\alpha + \beta + r_{i})\lambda_{i}^{3} \\
	&\quad + \bracket{(\alpha+\beta +r_{i})^{2} + k_{i}^{2} + 2\alpha r_{i}}\lambda_{i}^{2} \\
	&\quad + 2\bracket{r_{i}(\alpha+\beta+r_{i})+k_{i}^2}\lambda_{i} + \alpha^{2}\bracket{r_{i}^{2}+k_{i}^{2}}
\end{align*}
are in the left half plane. The Routh array for $\mathcal{C}_{i}^{*}\mathcal{C}_{i}$ is,
\begin{equation*}
\begin{array}{c|c|c}
  1 & \bracket{(\alpha+\beta +r_{i})^{2} + k_{i}^{2} + 2\alpha r_{i}} & \alpha^{2}\bracket{r_{i}^{2}+k_{i}^{2}} \\ 
  \hline
  2(\alpha + \beta + r_{i}) & 2\bracket{\alpha r_{i}(\alpha+\beta+r_{i})+\alpha k_{i}^2} & 0\\
  \hline 
  T_1 & \alpha^{2}\bracket{r_{i}^{2}+k_{i}^{2}} & 0 \\
  \hline
  T_2 & 0 & 0 \\
  \hline 
  \alpha^{2}\bracket{r_{i}^{2}+k_{i}^{2}} & 0 & 0
 \end{array}
\end{equation*}
where
\begin{align*}
T_1 &= \left[ \underbrace{(\alpha +\beta + r_{i})^2}_{>0} + \underbrace{\alpha r_{i}}_{\geq 0} + \underbrace{\frac{\beta + r_{i}}{\alpha+\beta + r_{i}}}_{>0}\underbrace{k_{i}^2}_{\geq 0}\right] > 0 \\
T_2 &= \frac{2\alpha}{T_1}\left[ \underbrace{r_{i}(\alpha+\beta + r_{i})^3}_{\geq 0} + \underbrace{\frac{\beta + r_{i}}{\alpha + \beta + r_{i}}k_{i}^4}_{\geq 0} \right. \\
	&\qquad \qquad \qquad \left. + \underbrace{(\alpha+\beta+r_{i})}_{\geq 0}\underbrace{(\beta+2r_{i})}_{\geq 0} \underbrace{k_{i}^2}_{\geq 0} \right] > 0
\end{align*}

If we show that all elements in the left column in the Routh array are all positive then the roots of $\mathcal{C}_{i}$ are less than $0$. The term $2(\alpha + \beta + r_{i})$, $\alpha^{2}\bracket{r_{i}^2+k_{i}^2}$, and $T_1$ are positive. Either $r_{i}\neq 0$ or $k_{i}\neq 0$, therefore one of the terms in $T_{2}$ will be strictly positive making $T_{2} > 0$. Therefore, $\lambda_{i}$ has real part less than $0$.

(iii) The term $\alpha^{2}\bracket{r_{i}^2+k_{i}^2}$ is always positive. By assumption, $\alpha > 0$ and $\beta + r_{i} > 0$, therefore the term $2(\alpha+\beta+r_{i})$ is positive. For the $T_{1}$ term, let $\epsilon = \beta + r_{i}>0$ then,
\begin{align*}
	T_1 & = \left[(\epsilon+\alpha)^2 + \alpha r_{i} + \frac{\epsilon}{\epsilon+\alpha}k_{i}^2\right]
\end{align*}
Multiplying $T_{1}$ by $\epsilon + a > 0$ gives the condition,
\begin{align*}
	0 &< \left[(\epsilon+\alpha)^3 + \alpha(\epsilon+\alpha)r_{i} + \epsilon k_{i}^2\right] \\
	0 &< (\epsilon+\alpha)^3 + [\alpha + \epsilon - \epsilon](\epsilon+\alpha)r_{i} + \epsilon k_{i}^2 \\
	0 &< (\epsilon+\alpha)^3 - \epsilon (\epsilon + \alpha)r_{i} + \epsilon k_{i}^2  + (\epsilon + \alpha)^2r_{i}
\end{align*}
From the upper bound assumption on $\alpha$ we see that $\alpha$ satisfies, $\alpha < -\epsilon + \sqrt{\frac{-\epsilon k_{i}^2}{r_{i}}} \implies \epsilon k_{i}^2 + (\alpha+\epsilon)^2r_{i} > 0$. Therefore the condition is always satisfied. Note that as $\alpha \to 0$ that the condition becomes the assumption for the upper bound of $\beta$. For the $T_{2}$ term,
\begin{align*}
	T_2 &= \frac{2\alpha}{T_1}\bracket{ r_{i}(\alpha+\epsilon)^3 + \frac{\epsilon}{\alpha + \epsilon}k_{i}^4 + (\alpha+\epsilon)(\epsilon +r_{i}) k_{i}^2 } \\
	&= \frac{2\alpha}{(\epsilon+\alpha)T_1}\bracket{r_{i}(\epsilon+\alpha)^4 + \epsilon k_{i}^4 + (\alpha+\epsilon)^2(\epsilon + r_{i}) k_{i}^2 }
\end{align*}
Since $\frac{2}{(\epsilon+\alpha)T_1} > 0$ the condition for $T_{2}>0$ is,
\begin{equation*}
	r_{i}x^2 + (\epsilon +r_{i}) k_{i}^2x + \epsilon k_{i}^4 > 0
\end{equation*}
where $x = (\epsilon + \alpha)^2$. The roots of this equation are,
\begin{align*}
	x &= \frac{1}{2r_{i}}[-(\epsilon + r_{i})k_{i}^2 \pm \sqrt{(\epsilon +r_{i})^2k_{i}^4 - 4\epsilon r_{i}k_{i}^4 } \\
	&= \frac{1}{2r_{i}}[-(\epsilon+r_{i})k_{i}^2 \pm (\epsilon - r_{i})k_{i}^2 ] = \frac{-\epsilon k_{i}^2}{r_{i}} \text{ or }  -k_{i}^2
\end{align*}
Therefore, $x\in (-k_{i}^{2},\frac{-\epsilon k_{i}^2}{r_{i}})$ for $T_{2}>0$, but $x=(\epsilon+\alpha)^{2}$ so $x = (\epsilon+\alpha)^{2}\in (0,\frac{-\epsilon k_{i}^2}{r_{i}})$ which implies,
\begin{align*}
	0 < \alpha <  -\bracket{\beta+r_{i}} + \sqrt{\frac{-(\beta+r_{i})k_{i}^2}{r_{i}}}
\end{align*}

\subsection*{Proof of Theorem \ref{thm:distQuad} }
After performing a change of coordinates as in \eqref{eqn:DynLT} and a decomposition as in the nonlinear case, the dynamics \eqref{eqn:distProposedAlg} can be written as,
\begin{align}
	\mathbf{\dot{x}^{\orthc}} &= \Pi_{\orthc}\bracket{\mathcal{R}^{T}\mathbf{A}\mathbf{x} - \beta(\mathbf{x} - \mathbf{r}) - c\mathbf{L}\mathbf{x}} \label{eqn:x_par}\\
	&= \Pi_{\orthc}\mathcal{R}^{T}\mathbf{A}(\mathbf{x}^{\orthc} + \mathbf{x}^{\orth}) - \beta(\mathbf{x}^{\orthc} - \mathbf{r}^{\orthc}) \notag \\
	\mathbf{\dot{x}^{\orth}} &= (I-\Pi_{\orthc})\bracket{\mathcal{R}^{T}\mathbf{A}\mathbf{x} - \beta(\mathbf{x} - \mathbf{r}) - c\mathbf{L}\mathbf{x}} \notag \\
	&= (I-\Pi_{\orthc})\mathcal{R}^{T}\mathbf{A}(\mathbf{x}^{\orthc} + \mathbf{x}^{\orth}) - \beta(\mathbf{x}^{\orth} - \mathbf{r}^{\orth}) - c\mathbf{L}\mathbf{x}^{\orth} \notag \\
	\mathbf{\dot{r}}^{\orthc} &= \alpha (\mathbf{x}^{\orthc} - \mathbf{r}^{\orthc}) \label{eqn:r_par}\\
	\mathbf{\dot{r}}^{\orth} &= \alpha (\mathbf{x}^{\orth} - \mathbf{r}^{\orth}) \notag
\end{align}
Let $\mathbf{w} = \mathbf{w}^{\orthc} + \mathbf{w}^{\orth}$, $\mathbf{w}^{\orthc} = (\mathbf{x}^{\orthc},\mathbf{r}^{\orthc})$, $\mathbf{w}^{\orth} = (\mathbf{x}^{\orth},\mathbf{r}^{\orth})$, and $(\mathbf{w}^{i})^{\orthc} = ((\mathbf{x}^{i})^{\orthc}, (\mathbf{r}^{i})^{\orthc})$. The matrix $\tilde{M}$ has the same structure as $M$ (some terms scaled). From Lemma \ref{thm:stableA} we know that $\tilde{M}$, for the $\alpha$ and $\beta$ satisfying the assumptions in the theorem, has all its eigenvalues  with real part less than $0$ and therefore there exists a $P$ satisfying the Lyapunov equation. Consider the following Lyapunov function, 
\begin{align}
V(\mathbf{w})\! &= \frac{1}{2}\snorm{\mathbf{x}^{\orth}} \! + \! \frac{\beta}{2\alpha}\snorm{\mathbf{r}^{\orth}} \! + \! \sum_{i\in\mathcal{N}} \snorm{(\mathbf{w}^{i})^{\orthc}-w^{*}}_{P} \label{eqn:cand_lyap}
\end{align}
For the first two terms in \eqref{eqn:cand_lyap},
\begin{align*}
&\frac{d}{d t}\bracket{\frac{1}{2}\snorm{\mathbf{x}^{\orth}} + \frac{\beta}{2\alpha}\snorm{\mathbf{r}^{\orth}}} \\
	& = (\mathbf{x}^{\orth})^{T}\bracket{(I-\Pi_{\orthc})\mathcal{R}^{T}\mathbf{A}(\mathbf{x}^{\orthc}+\mathbf{x}^{\orth}) - cL\mathbf{x}^{\orth}} \\
	&\qquad - (\mathbf{w}^{\orth})^{T}\begin{bmatrix}
		\beta I & -\beta I \\
		-\beta I & \beta I
	\end{bmatrix}\mathbf{w}^{\orth}
\end{align*}
Since the last term is equal to $-\beta(\mathbf{x}^{\orth} - \mathbf{r}^{\orth})^{2} \leq 0$, therefore
\begin{align*}
&\frac{d}{d t}\bracket{\frac{1}{2}\snorm{\mathbf{x}^{\orth}} + \frac{\beta}{2\alpha}\snorm{\mathbf{r}^{\orth}}} \\
&\leq (\mathbf{x}^{\orth})^{T}(I-\Pi_{\orthc})\mathcal{R}^{T}\mathbf{A}(\mathbf{x}^{\orthc}+\mathbf{x}^{\orth}) - c\lambda_{2}(L)\snorm{\mathbf{x}^{\orth}}
\end{align*}
Using $\norm{(I-\Pi_{\orthc})\mathcal{R}^{T}\mathbf{A}}= \sqrt{\frac{N-1}{N}}\norm{\mathbf{A}}$, $\norm{\mathbf{A}} \leq L_{\mathbf{A}}$ and $\mathcal{R}^{T}\mathbf{A}\mathbf{x}^{*} = 0$ yields,
\begin{align}
&\frac{d}{d t}\bracket{\frac{1}{2}\snorm{\mathbf{x}^{\orth}} + \frac{\beta}{2\alpha}\snorm{\mathbf{r}^{\orth}}} \notag \\
&\leq \sqrt{\frac{N-1}{N}}L_{\mathbf{A}}\bracket{ \snorm{\mathbf{x}^{\orth}} + \norm{\mathbf{x}^{\orth}}\norm{\mathbf{x}^{\orthc}-x^{*}}} \notag \\
&\qquad - c\lambda_{2}(L)\snorm{\mathbf{x}^{\orth}} \notag \\
	&\leq \sqrt{\frac{N-1}{N}}L_{\mathbf{A}}\bracket{ \snorm{\mathbf{x}^{\orth}} + \norm{\mathbf{x}^{\orth}}\norm{\mathbf{w}^{\orthc}-w^{*}}} \label{eqn:lyp_bound_1} \\
&\qquad - c\lambda_{2}(L)\snorm{\mathbf{x}^{\orth}} \notag
\end{align}

Note that $\mathbf{w}^{\orthc}$ can be written as $\mathbf{w}^{\orthc} = 1\otimes w^{\orthc}$ and $(\mathbf{w}^{i})^{\orthc}=(\mathbf{w}^{j})^{\orthc} = w^{\orthc}$. For the third term in \eqref{eqn:cand_lyap}, along the solution of \eqref{eqn:x_par} and \eqref{eqn:r_par},
\begin{align*}
	&\frac{d}{d t} \sum_{i\in\mathcal{N}} \snorm{(\mathbf{w}^{i})^{\orthc}-w^{*}}_{P} \\
	&\quad = \sum_{i\in\mathcal{N}} ((\mathbf{w}^{i})^{\orthc}-w^{*})^{T}(P\tilde{M}+\tilde{M}^{T}P)((\mathbf{w}^{i})^{\orthc}-w^{*}) \\
	&\qquad + ((\mathbf{w}^{i})^{\orthc}-w^{*})^{T}(P+P^{T})Q\mathbf{w}^{\orth}
\end{align*}
where $Q = \begin{bmatrix}
	\frac{-1}{N}\mathbf{A} & 0 \\
	0 & 0
	\end{bmatrix}$.
\begin{align*}
	&\quad = -\snorm{\mathbf{w}^{\orthc}-\mathbf{w}^{*}} + \sum_{i\in\mathcal{N}}((\mathbf{w}^{i})^{\orthc}-w^{*})^{T}(P+P^{T})Q\mathbf{w}^{\orth} \\
	&\quad = -\snorm{\mathbf{w}^{\orthc}-\mathbf{w}^{*}} + (\mathbf{w}^{\orthc}-w^{*})^{T}[I\otimes (P+P^{T})][\mathbf{1}\otimes Q]\mathbf{w}^{\orth}
\end{align*}
Using $\norm{P} = p$ and $\norm{\frac{1}{N}\mathbf{1}\otimes \mathbf{A}} \leq \frac{1}{\sqrt{N}}L_{\mathbf{A}}$,
\begin{align}
\begin{split}
	&\frac{d}{d t} \sum_{i\in\mathcal{N}} \snorm{(\mathbf{w}^{i})^{\orthc}-w^{*}}_{V} \\
	&\quad \leq -\snorm{\mathbf{w}^{\orthc}-\mathbf{w}^{*}}  + \frac{2p L_{\mathbf{A}}}{\sqrt{N}}\norm{\mathbf{w}^{\orthc}-\mathbf{w}^{*}}\norm{\mathbf{x}^{\orth}}
\end{split} \label{eqn:lyp_bound_2}
\end{align}

Therefore, from \eqref{eqn:lyp_bound_1} and \eqref{eqn:lyp_bound_2} the Lyapunov function $V$, \eqref{eqn:cand_lyap}, satisfies 
\begin{align*}
	\frac{d}{d t}V(\mathbf{w}) &\leq 						\varpi^{T}\begin{bmatrix}
	-1 & \frac{L_{\mathbf{A}}}{2\sqrt{N}}\bracket{2p + \sqrt{N-1}} \\
	* & -c\lambda_{2}(L) + \sqrt{\frac{N-1}{N}}L_{\mathbf{A}}
	\end{bmatrix} \varpi
\end{align*}
where $\varpi = col(\norm{\mathbf{w}^{\orthc}-\mathbf{w}^{*}},\norm{\mathbf{x}^{\orth}})$. Under \eqref{eqn:GraphCondition} the matrix is negative definite and using LaSalle's Invariance Principle \cite{nonlinear} concludes the proof.

\end{document}